\newcommand{\gap}{\,\,\,\,\,\,}
\newcommand{\calA}{\mathcal{A}}
\newcommand{\calB}{\mathcal{B}}
\newcommand{\calC}{\mathcal{C}}
\newcommand{\calG}{\mathcal{G}}
\newcommand{\calH}{\mathcal{H}}
\newcommand{\calK}{\mathcal{K}}
\newcommand{\calP}{\mathcal{P}}
\newcommand{\HH}{\mathbb{H}}
\newcommand{\NN}{\mathbb{N}}
\newcommand{\RR}{\mathbb{R}}
\newcommand{\ZZ}{\mathbb{Z}}
\newcommand{\bfb}{\mathbf{B}}
\newtheorem{theorem}{Theorem}[section]
\newtheorem{proposition}[theorem]{Proposition}
\newtheorem{corollary}[theorem]{Corollary}
\newtheorem{lemma}[theorem]{Lemma}
\newtheorem{introthm}{Theorem}
\theoremstyle{definition}
\newtheorem{definition}[theorem]{Definition}
\newtheorem*{claim*}{Claim}
\newtheorem*{question*}{Question}
\newtheorem*{answer*}{Answer}
\newtheorem*{application*}{Application}
\theoremstyle{remark}
\newtheorem{remark}[theorem]{Remark}
\newtheorem*{remark*}{Remark}
\newcommand{\cat}{\operatorname{CAT}}
\newcommand{\diam}{\operatorname{diam}}
\newcommand{\Ham}{Ham\-en\-st\"adt\ }
\newcommand{\barchi}{\bar{\chi}}
\newcommand{\barphi}{\bar{\phi}}
\newcommand{\param}{{\mathchoice{\mkern1mu\mbox{\raise2.2pt\hbox{$
\centerdot$}}
\mkern1mu}{\mkern1mu\mbox{\raise2.2pt\hbox{$\centerdot$}}\mkern1mu}{
\mkern1.5mu\centerdot\mkern1.5mu}{\mkern1.5mu\centerdot\mkern1.5mu}}}
\DeclarePairedDelimiter\norm{\lVert}{\rVert}
\renewcommand{\setminus}{{\smallsetminus}}
\newcommand{\from}{\colon\thinspace}
\newcommand{\id}{\operatorname{id}}
\newcommand{\gamhat}{\hat{\gamma}}
\newcommand{\lbdry}{\partial_\infty L}
\newcommand{\ham}{\text{Ham}}
\newcommand{\barx}{{\bar{x}}}
\newcommand{\bary}{{\bar{y}}}
\newcommand{\barh}{\bar{h}}
\newcommand{\bdry}{\partial_\infty \mtilde}
\newcommand{\fix}{\operatorname{Fix}}
\newcommand{\clowval}{\frac{n-1}{n-2}}
\newcommand{\dhatuu}{\hat{d}^{\text{uu}}}
\newcommand{\diamhat}{\widehat{\text{diam}}}
\newcommand{\suu}{S^{\text{uu}}}  
\newcommand{\wuu}{W^{\text{uu}}}
\newcommand{\buu}{\text{B}^{\text{uu}}}
\newcommand{\duu}{d^{\text{uu}}}
\newcommand{\muu}{\mu^{\text{uu}}}
\newcommand{\mubuu}{\mu_B^{\text{uu}}}
\newcommand{\sss}{S^{\text{ss}}}
\newcommand{\kpahat}{\hat{\kappa}}
\newcommand{\bfzeta}{\boldsymbol{\zeta}}
\newcommand{\bfeta}{\boldsymbol{\eta}}
\newcommand{\conj}{\operatorname{Conj}}
\newcommand{\clow}{{\mathfrak{b}}}
\newcommand{\test}[3]{\phi_{#1}^{#2} (#3)}
\newcommand{\bartest}[3]{\barphi_{#1}^{#2} (#3)}
\newcommand{\pdf}{p_D^F}
\newcommand{\cte}{\frac{\sigma(F_1) \sigma(F_2)}{\delta}}
\newcommand{\mtilde}{\widetilde{M}}
\newcommand{\lip}[1]{$#1$--Lipschitz}
\newcommand{\hold}[1]{$#1$--H\"{o}lder}
\newcommand{\mBM}{\widetilde{m}_{\text{BM}}}
\newcommand{\mbarBM}{m_{\text{BM}}}
\newcommand{\wss}[1]{W^{\text{ss}} (#1)  }
\newcommand{\ws}{W^{\text{s}} }
\newcommand{\muss}{\mu^{\text{ss}} }
\newcommand{\mus}{\mu^{\text{s}} }
\newcommand{\mub}[1]{\mu^{\text{ss}}_\text{B} \left( #1 \right) }
\newcommand{\bss}[1]{\text{B}^{\text{ss}}  (#1)  }
\newcommand{\thickbs}[2]{\mathbf{B}^{\text{s}}_{#1} \left( #2 \right) }
\newcommand{\pss}{p^\text{ss}}
\newcommand{\dss}{d^{\text{ss}} }
\newcommand{\tauss}{\tau^\text{ss}_F}
\newcommand{\rss}{r^\text{ss}_F}
\newcommand{\gmacycle}{ \langle \gamma \rangle }
\newcommand{\gamhatcycl}{ \langle \gamhat \rangle }
\newcommand{\bfO}{\mathbf{O}}
\newcommand{\holder}[1]{H\"{o}lder}
\newcommand{\gmcycl}{\langle \gamma \rangle }
\newcommand{\stab}[1]{\operatorname{Stab}(#1) }
\newcommand{\sprt}{\text{Supp}}
\newcounter{counterc}
\newcounter{counteralpha}
\newcommand{\nextalfa}[1]{\refstepcounter{counteralpha} \label{#1}}
\newcommand{\alfa}[1]{\ensuremath{  {\alpha_{#1 }} } }
\newcounter{counterkpa}
\newcommand{\nextkpa}[1]{\refstepcounter{counterkpa} \label{#1} }
\newcommand{\kpa}[1]{\ensuremath{  {\kappa_{#1 }} } }
\newcounter{counterconst}
\newcommand{\nextconst}[1]{\refstepcounter{counterconst} \label{#1} }
\newcommand{\const}[1]{{\ensuremath{c_{#1 } }} }
\newcommand{\rtaupm}[1]{
    \ensuremath{ 
        r #1 \const{\ref{thick-const}} \epsilon^\alfa{\ref{thick-alpha}}, 
        \tau #1 \const{\ref{thick-const}} \epsilon^\alfa{\ref{thick-alpha}}
    }
}
\newcommand{\chilowerdelt}{
    \const{\ref{chiplus const}} \frac{\epsilon^\alfa{\ref{chiplus-alpha}}}{\tau}
}
\title{The growth of a fixed conjugacy class in negative curvature}
\author{Pouya Honaryar}
\begin{document}

\maketitle

\begin{abstract}
    Let $M$ be a compact closed manifold of variable negative curvature. Fix an element $\id \neq \gamma$ in the fundamental group $\Gamma$ of $M$, and denote the set of elements in $\Gamma$ that are conjugate to $\gamma$ by $\conj_\gamma$. For two points $x, y$ in the universal cover of $M$, we obtain asymptotics for the number of $\conj_\gamma$--orbits of $y$ that lie in a ball of radius $T$ centered at $x$, as $T$ tends to infinity. If $M$ is two-dimensional, or of dimension $n \geq 3$ and curvature bounded above by $-1$ and below by $-(\clowval)^2$, we find an exponentially small error term for this count.
\end{abstract}



\section{Introduction} \label{intro}

{\bf Statement of the main result.}
Let $M$ be a closed compact manifold of (variable) negative curvature, and let $\mtilde$ be its universal cover. The fundamental group of $M$, denoted by $\Gamma$, acts on $\mtilde$ by deck transformations. We denote the image of a point $x \in \mtilde$ under the action of $g \in \Gamma$ by $g.x$, or simply by $gx$, and denote the $\Gamma$ orbit of $x$ by $\Gamma \cdot x$. We denote the ball of radius $T$ centered at a point $x$ by $B_T(x)$. In his Ph.D. thesis (see \cite{margulis-thesis} for an english translation), Margulis proved that for $x, y \in \mtilde$,
\begin{align} \label{margulis-count}
    \# (B_T(x) \cap \Gamma \cdot y) \sim c_{x, y} e^{\delta T}, \qquad \text{ as } T \to \infty,
\end{align}
where $c_{x, y}$ is a constant only depending on $x$ and $y$, and $\delta$ denotes the topological entropy of geodesic flow on the unit tangent bundle of $M$.

In \cite{pollicott_error}, using the dynamical zeta function approach, Sharp and Pollicott obtained exponentially small error term for the above count, when $M$ is $2$--dimensional and $x = y$. More precisely, for a closed surface $M$ of variable negative curvature, they proved there exists a constant $\kappa > 0$, only depending on the geometry of $M$, such that for $x \in \mtilde$ we have 
\begin{align*}
    \# (B_T(x) \cap \Gamma \cdot x) = c_{x, x} e^{\delta T} + \bfO(e^{(\delta - \kappa) T}).
\end{align*}

Fixing an element $\id \neq \gamma \in \Gamma$, denote the conjugacy class of $\gamma$ by $\conj_\gamma$, that is, 
$$\conj_\gamma \coloneqq \{g^{-1}\gamma g: g\in \Gamma \}.$$
Our main theorem, proved in Section \ref{count-control}, is as follows.

\begin{introthm} \label{main-theorem}
    Assume $M$ is two-dimensional, or of dimension $n \geq 3$ and curvature bounded above by $-1$ and below by $-(\clowval)^2$. Then there exists a constant $\kappa$, only depending on the geometry of $M$, such that the following holds.
    For every $\id \neq \gamma \in \Gamma$ and $x, y \in \mtilde$, there exists a constant $\sigma = \sigma(x, y, \gamma)$ such that for $T > 0$,
    \begin{align*}
        \#(B_T(x) \cap \conj_\gamma \cdot y) = \sigma e^{\frac{\delta}{2}T} + \bfO_{\gamma, x, y} (e^{(\frac{\delta}{2} - \kappa)T}).
    \end{align*}
\end{introthm}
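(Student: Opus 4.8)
The plan is to convert the count into a weighted count of $\Gamma$--translates of the primitive closed geodesic determined by $\gamma$ that pass within radius $\approx T/2$ of $x$, and then to run a Margulis--type dynamical argument whose error term comes from exponential mixing of the geodesic flow with respect to the Bowen--Margulis measure $\mbarBM$. For the first, algebraic, reduction note that $\Gamma$ is torsion free and $\gamma\ne\id$ is a hyperbolic isometry, so its centralizer in $\Gamma$ is infinite cyclic, generated by the primitive element $\gamma_0$ with $\gamma=\gamma_0^m$; write $L=A_{\gamma_0}=A_\gamma\subset\mtilde$ for the common axis and $c=\langle\gamma_0\rangle\backslash L\subset M$ for the primitive closed geodesic in the free homotopy class of $\gamma$. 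Then $g\mapsto g^{-1}\gamma g$ induces a bijection $\langle\gamma_0\rangle\backslash\Gamma\to\conj_\gamma$; the isometry $g^{-1}\gamma g$ has axis the lift $\tilde c_g:=g^{-1}L$ of $c$, call it $h_{\tilde c}$, and since $d(x,g^{-1}\gamma g\,y)=d(gx,\gamma gy)$ we get
\[
 \#(B_T(x)\cap\conj_\gamma\cdot y)\;=\;\#\bigl\{\text{lifts }\tilde c\text{ of }c\;:\;d(x,h_{\tilde c}\,y)\le T\bigr\}.
\]

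Next (this is where $K\le -1$ enters; for a surface one first rescales the metric so that $\sup K\le -1$, which changes nothing essential) I would show that $d(x,h_{\tilde c}\,y)$ is governed by the distance from $x$ to the axis $\tilde c$: using convexity of $d(\cdot,\cdot)$, thin triangles in $\cat(-1)$, and the exponential convergence of asymptotic rays, one proves
\[
 d(x,h_{\tilde c}\,y)\;=\;2\,d(x,\tilde c)\;+\;\varrho(x,y,\tilde c)\;+\;\bfO\bigl(e^{-c_0\,d(x,\tilde c)}\bigr),
\]
where $\varrho$ is \emph{bounded} (roughly by $d(x,y)+\ell(\gamma)$) and H\"older in the pair of endpoints of $\tilde c$ in $\partial_\infty\mtilde$; it is assembled from the translation length of $\gamma$, the minimal displacement constant of $h_{\tilde c}$ (which is conjugation invariant, hence the same for all lifts), and the angular position of $\tilde c$ relative to $x$ and $y$, and it is exactly what makes the final constant depend on $x,y,\gamma$ and not merely on $d(x,y)$. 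That the displacement structure forces $\varrho$ to be bounded is a short consistency check. Thus the count becomes a H\"older--weighted count of lifts $\tilde c$ of $c$ with $d(x,\tilde c)$ at most about $T/2$, and the geometric error $e^{-c_0T}$ is far below the target error $e^{(\delta/2-\kappa)T}$ once $\kappa$ is small.

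It then remains to count, with an exponentially small error, the lifts $\tilde c$ of $c$ with $d(x,\tilde c)\le R$ (here $R\approx T/2$), weighted by $\mathbf 1[\,2d(x,\tilde c)+\varrho\le T\,]$. Such a lift corresponds to a geodesic arc from $x$ meeting $\tilde c$ perpendicularly of length $\le R$; projecting to $M$, these are the common perpendiculars of length $\le R$ from the point $\bar x$ to the closed geodesic $c$. By the Margulis--Roblin method — equivalently, effective common-perpendicular counting in the style of Parkkonen--Paulin — one thickens each such arc to a flow box in $T^1M$, rewrites the count using the geodesic flow $\phi_t$, and finds it asymptotic to $c_1\,e^{\delta R}$; the exponential rate $\delta$ comes from the $e^{\delta t}$--scaling of the Patterson--Sullivan (strong unstable) conditional measures of $\mbarBM$ under $\phi_t$, and the constant $c_1$ is expressed through the skinning measure of $c$ on the unit normal bundle $N^1c$, the Patterson--Sullivan measure viewed from $x$, and $\|\mbarBM\|$. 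Carrying the bounded H\"older weight of the previous step through this asymptotic replaces $c_1\,e^{\delta R}$ by $\sigma(x,y,\gamma)\,e^{\delta T/2}$, with $\sigma$ now also incorporating an integral of $e^{-\frac{\delta}{2}\varrho}$. The error term is produced by exponential mixing of $\phi_t$ with respect to $\mbarBM$: for surfaces this is classical (Dolgopyat), and in dimension $n\ge 3$ it is available precisely under the pinching $-(\clowval)^2\le K\le -1$, which is what guarantees the regularity of the stable/unstable foliations and holonomies used both in the mixing estimate and in decomposing the skinning measure of $c$ into pieces carried by strong unstable leaves. Mixing converts the equidistribution into a power saving $e^{-\kappa_1 t}$, which after $R\approx T/2$ gives $\bfO(e^{(\delta/2-\kappa)T})$.

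The main obstacle is making this last step effective. One must choose the thickening scale $\epsilon=\epsilon(T)\to 0$ in balance with the mixing rate: the ``shell'' of lifts $\tilde c$ with $d(x,\tilde c)$ within $O(\epsilon)$ of the cutoff has size $\asymp\epsilon\,e^{\delta T/2}$, so $\epsilon$ must decay like $e^{-\kappa T}$, and then the mixing estimate has to be applied against test functions whose $C^1$ (or H\"older) norms grow polynomially in $1/\epsilon$ — the usual tension that fixes the achievable $\kappa$. One must also control the normal holonomy of $c$ uniformly in order to write its skinning measure as a controlled superposition of unstable conditionals (again the source of the pinching hypothesis in dimension $\ge 3$), and must track the bounded but non-constant weight $\varrho$ all the way through: since the near--extremal perpendiculars make up a positive proportion of the count, $\varrho$ genuinely shapes $\sigma(x,y,\gamma)$ through the integral $\int e^{-\frac{\delta}{2}\varrho}\,d(\text{skinning})$ rather than merely perturbing it, so the H\"older regularity of $\varrho$ from the geometric step must be quantitatively compatible with the mixing rate.
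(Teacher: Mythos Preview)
Your outline follows the same architecture as the paper --- reduce to $\langle\gamma_0\rangle$-cosets via $g\mapsto g^{-1}\gamma g$, establish $d(gx,\gamma gy)=2\,d(gx,L)+\varrho+\bfO(e^{-c\,d(gx,L)})$, and then feed this into a Parkkonen--Paulin style effective common-perpendicular count driven by exponential mixing --- but two points need correction.

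First, the structure of $\varrho$ matters more than you say. The paper's Proposition~\ref{reduction-prop} shows that $\varrho$ \emph{splits} as $F_1(v_1(g))+F_2(g^{-1}v_2(g))$, with $F_1$ a $\gamma$-invariant H\"older function on $\partial^1 L$ depending only on the landing direction of the perpendicular, and $F_2$ a H\"older function on $S(x)$ depending only on the departure direction. This separation is not a convenience but the mechanism: it lets one build the two test functions as thickenings of the \emph{flowed} sets $\calG^{F_1}(\partial^1 L)$ and $\calG^{F_2}(S(x))$ separately (equation~(\ref{defoftest})), so that mixing produces the product $\sigma_\gamma(F_1)\,\sigma_x(F_2)$. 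Your $\varrho$, described only as H\"older in the endpoints of $\tilde c$, couples both ends of the perpendicular, and you have not explained how such a joint weight can be pushed through the dynamical argument with a power saving; the layer-cake or partition tricks one might try do not obviously preserve the error term.

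Second, you misidentify the role of the pinching hypothesis. Exponential mixing in dimension $n\ge 3$ is already available under the much weaker $1/9$-pinching ($\clow\le 3$); the bound $\clow\le\clowval$ is used instead to verify the \emph{radius H\"older-continuity} condition, namely that $r\mapsto\muss_v(\bss{v,r})$ is H\"older (Theorem~\ref{radius-hold-cont}). This enters twice in the effective count: once to show the integrated overlap $J^\infty_\tau(g)=1+\bfO(\tau)$ via H\"older-thin boundaries of strong stable balls (Lemma~\ref{j almost one}), and once in the smoothing step that approximates the discontinuous test functions by H\"older ones with controlled norm (Appendix~\ref{construct-chi}). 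The constant $\clowval$ arises from playing the entropy bound $\delta\ge n-1$ against the Jacobi-field growth rate $e^{\clow t}$, and has nothing to do with the mixing rate itself.
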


\medskip{}
{\bf Previous works.}
The above theorem was first proved in \cite{huber_56} when $M$ is a compact surface of constant curvature $-1$, using Selberg's trace formula. (See also \cite{huber_98}) In \cite{poulin_conj}, the same is proved when $M$ is a compact manifold of constant curvature $-1$ and arbitrary dimension. 
In Corollary 10 of the same paper, for $M$ a compact manifold of variable negative curvature, it is proved that
$$\#(B_T(x) \cap \conj_\gamma \cdot y)  \asymp_{x, y, \gamma} e^{\frac{\delta}{2} T},$$
that is, there exists a constant $c= c(x, y, \gamma)$, such that for $T> 0$,
$$\frac{1}{c} e^{\frac{\delta}{2} T} \leq 
\#(B_T(x) \cap \conj_\gamma \cdot y) \leq c e^{\frac{\delta}{2} T}.$$

In the preprint \cite{pollicott_conj_prep}, building on \cite{pollicott_comparison}, Pollicott obtained asymptotics (without an error term) for $\conj_\gamma$ orbit counts, when $M$ is a surface of variable negative curvature \emph{with boundary}. At the end of that paper it is mentioned that the methods of the paper are not likely to work for a closed surface (i.e., a surface without boundary).

\medskip{}

{\bf Adjusted counts.}
The proof of Theorem \ref{main-theorem} has two main ingredients, the first being Proposition \ref{reduction-prop}, and the second being Theorem \ref{control counting}. Assuming these two, the proof of Theorem \ref{main-theorem} is given in Section \ref{count-control}. Theorem \ref{control counting} is stated for a point $x \in \mtilde$ and the axis $L_\gamma$ of an element $\id \neq \gamma \in \Gamma$, however, a similar statement holds (with the exact same proof) in the case where $L_\gamma$ is replaced by a point $y \in \mtilde$. (See Remark (\ref{convex_gen})) In this introduction, to better illustrate the idea beneath this thoerem, we state it for this latter case as Thoerem \ref{adjust-pp}. 

Fix $x, y \in \mtilde$ and let $h \from \Gamma \to \RR$ be a function such that for $g \in \Gamma$,
$$h(g) = d(x, gy) + \bfO(1).$$
Setting
$$\calB^h (T) \coloneqq \{g \in \Gamma : h(g) \leq T \},$$
it is clear that $\# \calB^h (T) \asymp e^{\delta T}$; however, to find more precise estimates on $\# \calB^h (T)$ we need more control over the difference $h(g) - d(x, gy)$. Theorem \ref{adjust-pp} states that if this difference only depends (upto a small error term) on the manner the geodesic segment $[x, gy]$ departs from $x$ and the manner it arrives at $gy$, then asymptotics for $\# \calB^h (T)$ can be obtained. Moreover, under some additional assumptions, an exponentially small error term may be obtained for this count. To state Theorem \ref{adjust-pp}, let $S(x)$ denote the unit tangent sphere based at $x \in \mtilde$, and for $g \in \Gamma$, let $v_x(g) \in S(x)$ (resp. $v_y(g) \in S(gy)$) be the tangent to $[x, gy]$ at $x$ (resp. $gy$) pointing towards $gy$ (resp. $x$).


\begin{introthm} \label{adjust-pp}
    Assume $M$ is two-dimensional, or of dimension $n \geq 3$ and curvature bounded above by $-1$ and below by $-(\clowval)^2$. Then for every $\alpha, \kappa > 0$, there exists a constant $\kappa'$, only depending on $\alpha, \kappa$, and the geometry of $M$, such that the following holds.
    Let $x, y \in \mtilde$, and 
    \begin{align*}
        F_x \from S(x) \to \RR, \quad \text{ and }\quad F_y \from S(y) \to \RR
    \end{align*}
    be \hold{\alpha} functions. Let the function $h \from \Gamma \to \RR$ satisfy
    \begin{align} \label{h_eq}
        h(g) = d(x, gy) - F_x (v_x(g)) - F_y(g^{-1}. v_y (g)) + \bfO ({e^{-\kappa d(x, gy) } }) 
    \end{align}
    for $g \in \Gamma$. Then there exists a constant $\sigma = \sigma (F_x, F_y)$ such that 
    \begin{align*}
        \# \calB^h (T) = \sigma e^{\delta T} + \bfO_{F_1, F_2} (e^{(\delta  - \kappa') T}).
    \end{align*}
\end{introthm}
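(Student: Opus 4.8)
The plan is to build everything on \emph{exponential mixing} of the geodesic flow $(g_t)$ on $T^1M$ with respect to the Bowen--Margulis--Sullivan probability measure $m_{\mathrm{BM}}$: for a surface this is Dolgopyat's theorem, and in dimension $n\geq 3$ it is precisely the regime cut out by the pinching $-(\clowval)^2\leq K\leq -1$, under which the weak stable/unstable foliations are regular enough for a Dolgopyat-type argument to apply. From it I would first extract an \emph{effective directional count}. For $g\in\Gamma$ write $u_y(g):=g^{-1}.v_y(g)\in S(y)$ (the unit vector at $y$ pointing toward $g^{-1}x$), and let $\mu_x,\mu_y$ be the Patterson--Sullivan densities realized on $S(x)$ and $S(y)$. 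The assertion to establish is that there are $\kappa_0=\kappa_0(\alpha,M)>0$ and $c>0$ such that for every $\alpha$--H\"older $\psi$ on $S(x)\times S(y)$ and every $R>0$,
\begin{equation}\label{eff-dir}
  \sum_{g\in\Gamma,\ d(x,gy)\leq R}\psi\bigl(v_x(g),u_y(g)\bigr)
  = c\,e^{\delta R}\int_{S(x)\times S(y)}\psi\,d(\mu_x\otimes\mu_y)
  + \bfO\bigl(\|\psi\|_{C^{0,\alpha}}\,e^{(\delta-\kappa_0)R}\bigr).
\end{equation}
This is the quantitative form of Margulis's mixing argument: thicken the unit fibers over $x$ and $y$ to small flow boxes in $T^1\mtilde$ carrying the weight $\psi$, unfold $\sum_g\mathbf 1[g_tU_x\cap gU_y\neq\emptyset]$ via $\Gamma$--invariance of $m_{\mathrm{BM}}$ into a matrix coefficient $\langle g_{-t}\mathbf 1_{U_x},\mathbf 1_{U_y}\rangle_{m_{\mathrm{BM}}}$, apply exponential mixing, and integrate over $t\in[0,R]$; the conformal density of dimension $\delta$ produces the factor $e^{\delta R}$, the two endpoints of the segments $[x,gy]$ decouple in the limit into the product $\mu_x\otimes\mu_y$, and the flow-box scale is taken to be a fixed power of $e^{-R}$ so that the mixing error balances the flow-box approximation error. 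The case $\psi\equiv 1$ is the effective form of \eqref{margulis-count}.

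Granting \eqref{eff-dir}, I would next reduce to an exact distance function. Set $G\from S(x)\times S(y)\to\RR$, $G(u,w):=F_x(u)+F_y(w)$, and abbreviate $G(g):=G(v_x(g),u_y(g))$; then $G$ is $\alpha$--H\"older with $|G|\leq A$ for some $A$, and by \eqref{h_eq} we have $h(g)=d(x,gy)-G(g)+E(g)$ with $|E(g)|\leq c'e^{-\kappa d(x,gy)}$. Put $h'(g):=d(x,gy)-G(g)$. If $d(x,gy)\leq T/2$ and $T$ is large, then $h(g)\leq T$ and $h'(g)\leq T$ both hold automatically; if $d(x,gy)>T/2$, then $|E(g)|\leq c'e^{-\kappa T/2}$. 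Hence $\#\calB^{h'}(T-c'e^{-\kappa T/2})\leq\#\calB^{h}(T)\leq\#\calB^{h'}(T+c'e^{-\kappa T/2})$, and since perturbing $T$ by $c'e^{-\kappa T/2}$ changes $\sigma e^{\delta T}$ by only $\bfO(e^{(\delta-\kappa/2)T})$ and does not affect an error of the form $\bfO(e^{(\delta-\kappa')T})$, it suffices to prove $\#\calB^{h'}(T)=\sigma e^{\delta T}+\bfO(e^{(\delta-\kappa')T})$ for suitable $\sigma,\kappa'$.

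For this last step, the idea is to fix $\theta=e^{-\lambda T}$ ($\lambda>0$ small, to be chosen) and pick smooth functions $\psi^{-}_T\leq\mathbf 1_{(-\infty,T]}\leq\psi^{+}_T$ agreeing with $\mathbf 1_{(-\infty,T]}$ outside a $\theta$--interval around $T$ and satisfying $\|(\psi^{\pm}_T)^{(j)}\|_\infty=\bfO(\theta^{-j})$, so that $\#\calB^{h'}(T)$ is trapped between $\Sigma^{\pm}_T:=\sum_g\psi^{\pm}_T(d(x,gy)-G(g))$. By Fubini and the fundamental theorem of calculus, any $\Phi$ on $[0,\infty)\times S(x)\times S(y)$ vanishing for large first coordinate satisfies
\begin{equation*}
  \sum_g\Phi\bigl(d(x,gy),v_x(g),u_y(g)\bigr)=-\int_0^\infty\Bigl[\sum_{g:\,d(x,gy)\leq s}\partial_s\Phi\bigl(s,v_x(g),u_y(g)\bigr)\Bigr]ds.
\end{equation*}
Applying this with $\Phi(s,u,w)=\psi^{\pm}_T(s-G(u,w))$: for each fixed $s$, the function $\partial_s\Phi(s,\cdot)=(\psi^{\pm}_T)'(s-G(\cdot))$ is $\alpha$--H\"older with $\|\partial_s\Phi(s,\cdot)\|_{C^{0,\alpha}}=\bfO(\theta^{-2})$ and is supported in $s\in[T-A-\theta,\,T+A+\theta]$, so \eqref{eff-dir} replaces the inner sum by $c\,e^{\delta s}\int\partial_s\Phi(s,\cdot)\,d(\mu_x\otimes\mu_y)+\bfO(\theta^{-2}e^{(\delta-\kappa_0)s})$. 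Integrating in $s$, integrating by parts in $s$ inside the main term (with $\Phi(s,\cdot)=0$ for $s$ large and $\Phi(0,\cdot)=\psi^{\pm}_T(-G(\cdot))\equiv 1$ contributing an absorbed $\bfO(1)$), and substituting $s=r+G(u,w)$, one gets
\begin{equation*}
  \Sigma^{\pm}_T=\sigma\,e^{\delta T}+\bfO\bigl(\theta e^{\delta T}\bigr)+\bfO\bigl(\theta^{-2}e^{(\delta-\kappa_0)T}\bigr),\qquad
  \sigma:=c\int_{S(x)\times S(y)}e^{\delta(F_x(u)+F_y(w))}\,d\mu_x(u)\,d\mu_y(w)\in(0,\infty).
\end{equation*}
Choosing $\lambda=\kappa_0/4$ makes both error terms $\bfO(e^{(\delta-\kappa_0/4)T})$, so $\#\calB^{h'}(T)=\sigma e^{\delta T}+\bfO(e^{(\delta-\kappa_0/4)T})$, and then $\#\calB^{h}(T)=\sigma e^{\delta T}+\bfO(e^{(\delta-\kappa')T})$ with $\kappa'=\min\{\kappa_0/4,\kappa/2\}>0$ depending only on $\alpha$, $\kappa$, and the geometry of $M$; here $\sigma=\sigma(F_x,F_y)$ is finite and positive because $F_x,F_y$ are bounded, and that $\sigma$ and the implied constants also depend on $x,y$ is harmless.

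The main obstacle is \eqref{eff-dir} itself: converting exponential mixing into a count that simultaneously controls the radius and the incoming and outgoing directions, with an error that is still exponentially small once the flow-box scale has been optimized against the mixing rate — this is the step that genuinely uses the dimension-and-pinching hypothesis. Everything after that is organizational: the reduction to $h'$, the smooth sandwiching, and the Fubini/integration-by-parts identity, which expresses $\Sigma^{\pm}_T$ through \eqref{eff-dir} applied to the one-parameter family $\partial_s\Phi(s,\cdot)$ of H\"older test functions whose norms grow only polynomially in $1/\theta$ — precisely what lets the choice $\theta=e^{-\kappa_0T/4}$ keep the error exponentially below $e^{\delta T}$.
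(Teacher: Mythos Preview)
Your outline is sound and your reduction from \eqref{eff-dir} to the theorem via smoothing $\psi^{\pm}_T$, Fubini, and integration by parts is correct and cleanly executed. But the organization differs from the paper's, and there is a misattribution worth flagging.

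\textbf{Comparison with the paper.} The paper does not isolate an intermediate ``effective directional count'' like your \eqref{eff-dir}. Instead it builds the adjustment functions $F_x,F_y$ directly into the test functions: it considers the flowed boundary $\calG^{F}(\partial^1 D)=\{\calG_{F(u)}(u):u\in\partial^1 D\}$ and takes indicator functions of dynamical thickenings of this set (divided by the appropriate strong-stable ball measures). The core lemmas are then (a) that the total time-integrated contribution of each $g$ equals $1+\bfO(\tau)$, and (b) that the correlation of the two (non-H\"older) test functions decays exponentially after a smoothing step. Your approach instead front-loads all the geometric work into \eqref{eff-dir} with a general H\"older weight $\psi$, and then handles the adjustment by an integration-by-parts trick in the radius variable. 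Both routes need the same two ingredients --- exponential mixing plus control on strong-stable ball boundaries --- but the paper's packaging avoids having to prove a directional equidistribution theorem in its own right, at the cost of slightly more bespoke test functions.

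\textbf{Where the pinching actually enters.} You write that the pinching $-\bigl(\tfrac{n-1}{n-2}\bigr)^2\leq K\leq -1$ is ``precisely the regime\dots under which the weak stable/unstable foliations are regular enough for a Dolgopyat-type argument.'' That is not how the paper uses it. Exponential mixing is already available under the much weaker $\tfrac{1}{9}$--pinching ($\clow\leq 3$); the sharper bound $\clow\leq\tfrac{n-1}{n-2}$ is invoked only to verify a \emph{radius H\"older-continuity} (RHC) condition on the Bowen--Margulis measure of strong-stable balls: roughly, $\muss_v(\bss{v,r+\epsilon}\setminus\bss{v,r})\prec\epsilon^{\alpha}$ uniformly. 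This is needed so that shrinking the flow-box scale to $e^{-\lambda T}$ produces a boundary error that is a genuine power of $e^{-T}$. In your sketch of \eqref{eff-dir}, the sentence ``the flow-box scale is taken to be a fixed power of $e^{-R}$ so that the mixing error balances the flow-box approximation error'' hides exactly this point: without RHC (or something equivalent) you cannot show the approximation error is exponentially small. So \eqref{eff-dir}, which you correctly identify as the main obstacle and leave unproved, requires not just Dolgopyat-type mixing but also this boundary-regularity input --- and that is where the pinching hypothesis is genuinely consumed.
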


For $g \in \Gamma$, $h(g)$ can be thought of as the distance between $x$ and $gy$, adjusted by the functions $F_x$ and $F_y$ on both endpoints of $[x, gy]$. $F_x$ and $F_y$ are called the \emph{adjustment functions}, and $h(g)$ is called the \emph{adjusted distance} between $x$ and $gy$.

\medskip{}

{\bf Remarks.} We provide the following remarks for the experts.
\begin{enumerate} [(i)]
    \item \label{convex_gen} Our proof of Theorem \ref{adjust-pp} closely follows the proof of Theorem 3 of \cite{PPCommonPerp} (see the beginning of Section \ref{test-intro} for a discussion), hence we expect that Theorem \ref{adjust-pp} holds in the more general setting of \cite{PPCommonPerp}, that is, when $x$ and $y$ are replaced by nonempty proper closed convex subsets $C_1$ and $C_2$ of $\mtilde$ such that $\stab{C_i} \backslash C_i$ has finite skinning measure for $i = 1,2$; and the role of $[x, gy]$ is played by the common perpendicular between $C_1$ and $g . C_2$. Indeed, we refrained from writing the proof in this generality to first, keep the notation simple, and second, since we did not have a geometric incentive to do so.
    \item \label{poli_err} As mentioned at the beginning of this section, the error term for (\ref{margulis-count}) is obtained, when $x = y$ and $M$ is a compact surface, in \cite{pollicott_error}. The proof given in that paper relies on a certain coding of geodesic flow, wich is not established (to the best of the author's knowledge) for manifolds of dimension greater than $2$. We now describe an alternative approach to obtain error term. By Theorem \ref{radius-hold-cont}, the so-called RHC condition (see Definition \ref{rhc_def}) holds for manifolds satisfying the assumptions of Theorem \ref{main-theorem}, hence, using Theorem 3 of \cite{PPCommonPerp} for two points $x, y \in \mtilde$, we obtain exponentially small error term in (\ref{margulis-count}) for such manifolds. This is, to the best of the author's knowledge, the best result in this direction.
\end{enumerate}

\medskip{}
{\bf Notation and conventions.} 
Let $A, B \in \RR$ and $\bullet$ be a set of parameters. We write $A = \bfO_\bullet (B)$ if $|A| \leq c B$ for a constant $c = c(\bullet)$ that only depends on $\bullet$. 
We write $A \prec_\bullet B$ (resp. $A \succ_\bullet B$, $A \asymp_\bullet B$) if there exists a constant $c = c(\bullet)$ such that $A \leq c B$ (resp. $A \geq \frac{B}{c}$, $\frac{A}{c} \leq B \leq c A$).
For any of these symbols, if a subset of the parameters $\bullet$ are fixed (at the beginning of a section or throughout a proof), we may remove them from the subscript of that symbol.

\medskip{}
{\bf Acknowledgements.} I want to thank my advisor, Kasra Rafi, for his constant support during the writing of this paper. I also want to thank Fr\'{e}d\'{e}ric Paulin for helpful discussions on the RHC condition.

\section{Background} \label{background}

We fix a closed compact manifold of (variable) negative curvature $M$ throughout the text, and denote its universal cover by $\mtilde$. We further assume that the curvature of $M$ is bounded from above (resp. below) by $-1$ (resp. $-\clow^2$).
We choose, once and for all, a point $o \in \mtilde$, called the \emph{origin}, which remains the same for the rest of this paper.
We denote the unit tangent sphere at $x \in \mtilde$ by $S(x)$, and we use the same notation when $x$ is an element of $M$. The unit tangent bundle of $\mtilde$ (resp. $M$) is denoted by $T^1 \mtilde$ (resp. $T^1 M$), and the \emph{basepoint projection} map, sending a vector in $T^1 \mtilde$  (resp. $T^1M$) to its basepoint in $\mtilde$  (resp. $M$), is denoted by $\pi$ in both cases.
The natural map from $\mtilde$ to $M$, denoted by $\Pi$, induces a map from $T^1 \mtilde$ to $T^1 M$, which we also denote by $\Pi$. For a point $x$ in $\mtilde$ (resp. $M$) and $u \in S(x)$, we denote $-u$ by $\iota(u)$. Flowing $u \in T^1 \mtilde$ (resp. $u \in T^1 M$) by time $t$, we obtain an element of $T^1 \mtilde$ (resp. $T^1 M$), which we denote by $\calG_t(u)$ or $u_t$, depending on the occasion. The fundamental group $\Gamma \coloneqq \pi_1(M)$ acts by deck transformations on $T^1 \mtilde$, and we denote the image of $u \in T^1 \mtilde$ under the action of $g \in \Gamma$ by $g.u$, or simply by $gu$ when no confusion can arise. It is well-known that if $\id \neq \gamma \in \Gamma$, then there exists a bi-infinite geodesic $L_\gamma$, called the \emph{axis} of $\gamma$, such that $\gamma$ acts on $L_\gamma$ by translation.

The boundary at infinity of $\mtilde$ is denoted by $\bdry$. 
For $x, y \in \mtilde$, the geodesic segment connecting $x$ to $y$ is denoted by $[x, y]$, and for $\zeta \in \bdry$, the geodesic ray from $x$ to $\zeta$ is denoted by $[x, \zeta)$. 
For $u \in T^1 \mtilde$, the geodesic ray $\{\pi(u_t): t \geq 0\}$ (resp. $\{\pi(u_t): t \leq 0\}$) hits $\bdry$ at a point which we denote by $u^+$ (resp. $u^-$).
For two distinct points $x, y \in \mtilde$, the unit vector $v \in S(x)$ (resp. $v \in S(y)$) such that $[x, y] = \{\pi(v_t): 0 \leq t \leq d(x, y)\}$ is called the (unit) tangent vector to $[x, y]$ at $x$ (resp. $y$), and is denoted by $P^1_x(y)$ (resp. $P^1_y(x)$). Fixing $x$, the map $P^1_x \from \mtilde \to S(x)$ extends continuously to a map from $\mtilde \cup \bdry$ to $S(x)$, which we also denote by $P^1_x$.

A bi-infinite geodesic line $L$, simply called a geodesic from now on, hits the boundary of $\mtilde$ at two points, the set of which we denote by $\partial_\infty L$. Fixing a geodesic $L$, we denote the foot of perpendicular from a point $x \in \mtilde$ to $L$ by $P_L(x)$, and denote the vector tangent to $[x, P_L(x)]$ at $P_L(x)$ by $P^1_L(x) \in \partial^1 L$, where $\partial^1 L$ denotes the unit normal bundle of $L$. The map $P_L \from \mtilde \to L$ (resp. $P^1_L: \mtilde \to \partial^1 L$) extends continuously to a map from $\mtilde \cup \bdry \setminus L_\infty$ to $L$ (resp. $\partial^1 L$), which we also denote by $P_L$ (resp. $P^1_L$). 


For $x \in \mtilde$ and $\zeta, \eta \in \bdry$, define the \emph{visual distance} between $\zeta, \eta$ as seen by $x$, by
\begin{align*}
    d_x (\zeta, \eta) \coloneqq e^{-T} \quad \text{ for }\quad T \coloneqq \frac{1}{2} \lim_{t \to \infty} (d(\zeta_t, \eta_t) - d(x, \zeta_t)  - d(x, \eta_t)),
\end{align*}
where $t \mapsto \zeta_t$ (resp. $t \mapsto \eta_t$) is any geodesic ray ending at $\zeta$ (resp. $\eta$). One can easily see that for $x,\, \zeta,\, \eta,\, T$ as above, and $u, v \in S(x)$ such that $u^+ = \zeta$ and $v^+ = \eta$, we have
$$d(\pi(u_T), \pi(v_T)) \asymp_{\clow} 1.$$
The visual distance remains the same, upto a multiplicative constant, if we change the basepoint. More precisely, for $x, y \in \mtilde$ and $\zeta, \eta \in \bdry$ we have
\begin{align*}
    e^{-d(x, y)} \leq \frac{d_x(\zeta, \eta)}{d_y(\zeta, \eta)} \leq e^{d(x, y)}.
\end{align*}

The \emph{Busemann cocycle} $\beta \from \bdry \times \mtilde \times \mtilde \to \RR$ is defined by 
\begin{align*}
    \beta(\zeta, x, y) \coloneqq \lim_{t \to \infty} (d(\zeta_t, x) - d(\zeta_t, y)),
\end{align*}
where $t \mapsto \zeta_t$ is any geodesic ray ending at $\zeta$. We summarize the elementary properties of Busemann cocycle as follows. For $x, y, z \in \mtilde$ and $\zeta \in \bdry$, we have
\begin{align*}
    \beta(\zeta, x, y) &= -\beta(\zeta, y, x);\\
    \beta(\zeta, x, y) &= \beta(\zeta, x, z) + \beta(\zeta, z, y);\\
    |\beta(\zeta, x, y)| &\leq d(x, y).
\end{align*}

We consider the following `default' metrics.
\begin{itemize}
    \item The Riemannian metric $d_{\mtilde}$ (resp. $d_M$) on $\mtilde$ (resp. $M$).
    \item The Sassaki metric $d_S$ on $T^1 \mtilde$ (resp. $T^1 M$), which is the Riemannian metric induced by Sassaki's inner product on $TT\mtilde$. (resp. $TT M$.) See Section 2.3 of \cite{pps-book} for more details.
    \item The visual distance $d_o$ on $\bdry$, seen from the origin $o$. 
\end{itemize}
We may remove the subscript from $d_\bullet$ when it can be infered from the context.

For metric spaces $(X, d_X)$ and $(Y, d_Y)$, $X \times Y$ is always equipped with the sup metric 
$$d_{X \times Y} ((x_1, y_1), (x_2, y_2)) \coloneqq \max \{d_X(x_1, x_2), d_Y(y_1, y_2) \}.$$ 
A function $F \from X \to Y$ is said to be \hold{(\alpha, c)} for some constants $\alpha, c > 0$, if for all $x_1, x_2 \in X$ we have
\begin{align*}
    d_Y(F(x_1), F(x_2)) \leq c d_X(x_1, x_2)^\alpha.
\end{align*}
We say that the function $F$ is \hold{\alpha} for some $\alpha > 0$, if for every compact set $K \subset X$ there is a constant $c_K$ such that the restriction of $F$ to $K$ is \hold{(\alpha, c_K)}. Finally, we say that $F$ is \holder{}-continuous if it is \hold{\alpha} for some $\alpha > 0$. Note that most combinations of \holder{}-continuous functions are \holder{}-continuous. For example, if $X, Y_1, Y_2, Z$ are metric spaces and $F_1 \from X \to Y_1,\, F_2 \from X \to Y_2,\, H \from Y_1 \times Y_2 \to Z$ 
are \holder{}-continuous, then $x \mapsto H(F_1(x), F_2(x))$ is also \holder{}-continuous.

\section{\holder{}-continuous functions} \label{hold_section}
The goal of this section is to prove various functions, defined in terms of geometry of $\mtilde$, are \holder{}-continuous. These facts are used throughout the text, but especially in Section \ref{reduction-control}, to establish the \holder{}-continuity of functions introduced in Proposition \ref{reduction-prop}. The first few lemmas are well-known; we provided proofs however, since we couldn't find any in the literature. 

\begin{lemma} \label{gt-holder}
    The map from $\RR \times T^1 \mtilde$ to $T^1 \mtilde$ sending $(t, u)$ to $\calG_t(u)$ is Lipschitz.
\end{lemma}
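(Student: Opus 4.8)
The plan is to estimate the Lipschitz constant of $(t,u)\mapsto\calG_t(u)$ in the two factors separately and then combine them by the triangle inequality. Here "Lipschitz" is understood in the sense of the paper's conventions, so it is enough to exhibit, for each $T_0>0$, a constant depending only on $T_0$ and $\clow$ that works on $[-T_0,T_0]\times T^1\mtilde$; this makes the map Lipschitz on every compact set, which is the form in which the lemma is used later. In the flow direction the bound is immediate: for fixed $u$ the curve $t\mapsto\calG_t(u)$ is an integral curve of the geodesic vector field $X$ on $T^1\mtilde$, and $\|X(w)\|_S=1$ for every $w$ (its horizontal component is the unit vector $w$ itself and its vertical component vanishes), so the curve is parametrized by arclength and $d_S(\calG_tu,\calG_su)\le|t-s|$ for all $s,t$.

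The content is the estimate in the $u$ direction, for which I would use Jacobi fields. Fix $u\in T^1\mtilde$ and $t\in\RR$, let $\gamma$ be the geodesic with $\dot\gamma(0)=u$, and let $\xi\in T_u(T^1\mtilde)$. Under the standard identification, $\xi$ corresponds to the pair $\big(J(0),\tfrac{DJ}{ds}(0)\big)$ of a Jacobi field $J$ along $\gamma$, one has $\|\xi\|_S^2=\|J(0)\|^2+\|\tfrac{DJ}{ds}(0)\|^2$, and $d_u\calG_t(\xi)$ corresponds to $\big(J(t),\tfrac{DJ}{ds}(t)\big)$ (this is the only mildly non-formal input; see Section 2.3 of \cite{pps-book}). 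I would then apply Grönwall's inequality to $f(s)=\|J(s)\|^2+\|\tfrac{DJ}{ds}(s)\|^2$: the Jacobi equation $\tfrac{D^2J}{ds^2}=-R(J,\dot\gamma)\dot\gamma$ together with the curvature pinching $-\clow^2\le K\le-1$ — which forces the symmetric Jacobi operator $w\mapsto R(w,\dot\gamma)\dot\gamma$ to have operator norm at most $\clow^2$ — gives $|f'|\le(1+\clow^2)f$, hence $f(t)\le e^{(1+\clow^2)|t|}f(0)$. Therefore $\|d_u\calG_t\|_{\mathrm{op}}\le e^{\frac12(1+\clow^2)|t|}$, and crucially this holds at every $u$. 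Since $M$ is compact, $T^1\mtilde$ with the Sasaki metric is complete, so a pointwise bound on the differential integrates along length-minimizing paths to a global bound: $\calG_t\from T^1\mtilde\to T^1\mtilde$ is $e^{\frac12(1+\clow^2)|t|}$-Lipschitz.

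Combining, with the sup metric on $\RR\times T^1\mtilde$,
\begin{align*}
    d_S(\calG_{t_1}u_1,\calG_{t_2}u_2)&\le d_S(\calG_{t_1}u_1,\calG_{t_1}u_2)+d_S(\calG_{t_1}u_2,\calG_{t_2}u_2)\\
    &\le e^{\frac12(1+\clow^2)|t_1|}\,d_S(u_1,u_2)+|t_1-t_2|,
\end{align*}
so on $[-T_0,T_0]\times T^1\mtilde$ the map is $\big(e^{\frac12(1+\clow^2)T_0}+1\big)$-Lipschitz, proving the lemma. I do not anticipate a genuine obstacle; the one point that must be handled with care is that the bound on $\|d_u\calG_t\|_{\mathrm{op}}$ be uniform in $u$, which is precisely why the Jacobi-field computation — valid verbatim at every point of $T^1\mtilde$ using only the ambient curvature bounds — is the right tool, rather than an argument localized to a compact part of $T^1\mtilde$. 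An equivalent route would be to note that compactness of $M$ makes the geodesic vector field smooth with bounded derivatives, hence globally Lipschitz on $T^1\mtilde$ in the Sasaki metric, and then invoke the classical Grönwall estimate for the flow of a Lipschitz vector field.
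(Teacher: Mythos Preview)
Your proof is correct and complete. It differs from the paper's argument, which is much terser: the paper observes that for fixed $t$ the time-$t$ map $\calG_t$ is a diffeomorphism of the \emph{compact} manifold $T^1M$, hence automatically Lipschitz in the Sasaki metric, and then lifts this to $T^1\mtilde$ via the local isometry $\Pi$; the flow-direction estimate is the same as yours. Your Jacobi-field / Gr\"onwall computation replaces the compactness step by a direct curvature-based bound, yielding the explicit constant $e^{\frac12(1+\clow^2)|t|}$ uniform in $u$. The paper's route is shorter and avoids any computation, while yours has the advantage of being quantitative and of working under curvature pinching alone, without appealing to compactness of $M$; the alternative you mention at the end (bounded geodesic vector field plus Gr\"onwall for flows) is essentially the paper's argument in disguise.
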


\begin{proof}
    Fixing $u \in T^1 \mtilde$, one can directly check that $\calG_t(u)$ is \lip{1} in $t$ (in fact, it is distance preserving). Fixing $t$, $\calG_t(\param)$ (the time $t$ flow on $T^1 M$) is a diffeomorphism on the compact manifold $T^1 M$, thus it is Lipschitz with respect to the Riemannian metric $d_S$ on this manifold. It follows that $\calG_t(\param)$ (the time $t$ flow on $T^1 \mtilde$) is also Lipschitz.
\end{proof}

\begin{lemma} \label{same endpoint converge}
    \begin{enumerate}[(i)]
        \item \label{all in mtilde} Fix $A > 0$; let $x, y, z \in \mtilde$ be such that $d(z, x) = d(z, y)$, and let $x' \in [z, x]$ and $y' \in [z,y]$ be such that $d(x', x) = d(y', y) = T$. Then 
        $$d(x', y') \prec_A e^{-T} d(x, y).$$
        
        \item \label{one in boundary} The conclusion of part (\ref{all in mtilde}) still holds if $z \in \bdry$ and $\beta(z, x, y) = 0$.

        \item \label{converge coarse} Fix $A > 0$, and let $x, y, z \in \mtilde$ be such that $d(x, y) \leq A$, and let $x' \in [z, x]$. Then, there exists $y' \in [z, y]$ such that
        $$d(x', y') \prec_A e^{-d(x', x)}\, d(x, y).$$ 

        \item \label{one in boundary coarse} Part (\ref{converge coarse}) remains true if $z \in \bdry$.
    \end{enumerate}
\end{lemma}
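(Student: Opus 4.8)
The plan is to prove (i) first by a comparison-geometry argument, then bootstrap (ii) from (i) by a limiting argument, and finally derive the "coarse" versions (iii) and (iv) from (i) and (ii) by inserting an auxiliary point. For part (i): since $d(z,x)=d(z,y)$, the points $x'$ and $y'$ sit at equal distance $d(z,x)-T$ from $z$ along the two geodesics $[z,x]$ and $[z,y]$. The upper curvature bound $-1$ gives a CAT$(-1)$ comparison: in the comparison triangle in $\HH^2$ (or in $\HH^2_{-1}$) with the same side lengths, the distance between the corresponding points decays like $e^{-T}$ times the opposite side, by the standard convexity/thin-triangle estimate in hyperbolic space (this is where $\prec_A$ comes in; the implied constant depends on the "radius" $A$ bounding how far apart things start, needed to keep the hyperbolic estimate uniform). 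Concretely I would compare with the function $t\mapsto d(\text{pt at distance }t\text{ from }z\text{ on }[z,x],\text{ pt at distance }t\text{ from }z\text{ on }[z,y])$, which in $\HH$ is $2\operatorname{arcsinh}(\sinh(d(z,x)-t)\cdot\text{const})$, decaying exponentially as $t$ increases, and use CAT$(-1)$ to transfer the upper bound to $\mtilde$. The lower curvature bound $-\clow^2$ is what lets me convert the chord at $z$ into a bound in terms of $d(x,y)$ rather than the visual angle.

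For part (ii): take $z_n\to z$ along the geodesic ray $[x,z)$ (equivalently, pick points $z_n$ on a ray to $z$). Because $\beta(z,x,y)=0$, the points on $[x,y]$ "equidistant from $z$" in the Busemann sense are the limits of the equal-distance points from $z_n$; more carefully, I would note that for $z_n$ far out, $d(z_n,x)$ and $d(z_n,y)$ differ by $o(1)$, apply (i) (or a small perturbation of it handling the slight inequality in the distances, which is harmless since it only shifts $x',y'$ along their geodesics by $o(1)$, contributing lower-order error), and let $n\to\infty$. The geodesics $[z_n,x]\to[z,x)$ and $[z_n,y]\to[z,y)$ uniformly on compacta, so $x',y'$ converge to the asserted points and the estimate passes to the limit.

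For parts (iii) and (iv): here $d(x,y)\le A$ but we no longer assume $d(z,x)=d(z,y)$. The idea is to replace $x$ by a point $\hat x$ on the ray/segment from $z$ through $x$ (or just past $x$) chosen so that $d(z,\hat x)=d(z,y)$ — possible, up to an additive $O(A)$ adjustment, precisely because $|d(z,x)-d(z,y)|\le d(x,y)\le A$ — and so that $d(\hat x, y)\prec_A d(x,y)$ (this last uses that moving along $[z,x]$ by at most $A$ changes the distance to the fixed point $y$ in a controlled way; a triangle-inequality plus the already-known part (i) estimate handles it). Then apply part (i) (resp. part (ii) when $z\in\bdry$) to the triple $z,\hat x,y$ to get $y'\in[z,y]$ with $d(\hat x', y')\prec_A e^{-d(\hat x',\hat x)}d(\hat x,y)$ where $\hat x'$ is the point on $[z,\hat x]$ at distance $d(x',x)+O(A)$ from $x$; finally compare $\hat x'$ with $x'$ (both on the geodesic $[z,x]$, at distance $O(A)$ apart after accounting for the reparametrization) using Lemma \ref{gt-holder}-type Lipschitz control of the flow, and absorb all the $e^{O(A)}$ factors into the constant implicit in $\prec_A$.

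The main obstacle I expect is \textbf{part (iii)/(iv)}: the bookkeeping needed to align the parametrizations — matching "distance $T$ back from the endpoint" on the original geodesic $[z,x]$ with the corresponding point on the modified geodesic $[z,\hat x]$, while keeping every discrepancy at the level of an additive constant depending only on $A$ (so it exponentiates to a multiplicative constant and gets swallowed by $\prec_A$). Parts (i) and (ii) are essentially the standard CAT$(-1)$ exponential-convergence-of-geodesics lemma and should be quick; the only subtlety there is making the constant's dependence on $A$ explicit, which is forced by the fact that in negative (not just nonpositive) curvature one cannot bound the spread purely by the endpoint distance without also knowing the configuration is not too large.
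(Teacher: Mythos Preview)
Your proposal is correct and follows essentially the same approach as the paper. For parts (i)--(ii) the paper simply cites the standard CAT$(-1)$ comparison estimate (Lemma 2.5 of \cite{pps-book}), which is exactly what you sketch. For parts (iii)--(iv) the paper takes the dual move to yours: rather than sliding $x$ to $\hat x$ along the ray from $z$ through $x$, it slides $y$ along the ray from $z$ through $y$ to a point $y''$ with $d(z,y'')=d(z,x)$, checks $d(x,y'')\le 2d(x,y)$ by the triangle inequality, and then applies part (i) directly to $x,y'',z$ with the \emph{given} $x'$ unchanged. This sidesteps precisely the bookkeeping you flagged as the main obstacle --- no reparametrization or comparison of $\hat x'$ with $x'$ is needed, since $x'$ never moves and the output $y'$ already lies on the geodesic through $z$ and $y$.
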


\begin{proof}
    Part (\ref{all in mtilde}) of this lemma directly follows from part (i) of Lemma 2.5. of \cite{pps-book}. Part (\ref{one in boundary}) can be proved in a similar way.
    To prove (\ref{converge coarse}), slide $y$ along the geodesic ray starting from $z$ and passing through $y$ to obtain $y''$ with $d(z, x) = d(z, y'')$.
    By trianlge inequality we have 
    $$d(y, y'') = |d(z, y) - d(z, x) | \leq d(x, y),$$
    hence
    $$d(x, y'') \leq d(x, y) + d(y, y'') \leq 2d(x, y) \leq 2A.$$
    Let $T \coloneqq d(x', x)$, and let $y' \in [y'', z]$ be such that $d(y'', y') = T$. By part (\ref{all in mtilde}) we have $d(x', y') \prec_A e^{-T} d(x, y)$, which is what we wanted. (\ref{one in boundary coarse}) follows from (\ref{one in boundary}) by a similar argument.
\end{proof}

In part (4) of Proposition 2.4. of \cite{bridson-book}, it is proved that the projection to a convex subset of a $\text{CAT}(0)$ space is distance non-increasing. 
This result may be improved for a $\cat(-1)$ space, as seen in Proposition \ref{proj-close}. (At least when the distance between the points being projected and the convex set is large enough.)  
In what follows, we only use this proposition when the convex set is a geodesic in $\mtilde$.

\begin{proposition} \label{proj-close}
    Fix $A > 0$, let $\calC$ be a convex set in $\mtilde$, and let $x, y \in \mtilde$ be such that $d(x, y) \leq A$. Then
    $$d(P_\calC(x), P_\calC(y)) \prec_A e^{-d(x, \calC)}\, d(x, y)$$
\end{proposition}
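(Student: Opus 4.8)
The plan is to start from the $\cat(0)$ fact already quoted — non-expansiveness of nearest-point projection onto a convex set (part (4) of Proposition 2.4 of \cite{bridson-book}) — and upgrade it using the upper curvature bound $-1$. First I would dispose of the case $d(x,\calC)\le A$: there non-expansiveness alone gives $d(P_\calC(x),P_\calC(y))\le d(x,y)\le e^{A}e^{-d(x,\calC)}d(x,y)$, which is stronger than claimed. So I may assume $d(x,\calC)>A$; since $|d(x,\calC)-d(y,\calC)|\le d(x,y)\le A$, both $x$ and $y$ then lie at distance $>d(x,\calC)-A$ from $\calC$, hence outside a fixed neighbourhood of $\calC$ on which $P_\calC$ is smooth (for general convex $\calC$ this needs a word about regularity of $\partial\calC$, but the only case actually used later is $\calC$ a geodesic, where $P_\calC$ is smooth off $\calC$ with no fuss).

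The crux will be the pointwise bound
\[
\bigl\|\,dP_\calC|_w\,\bigr\|\ \le\ \frac{1}{\cosh d(w,\calC)}\ \le\ 2e^{-d(w,\calC)}\qquad\text{for every }w\in\mtilde\setminus\calC ,
\]
the left side being the operator norm between the tangent spaces at $w$ and at $P_\calC(w)$. To get it, put $\rho=d(w,\calC)$, $p=P_\calC(w)$, and let $\gamma\colon[0,\rho]\to\mtilde$ be the unit-speed geodesic from $p$ to $w$, meeting $\calC$ orthogonally at $p$. Since moving $w$ toward $\calC$ along $\gamma$ does not change $P_\calC(w)$, the differential kills $\gamma'(\rho)$, so it suffices to bound it on vectors tangent to the distance-$\rho$ tube around $\calC$. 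Any such vector is $J(\rho)$ for the Jacobi field $J$ along $\gamma$ coming from the variation of $\gamma$ through the orthogonal geodesics of length $\rho$ issuing from $\calC$; one then checks $dP_\calC|_w(J(\rho))=J(0)\in T_p\calC$, that $J(0)\perp\gamma'(0)$, and that the $\gamma$-normal part of $J'(0)$ is $S(J(0))$ with $S$ the outward shape operator of $\calC$ at $p$, so $\langle S(J(0)),J(0)\rangle\ge 0$ by convexity of $\calC$ (this pairing vanishes when $\calC$ is lower-dimensional, e.g.\ a geodesic). Writing $f=\|J^{\perp}\|$ for the norm of the $\gamma$-normal component of $J$, one has $f(0)=\|J(0)\|$, $f'(0)\ge 0$, and $f''\ge f$ wherever $f>0$ because the sectional curvature is $\le-1$; comparison with $\cosh$ then gives $f(\rho)\ge\|J(0)\|\cosh\rho$, i.e.\ $\|J(\rho)\|\ge\|dP_\calC|_w(J(\rho))\|\cosh\rho$, which is the claimed bound.

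Finally I would integrate along $[x,y]$. Parametrize it by arclength as $\gamma\colon[0,d(x,y)]\to\mtilde$ with $\gamma(0)=x$; since $d(\cdot,\calC)$ is $1$-Lipschitz, $d(\gamma(s),\calC)\ge d(x,\calC)-s\ge d(x,\calC)-d(x,y)$, so
\[
d\bigl(P_\calC(x),P_\calC(y)\bigr)\ \le\ \int_0^{d(x,y)}\bigl\|dP_\calC|_{\gamma(s)}(\gamma'(s))\bigr\|\,ds\ \le\ \int_0^{d(x,y)}2e^{-(d(x,\calC)-d(x,y))}\,ds\ =\ 2e^{d(x,y)}e^{-d(x,\calC)}d(x,y),
\]
and $e^{d(x,y)}\le e^{A}$ yields the statement with implied constant $2e^{A}$.

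The delicate point will be the differential bound: pinning down the right Jacobi field and, above all, checking that its initial data force $f'(0)\ge0$ — this is exactly where convexity of $\calC$ enters — together with the minor regularity bookkeeping for $P_\calC$ when $\partial\calC$ is rough. A more synthetic route would instead use the geodesic quadrilateral $x,\,P_\calC(x),\,P_\calC(y),\,y$, whose interior angles at $P_\calC(x)$ and $P_\calC(y)$ are $\ge\pi/2$, and bound $d(P_\calC(x),P_\calC(y))$ by $\cat(-1)$ comparison; but I would warn that this is trickier than it looks — cutting the quadrilateral along one diagonal and applying the hyperbolic law of cosines triangle-by-triangle only recovers $d(P_\calC(x),P_\calC(y))\prec_A d(x,y)$, and the two near-right angles must be used simultaneously (say against the $\HH^2$ identity $\cosh d(x,y)=\cosh d(x,\calC)\cosh d(y,\calC)\cosh d(P_\calC x,P_\calC y)-\sinh d(x,\calC)\sinh d(y,\calC)$ for a right-angled quadrilateral) in order to extract the factor $e^{-d(x,\calC)}$.
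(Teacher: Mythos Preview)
Your argument is correct (for $\calC$ with smooth boundary, and in particular for $\calC$ a geodesic, which is indeed the only case the paper uses). The Jacobi-field computation is sound: for the variation through orthogonal geodesics from $\calC$, one has $J(0)\in T_p\calC$ and the $T_p\calC$-component of $J'(0)$ is governed by the second fundamental form of $\calC$ in the direction $\gamma'(0)$, which is nonnegative by convexity (and vanishes for a geodesic), giving $f'(0)\ge0$; then $f''\ge f$ and comparison with $\cosh$ give the pointwise bound $\lVert dP_\calC\rVert\le 1/\cosh d(\cdot,\calC)$, and integration along $[x,y]$ finishes.

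This is, however, a genuinely different route from the paper's. The paper gives a purely synthetic $\cat(-1)$ argument that works for \emph{any} convex $\calC$ with no smoothness assumption: it builds two comparison triangles in $\HH^2$ for $\Delta(x,h_x,h_y)$ and $\Delta(y,x,h_y)$, glued along $[\bar x,\bar h_y]$, so that the comparison angles at $\bar h_x$ and $\bar h_y$ are $\ge\pi/2$; it then reflects $\bar x,\bar y$ across the geodesic through $\bar h_x,\bar h_y$ and applies the exponential convergence of geodesics with a common endpoint (Lemma~\ref{same endpoint converge}) to the segment $[\bar y, x_1]$, extracting the factor $e^{-d(x,\calC)}$. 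Your differential approach is cleaner once smoothness is granted and yields the explicit constant $2e^A$, whereas the paper's approach avoids both the regularity caveat and any Riemannian machinery beyond the $\cat(-1)$ inequality, at the cost of a slightly more involved picture in $\HH^2$. Your closing paragraph anticipates this synthetic alternative and correctly flags that both right angles must be used at once; the paper's reflection trick is precisely a device for doing that.
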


\begin{proof}
    Set $h_x \coloneqq P_\calC(x)$ and $h_y \coloneqq P_\calC(y)$. If $h_x = h_y$ there is nothing to prove, otherwise let $\Delta(\barx, \barh_x, \barh_y)$ and $\Delta(\bary, \barx, \barh_y)$ be comparison triangles in $\HH^2$ for $\Delta(x, h_x, h_y)$ and $\Delta(y, x, h_y)$, such that $\barh_x$ and $\bary$ are on different sides of $[\barx, \barh_y]$. By comparison, we see that $\angle (\barx, \barh_x, \barh_y)$ and $\angle(\bary, \barh_y, \barh_x)$ are both at least $\pi/2$. Let $\bar{G}$ be the geodesic in $\HH^2$ that passes through $\barh_x$ and $\barh_y$, and let $h_\barx$ and $h_\bary$ denote the foots of perpendiculars from $\barx$ and $\bary$ to $\bar{G}$ respectively. Let $x_1$ and $y_1$ denote the reflections of $\barx$ and $\bary$ across $\bar{G}$, and let $z$ denote the point that $[x_1, \bary]$ intersects $[h_\barx, h_\bary]$. Then we have
    \begin{align*}
        |d(\bary, z) - d(\bary, h_\bary)| \leq d(z, h_\bary) \leq d(h_\barx, h_\bary) \leq d(\barx, \bary) \leq A,
    \end{align*} 
    where the second to last inequality holds since $\HH^2$ is a $\text{CAT}(0)$ space. Since $|d(\bary, h_\bary) - d(\barx, h_\barx)| \leq d(\barx, \bary)$, the above implies $d(\bary, z) = T + \bfO_A(1)$ for $T \coloneqq d(\barx, h_\barx) = d(x, C)$. By Part (\ref{converge coarse}) of Lemma \ref{same endpoint converge}, there exists a point $z' \in [\barx, x_1]$ with $d(z, z') \prec_A e^{-T} d(\barx, \bary)$. Since $h_\barx$ is the foot of projection from $z$ to $[\barx, x_1]$, we have 
    \begin{align*}
        d(z, h_\barx) \leq d(z, z') \prec_A e^{-T} d(x, y)
    \end{align*}
    In a similar way, we obtain $d(z, h_\bary) \prec_A e^{-T} d(x, y)$, thus $d(h_\barx, h_\bary) \prec_A e^{-T} d(x, y)$.
\end{proof}

Recall that $-\clow^2$ is the lower bound on the curvature of $\mtilde$. The following lemma is only used in the proof of Lemma \ref{d-zero-infty}, in which we introduce a useful semi-distance on $T^1 \mtilde$ that is \holder{}-equivalent to $d_S$.

\begin{lemma} \label{lowbound-converge}
    Fix $A > 0$, and let $x, y, z \in \mtilde$ be such that $d(z, x) = d(z, y) \geq 1$ and $d(x, y) \leq A$. Let $x_1 \in [z, x]$ and $y_1 \in [z, y]$ be such that $d(z, x_1) = d(z, y_1) = 1$. Then we have 
    $$ d(x_1, y_1) \succ_{\clow, A} e^{-\clow d(z, x)} d(x, y).$$
\end{lemma}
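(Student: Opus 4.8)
The plan is to control the angle at $z$ subtended by $x$ and $y$, and then transfer that control to $d(x_1,y_1)$ by a comparison estimate near $z$. Write $T \coloneqq d(z,x) = d(z,y)$ and let $\theta \in [0,\pi]$ be the angle at $z$ between $[z,x]$ and $[z,y]$. Since $x_1 \in [z,x]$ and $y_1 \in [z,y]$, the segments $[z,x_1]$ and $[z,y_1]$ have the same tangent directions at $z$ as $[z,x]$ and $[z,y]$, so $\theta$ is also the angle at $z$ in $\Delta(z,x_1,y_1)$. I may assume $0 < d(x,y) < 2T$, since if $d(x,y) = 0$ then $x = y$, $x_1 = y_1$ and the claim is trivial, while $d(x,y) = 2T$ forces $x,z,y$ to be collinear with $\theta = \pi$, so that $d(x_1,y_1) = 2$ and the claim again holds.

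\emph{Step 1: a lower bound for $\theta$.} This is where the lower curvature bound $K \geq -\clow^2$ is used. Let $\Delta(\bar z, \bar x, \bar y)$ be a comparison triangle in the model plane $\HH^2_{-\clow^2}$ of constant curvature $-\clow^2$, with $d(\bar z, \bar x) = d(\bar z, \bar y) = T$ and $d(\bar x, \bar y) = d(x,y)$, and let $\bar\theta$ denote its angle at $\bar z$. By the angle comparison theorem for manifolds with sectional curvature bounded below (Toponogov), $\theta \geq \bar\theta$. Computing $\bar\theta$ from the law of cosines in curvature $-\clow^2$,
\[
\cosh\!\big(\clow\, d(x,y)\big) = \cosh^2(\clow T) - \sinh^2(\clow T)\cos\bar\theta ,
\]
and therefore, using $\cosh u - 1 \geq u^2/2$,
\[
2\sin^2(\bar\theta/2) = 1 - \cos\bar\theta = \frac{\cosh(\clow\, d(x,y)) - 1}{\sinh^2(\clow T)} \geq \frac{\clow^2\, d(x,y)^2}{2\sinh^2(\clow T)} .
\]
Since $\sin(\bar\theta/2) \leq \bar\theta/2$ and $\sinh(\clow T) \leq \tfrac12 e^{\clow T}$, this gives
\[
\theta \geq \bar\theta \geq \frac{\clow\, d(x,y)}{\sinh(\clow T)} \geq 2\clow\, e^{-\clow T}\, d(x,y) .
\]
(Alternatively one can bypass Toponogov and use the Rauch comparison theorem directly: parametrize the fan $\gamma_s(t) = \exp_z(t v_s)$, where $v_s$ traces the minimizing arc in $S(z)$ from the direction of $[z,x]$ to that of $[z,y]$, of length $\theta$; then $s \mapsto \gamma_s(T)$ joins $x$ to $y$ and at each point has speed at most $\clow^{-1}\sinh(\clow T)$ — the norm of a Jacobi field vanishing at $z$ with unit covariant derivative there, bounded using $K \geq -\clow^2$ — so $d(x,y) \leq \clow^{-1}\theta\sinh(\clow T)$.)

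\emph{Step 2: from $\theta$ to $d(x_1,y_1)$.} Here I only need that $\mtilde$ is $\cat(0)$, which holds since it is a Hadamard manifold. The geodesic segments $[z,x_1]$ and $[z,y_1]$ have length exactly $1$ — this is where the hypothesis $d(z,x) \geq 1$ enters, guaranteeing $x_1 \in [z,x]$ and $y_1 \in [z,y]$ — and they make angle $\theta$ at $z$, so the $\cat(0)$ law of cosines inequality yields
\[
d(x_1,y_1) \geq \sqrt{1 + 1 - 2\cos\theta} = 2\sin(\theta/2) \geq \frac{2}{\pi}\,\theta ,
\]
the last inequality because $\theta/2 \in [0,\pi/2]$.

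Combining the two steps, $d(x_1,y_1) \geq \tfrac{2}{\pi}\theta \geq \tfrac{4\clow}{\pi}\, e^{-\clow\, d(z,x)}\, d(x,y)$, with implied constant depending only on $\clow$ (hence in particular only on $\clow$ and $A$, although the hypothesis $d(x,y) \leq A$ turns out not to be needed). I do not expect any substantive obstacle here: the entire content is to invoke the two comparison theorems in the correct directions — the lower curvature bound to say that geodesics issuing from $z$ spread out \emph{at most} as fast as in constant curvature $-\clow^2$, and nonpositive curvature to say that with legs of the fixed length $1$ the distance $d(x_1,y_1)$ is \emph{at least} a definite multiple of the opening angle — together with the routine trigonometric estimates above; the only point requiring care is not to reverse either inequality.
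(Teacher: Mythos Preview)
Your proof is correct, and it takes a somewhat different route from the paper's. The paper invokes a single comparison: since the curvature is bounded below by $-\clow^2$, the Alexandrov--Toponogov inequality gives $d(x_1,y_1) \geq d(\bar x_1,\bar y_1)$ for the corresponding points in the comparison triangle in $\HH^2_\clow$, and then it computes $d(\bar x_1,\bar y_1)$ explicitly there via the hyperbolic sine rule applied to the isoceles triangle (using the perpendicular from $z$ to $[x,y]$ to split it). You instead split the estimate in two: Toponogov (or Rauch) from the lower curvature bound to control the opening angle $\theta$ at $z$, and then the $\cat(0)$ hinge inequality at radius~$1$ to convert $\theta$ into a lower bound on $d(x_1,y_1)$. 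Both arguments are clean; yours has the minor bonus of making explicit that the hypothesis $d(x,y)\leq A$ is not actually needed (your constant $\tfrac{4\clow}{\pi}$ depends only on $\clow$), whereas in the paper the $A$--dependence enters through linearizing $\sinh(\clow\, d(x,y)/2)$. The paper's approach, on the other hand, yields the matching upper bound $d(x_1,y_1) \prec_{\clow,A} e^{-\clow d(z,x)} d(x,y)$ in the model space for free, though only the lower bound is used.
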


\begin{proof}
    By comparison, we may assume that $\mtilde$ is equal to $\HH^2_\clow$, the hyperbolic plane with constant curvature $- \clow^2$. 
    Let $h$ be the foot of perpendicular from $z$ to $[x, y]$, and let $h_1$ be the intersection of this perpendicular with $[x_1, y_1]$. Denoting the angle between $[z, h]$ and $[z, y]$ at $z$ by $\alpha$, the hyperbolic sine formula in $\HH^2_\clow$ (see the second item of Theorem 7.11.2 of \cite{beardon} for $\clow = 1$), applied to triangles $\Delta(z, h, y)$ and $\Delta(z, h_1, y_1)$ gives
    \begin{align*}
        \sin \alpha = \frac{\sinh \clow \frac{d(x, y)}{2}}{\sinh \clow T}, \text{ and } \sin \alpha = \frac{\sinh \clow \frac{d(x_1, y_1)}{2}}{\sinh \clow},
    \end{align*}
    where $T \coloneqq d(z, y)$. Note that, fixing $B > 0$, we have $\sinh x \asymp_B x$ for $x \in [0, B]$ and $\sinh x \asymp_B e^x$ for $x \in [B, +\infty)$. Applying these to the above equalities, we obtain
    $$ d(x_1, y_1) \asymp_{\clow, A} e^{-\clow T} d(x, y).$$
\end{proof}

\begin{lemma} \label{d-zero-infty}
    The semi-distance $d_{0, +\infty}$ on $T^1 \mtilde$, defined by
    $$d_{0, +\infty} (u, v) \coloneqq \max\{d(\pi(u), \pi(v)),\, d_o(u^+, v^+)  \}$$
    is \holder{}-equivalent to Sassaki distance.
\end{lemma}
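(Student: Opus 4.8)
The plan is to show that the identity map from $(T^1\mtilde, d_S)$ to $(T^1\mtilde, d_{0,+\infty})$ and its inverse are both \holder{}-continuous on compact sets; since $d_{0,+\infty}$ is manifestly a semi-distance (it is a max of two pseudo-distances), this gives the claimed \holder{}-equivalence. Because both metrics are $\Gamma$-invariant and $\Gamma$ acts cocompactly, it suffices to establish the two \holder{} bounds for $u, v$ ranging over a fixed compact fundamental domain $K \subset T^1\mtilde$ (together with a uniform bound on $d(\pi(u),\pi(v))$, which we may assume is at most some $A$, as otherwise both distances are comparable to constants).

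First I would prove $d_{0,+\infty} \prec_K d_S^\alpha$. The basepoint coordinate is easy: $d(\pi(u),\pi(v)) \leq d_S(u,v)$ since $\pi$ is $1$-Lipschitz. For the visual coordinate, I want $d_o(u^+, v^+) \prec_K d_S(u,v)^\alpha$. Fix a large time $T_0$. By Lemma \ref{gt-holder}, $\calG_{T_0}$ is Lipschitz, so $d_S(\calG_{T_0}u, \calG_{T_0}v) \prec d_S(u,v)$, hence $d(\pi(u_{T_0}), \pi(v_{T_0})) \prec d_S(u,v)$. Now compare the geodesic rays $[\pi(u), u^+)$ and $[\pi(v), v^+)$: after flowing forward they pass near each other at time $T_0$ (within distance $\prec d_S(u,v) + d(\pi(u),\pi(v))$, and the latter is $\prec d_S(u,v)$ too), and then the lower curvature bound forces their endpoints to be close in visual distance — quantitatively, using Lemma \ref{lowbound-converge} (run in reverse: two geodesic rays that are close at a far time must emanate with a proportionally exponentially smaller angular separation, so the visual distance at the basepoint, and hence at $o$, is controlled). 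This is where the exponent $\alpha$, depending on $\clow$, enters. The change-of-basepoint estimate $e^{-d(x,y)} \le d_x/d_y \le e^{d(x,y)}$ lets me move from $d_{\pi(u)}$ to $d_o$ at the cost of a bounded multiplicative constant over $K$.

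Conversely I would prove $d_S \prec_K d_{0,+\infty}^\alpha$. Suppose $d(\pi(u),\pi(v))$ and $d_o(u^+,v^+)$ are both small. A unit vector $u$ is determined by the pair $(\pi(u), u^+)$, so I reconstruct $v$ from $u$ through two moves. Translate $\pi(v)$ back to $\pi(u)$: the vector at $\pi(u)$ pointing to $v^+$ differs from $v$ by a Sassaki amount $\prec d(\pi(u),\pi(v))$ (this is continuity of the parallel-type correspondence on the compact set, or directly: move the footpoint along a short geodesic and the forward endpoint moves a bounded amount, controlled by Lemma \ref{same endpoint converge}(\ref{one in boundary coarse})). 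Then at the fixed basepoint $\pi(u)$, the map $S(\pi(u)) \to \bdry$, $w \mapsto w^+$, is a bi-\holder{} homeomorphism onto its image with constants uniform over $K$ (its inverse is $P^1_{\pi(u)}$ restricted to the boundary, which is continuous as noted in the background, and \holder{} on compacta — here I would invoke Part (\ref{one in boundary}) of Lemma \ref{same endpoint converge} once more, or a direct comparison-geometry estimate), so $d_S(u, \text{(vector at }\pi(u)\text{ toward }v^+)) \prec d_{\pi(u)}(u^+,v^+)^\alpha \prec d_o(u^+,v^+)^\alpha$. Combining the two moves by the triangle inequality gives the bound.

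The main obstacle is the quantitative angular-separation estimate in the forward direction: turning "two geodesic rays are within $\epsilon$ of each other at a far time $T_0$" into "their endpoints at infinity are within $C\epsilon^\alpha$ in visual distance." In a $\cat(-1)$ space one gets exponential convergence (exponent $1$), but with only an upper curvature bound of $-1$ the honest estimate needs the lower bound $-\clow^2$, which is exactly the content of Lemma \ref{lowbound-converge}, and yields a genuine H\"older exponent $\alpha$ rather than a Lipschitz bound. Care is needed that $T_0$ can be chosen once and for all (depending only on $\clow$ and $A$) so that all implied constants are uniform.
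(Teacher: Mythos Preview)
Your proposal is correct and follows essentially the same route as the paper. Both arguments hinge on introducing an auxiliary vector sharing one coordinate with $u$ and the other with $v$ (your ``two moves'' are exactly the paper's vector $w \in S(\pi(v))$ with $w^+ = u^+$), and then using the upper curvature bound (Lemma~\ref{same endpoint converge}) for one inequality and the lower curvature bound (Lemma~\ref{lowbound-converge}) for the other.

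Two small points of comparison. First, the paper works through the intermediate distance $d_{0,1}(u,v) \coloneqq \max\{d(\pi(u),\pi(v)),\, d(\pi(u_1),\pi(v_1))\}$, which is already Lipschitz-equivalent to $d_S$; this makes the computation more concrete, since everything reduces to comparing four points in $\mtilde$. Second, your parameter $T_0$ is unnecessary: the paper simply takes $T_0 = 1$, and your worry that ``care is needed that $T_0$ can be chosen once and for all'' dissolves---no largeness is required, because Lemma~\ref{lowbound-converge} already gives the estimate $e^{-\clow T} \prec d(y_1, y'_1)$ at unit time, yielding the H\"older exponent $1/\clow$ directly. Your first direction would also be cleaner if you made the auxiliary vector explicit there too, since Lemma~\ref{lowbound-converge} requires a common apex $z$, which the rays from $\pi(u)$ and $\pi(v)$ do not share.
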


\begin{proof}
    We show that $d_{0, +\infty}$ is \holder{}-equivalent to $d_{0, 1}$, the Lemma follows since Sassaki metric is Lipschitz-equivalent to $d_{0, 1}$.
    Fix $R > 0$, let $u, v \in \pi^{-1} (B_R(o))$, and set
    $$x_0 \coloneqq \pi(u),\, x_1 \coloneqq \pi(\calG_1(u)),\, y_0 \coloneqq \pi(v),\, y_1 \coloneqq \pi(\calG_1(v)),\, \zeta \coloneqq u^+,\, \eta \coloneqq v^+.$$ 
    Let $w \in S(y_0)$ be such that $w^+ = \zeta$ and let $y'_1 \coloneqq \pi(\calG_1(w))$. Then a straightforward argument gives 
    $$d(x_1, y'_1) \asymp_{\clow} d(x_0, y_0).$$
    Let $T > 0$ be such that $d_{y_0}(\zeta, \eta) = e^{-T}$, thus $d(\pi(w_T), \pi(v_T)) \asymp_{\clow} 1$. By Lemma \ref{lowbound-converge} and Part (\ref{all in mtilde}) of Lemma \ref{same endpoint converge} we have
    $$e^{-\clow T} \prec d(y_1, y'_1) \prec e^{-T}.$$
    By triangle inequality we have
    $$d(x_1, y_1) \leq d(x_1, y'_1) + d(y'_1, y_1) \prec d(x_0, y_0) + e^{-T} = d(x_0, y_0) + d_{y_0}(\zeta, \eta).$$ 
    Since $d_o(\zeta, \eta) \asymp_R d_{y_0}(\zeta, \eta)$, the above implies $d(x_1, y_1) \prec_R d_{0, +\infty} (u, v)$, hence $d_{0, 1}(u, v) \prec_R d_{0, + \infty} (u, v)$.

    On the other hand, 
    $$e^{-\clow T} \prec d(y_1, y'_1) \leq d(y_1, x_1) + d(x_1, y'_1) \prec_\clow d(y_1, x_1) + d(x_0, y_0),$$
    thus
    $$d_o(\zeta, \eta) \prec_R d_{y_0}(\zeta, \eta) = e^{-T} \prec_\clow (d(y_1, x_1) + d(x_0, y_0))^{1/\clow}.$$
    This gives $d_o(\zeta, \eta) \prec_{R} (d_{0, 1}(u, v))^{1/\clow}$, hence $d_{0, +\infty} (u, v) \prec_{R} (d_{0, 1}(u, v))^{1/\clow}$.
\end{proof}

The following corollary is immediate.

\begin{corollary} \label{plus-holder}
    The map from $T^1 \mtilde$ to $\bdry$ given by $u \mapsto u^+$ is \holder{}-continuous.
\end{corollary}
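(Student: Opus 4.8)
The plan is to deduce the statement immediately from Lemma~\ref{d-zero-infty} together with the fact, recorded at the end of Section~\ref{background}, that a composition of \holder{}-continuous maps between metric spaces is again \holder{}-continuous. The first step is to observe that the evaluation map $e\from (T^1\mtilde, d_{0,+\infty}) \to (\bdry, d_o)$ sending $u$ to $u^+$ is $1$-Lipschitz: directly from the definition of the semi-distance $d_{0,+\infty}$ we have
\begin{align*}
    d_o(u^+, v^+) \leq \max\{d(\pi(u), \pi(v)),\, d_o(u^+, v^+)\} = d_{0,+\infty}(u, v)
\end{align*}
for all $u, v \in T^1\mtilde$, and a $1$-Lipschitz map is in particular \holder{}-continuous.

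The second step uses Lemma~\ref{d-zero-infty}: since $d_{0,+\infty}$ is \holder{}-equivalent to the Sassaki distance $d_S$, the identity map $(T^1\mtilde, d_S) \to (T^1\mtilde, d_{0,+\infty})$ is \holder{}-continuous. The map $u \mapsto u^+$ from $(T^1\mtilde, d_S)$ to $(\bdry, d_o)$ is precisely the composition of this identity map followed by $e$, hence it is a composition of two \holder{}-continuous maps and therefore \holder{}-continuous, which is the assertion of the corollary. There is no genuine obstacle here; the only bookkeeping point worth noting is that \holder{}-continuity in this paper is a local notion (one exponent and constant per compact set), but since the identity map above is a homeomorphism, a compact subset of $(T^1\mtilde, d_S)$ is carried to a compact subset of $(T^1\mtilde, d_{0,+\infty})$, so the local estimates compose without difficulty.
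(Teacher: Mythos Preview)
Your argument is correct and is exactly the intended one: the paper records the corollary as immediate from Lemma~\ref{d-zero-infty}, and what you have written is simply the spelled-out version of that immediacy (the map $u\mapsto u^+$ is $1$-Lipschitz for $d_{0,+\infty}$ by definition, and $d_{0,+\infty}$ is \holder{}-equivalent to $d_S$).
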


\begin{lemma} \label{close endpoints converge middle}
    Fix $A, B > 0$ and let $x, x', y, y' \in \mtilde$ be such that $d(x, x')$ and $ d(y, y')$ are both at most $A$. Let $z \in [x, y]$ be such that the distance between $z$ and the midpoint of $[x, y]$ is at most $B$.
    Then there exists a point $z' \in [x', y']$ such that 
    $$d(z, z') = \bfO_{A, B} (e^{-d(x, y)/2}).$$
\end{lemma}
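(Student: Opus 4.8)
The plan is to reduce the statement to Part (\ref{converge coarse}) of Lemma \ref{same endpoint converge} applied twice, once from each endpoint, and then patch the two resulting approximations together along $[x',y']$. First I would set $L \coloneqq d(x,y)$ and pick $z \in [x,y]$ with $d(z, m) \leq B$ where $m$ is the midpoint; note that this forces $d(x,z) = L/2 + \bfO_B(1)$ and $d(y,z) = L/2 + \bfO_B(1)$, so in particular $d(x,z)$ and $d(y,z)$ are both at least $L/2 - B$. The idea is that $z$ is "deep" inside the geodesic as seen from either endpoint, which is exactly the hypothesis under which the projection/convergence estimates give an exponentially small bound.

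Next I would apply Part (\ref{converge coarse}) of Lemma \ref{same endpoint converge} with the roles $z \leftrightarrow x'$ (the apex), and the two nearby points being $x$ and... wait, more carefully: that lemma says if $d(x,y)\le A$ and $x' \in [z,x]$, there is $y' \in [z,y]$ with $d(x',y') \prec_A e^{-d(x',x)} d(x,y)$. I want to use it with apex $x'$, and the pair of close points $(x, x')$... no. Let me instead phrase it as: consider the geodesic $[x', y']$ and compare it to $[x,y]$. Apply Lemma \ref{same endpoint converge}(\ref{converge coarse}) with apex point $x'$, close points $y$ and $y'$ (distance $\le A$), and the point on $[x', y]$... this is getting tangled; the clean way is to do it in two steps. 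Step 1: compare $[x,y]$ with $[x',y]$ (same endpoint $y$, other endpoints within $A$): by Lemma \ref{same endpoint converge}(\ref{converge coarse}) applied with apex $y$, there is a point $z_1 \in [x', y]$ with $d(z, z_1) \prec_A e^{-d(z,y)} d(x,x') \prec_{A,B} e^{-L/2}$, using $d(z,y) = L/2 + \bfO_B(1)$. Step 2: compare $[x',y]$ with $[x',y']$ (same endpoint $x'$, other endpoints within $A$): again by Lemma \ref{same endpoint converge}(\ref{converge coarse}) applied with apex $x'$, there is a point $z' \in [x', y']$ with $d(z_1, z') \prec_A e^{-d(z_1, x')} d(y,y')$. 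Here I need $d(z_1, x') \geq L/2 + \bfO_{A,B}(1)$, which follows since $d(z_1,x') \geq d(z,x) - d(z,z_1) - d(x,x') = L/2 + \bfO_{A,B}(1)$ once $L$ is large (and for bounded $L$ the statement is trivial since all distances are $\bfO(A)$ and $e^{-L/2} \asymp 1$). Then $d(z_1, z') \prec_{A,B} e^{-L/2}$.

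Combining via the triangle inequality, $d(z, z') \leq d(z, z_1) + d(z_1, z') = \bfO_{A,B}(e^{-L/2}) = \bfO_{A,B}(e^{-d(x,y)/2})$, with $z' \in [x',y']$, which is the claim. I would also record at the start the trivial case: if $d(x,y) \leq 2(A+B)+10$, say, then taking $z'$ to be the point of $[x',y']$ closest to $z$ gives $d(z,z') \leq d(z,x') \leq d(z,x) + A \leq \bfO_{A,B}(1) \asymp e^{-d(x,y)/2}$, so we may freely assume $L$ is as large as needed for the inequalities on $d(z_1,x')$ etc. to hold.

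The main obstacle I anticipate is purely bookkeeping: making sure that at each of the two applications of Lemma \ref{same endpoint converge}(\ref{converge coarse}) the "depth" of the intermediate point from the relevant apex really is $\geq L/2 - \bfO_{A,B}(1)$ — in the second step this requires knowing $z_1$ has not drifted more than $\bfO_{A,B}(e^{-L/2})$ (hence $\bfO_{A,B}(1)$) away from $z$, which Step 1 provides. There is no serious geometry beyond the two invocations of the earlier lemma; the only subtlety is that the hypothesis "$z$ near the midpoint" is used precisely to guarantee both depths are $\approx L/2$, so that the two exponential gains $e^{-d(z,y)}$ and $e^{-d(z_1,x')}$ are each $\prec e^{-L/2}$.
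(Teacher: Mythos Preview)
Your proposal is correct and follows essentially the same approach as the paper: two applications of Lemma~\ref{same endpoint converge}(\ref{converge coarse}), changing one endpoint at a time, then the triangle inequality. The only cosmetic difference is the order in which the endpoints are swapped (the paper first replaces $y$ by $y'$ keeping $x$ fixed, then $x$ by $x'$ keeping $y'$ fixed, whereas you do the opposite), and the paper is terser about the bookkeeping on the depth of $z_1$ that you spell out.
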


\begin{proof}
    Let $T \coloneqq d(x, y)$, and, without loss of generality, assume $T$ is large enough.
    Since $[x, y]$ and $[x, y']$ get exponentially close to each other (Lemma \ref{same endpoint converge}), there exists $z_1 \in [x, y']$ with 
    $$d(z, z_1) = \bfO_{A, B} (e^{-d(x, y)/2}).$$
    Similarly, since $[y', x]$ and $[y', x']$ get exponentially close to each other, there exists $z' \in [y', x']$ with 
    $$d(z_1, z') = \bfO (e^{-d(x, y)/2}).$$
\end{proof}

The following lemma plays a major role in the proofs of Lemma \ref{step-one} and Lemma \ref{step-two}.

\begin{lemma} \label{distance approximates Busemann}
    Fix $R>0$, and let $x,\, y,\, z$, and $\zeta$ be such that $d(x, y) \leq R$ and $z \in [x, \zeta)$. Then
    $$\beta(\zeta, x, y) = d(z, x) - d(z, y) + \bfO_R(e^{-d(x, z)}).$$
\end{lemma}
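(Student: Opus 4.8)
The plan is to reduce the statement to the cocycle identity for the Busemann function together with the exponential convergence of two geodesic rays sharing a common endpoint at infinity (Lemma~\ref{same endpoint converge}). First I would record two elementary facts. Since $z \in [x, \zeta)$, taking the geodesic ray $t \mapsto \zeta_t$ that starts at $x$ and passes through $z$ gives, for all large $t$, $d(\zeta_t, x) = t$ and $d(\zeta_t, z) = t - d(x,z)$, whence $\beta(\zeta, x, z) = d(x,z)$; and by the cocycle property listed in Section~\ref{background}, $\beta(\zeta, x, y) = \beta(\zeta, x, z) + \beta(\zeta, z, y) = d(x,z) + \beta(\zeta, z, y)$. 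So the assertion is equivalent to the estimate $\beta(\zeta, z, y) = -d(z,y) + \bfO_R(e^{-d(x,z)})$.

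Next I would produce a point on the ray $[y, \zeta)$ that is exponentially close to $z$. Apply part~(\ref{one in boundary coarse}) of Lemma~\ref{same endpoint converge} with the boundary point $\zeta$, the finite points $x$ and $y$ (which satisfy $d(x,y) \le R$), and the point $z \in [x, \zeta)$; reading the orientation convention correctly, the ``$[z,x]$'' there is the ray $[x,\zeta)$, so the conclusion is the existence of $z' \in [y, \zeta)$ with $d(z, z') \prec_R e^{-d(x,z)} d(x,y) = \bfO_R(e^{-d(x,z)})$. Since $z'$ lies on $[y, \zeta)$, the same unit-speed computation as above gives $\beta(\zeta, y, z') = d(y, z')$, hence $\beta(\zeta, z', y) = -d(y, z')$; and the triangle inequality gives $|d(y, z') - d(y, z)| \le d(z, z') = \bfO_R(e^{-d(x,z)})$.

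Finally I would assemble the pieces: applying the cocycle property once more, $\beta(\zeta, z, y) = \beta(\zeta, z, z') + \beta(\zeta, z', y)$, and bounding $|\beta(\zeta, z, z')| \le d(z, z') = \bfO_R(e^{-d(x,z)})$ and $\beta(\zeta, z', y) = -d(y,z') = -d(y,z) + \bfO_R(e^{-d(x,z)})$ yields $\beta(\zeta, z, y) = -d(z,y) + \bfO_R(e^{-d(x,z)})$; combining with $\beta(\zeta, x, y) = d(x,z) + \beta(\zeta, z, y)$ gives exactly the claim. I do not expect a real obstacle here: the only points requiring care are matching the orientation conventions in Lemma~\ref{same endpoint converge}(\ref{one in boundary coarse}) so that the produced point lands on $[y, \zeta)$, and noting that when $d(x,z)$ is small the asserted estimate is in any case trivial from $|\beta(\zeta, x, y)| \le d(x,y) \le R$ and $d(z,y) \le d(z,x) + R$. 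Everything else is the cocycle identity and the triangle inequality.
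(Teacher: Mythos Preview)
Your proof is correct and rests on the same key ingredient as the paper's, namely the exponential convergence of rays with a common endpoint at infinity (Lemma~\ref{same endpoint converge}). The organization differs slightly: the paper first slides $y$ along $[y,\zeta)$ to a point $y'$ on the same horosphere as $x$ (so that $\beta(\zeta,x,y')=0$), then invokes part~(\ref{one in boundary}) of Lemma~\ref{same endpoint converge} to find a point on $[y',\zeta)$ exponentially close to $z$, and reads off the estimate from collinearity of $y,y',y'_T$; you instead use the cocycle identity up front to reduce to $\beta(\zeta,z,y)=-d(z,y)+\bfO_R(e^{-d(x,z)})$ and appeal directly to part~(\ref{one in boundary coarse}). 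Your route is arguably a touch cleaner since it avoids the horospherical normalization step, but the two arguments are essentially the same.
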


\begin{proof}
    Without loss of generality assume $\beta(\zeta, y, x) > 0$. If $y' \in [y, \zeta)$ is such that $d(y, y') = \beta(\zeta, y, x)$, then $\beta(\zeta, x, y') = 0$. We have
    $$d(x, y') \leq d(x, y) + d(y, y') \leq R + \beta(\zeta, y, x) \leq R + d(x, y) \leq 2R.$$
    Let $T \coloneqq d(x, z)$, and choose $y'_T \in [y', \zeta)$ such that $d(y', y'_T) = T$. Lemma \ref{same endpoint converge} implies $d(z, y'_T) = \bfO_R (e^{-T})$, so we have
    \begin{align*}
        d(z, x) - d(z, y) = d(z, x) - d(y'_T, y) + \bfO_R({e^{-T}})
        = \beta(\zeta, y, x) + \bfO_R(e^{-T}).
    \end{align*}
\end{proof}

\begin{lemma} \label{buseman lip-holder}
    The Busemann cocycle $\beta(\param, \param, \param)$ is \hold{\frac{1}{2}} in its first variable and \lip{1} in its second and third variables.
\end{lemma}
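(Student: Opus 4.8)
The plan is to treat the three variables separately, handling the easy Lipschitz estimates in the second and third variables first, then the Hölder estimate in the boundary variable. For the second and third variables: the elementary property $|\beta(\zeta, x, y)| \leq d(x, y)$ listed in the background section gives exactly the $1$-Lipschitz bound. Indeed, fixing $\zeta$ and $y$, the cocycle property yields $\beta(\zeta, x, y) - \beta(\zeta, x', y) = \beta(\zeta, x, x')$, which has absolute value at most $d(x, x')$; and similarly in the third variable using $\beta(\zeta, y, x) = -\beta(\zeta, x, y)$. So both of these are immediate and require no further work.

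The substantive point is Hölder continuity in the first variable on compact sets. Fix a compact set $K \subseteq \bdry \times \mtilde \times \mtilde$; after enlarging, we may assume $K = \Lambda \times B_R(o) \times B_R(o)$ for some compact $\Lambda \subseteq \bdry$ and some $R > 0$, and it suffices to bound $|\beta(\zeta, x, y) - \beta(\eta, x, y)|$ in terms of $d_o(\zeta, \eta)$ with constants depending on $R$. First I would reduce to the case $x = o$: by the cocycle relation in the second variable (already shown $1$-Lipschitz there, uniformly), it is enough to control $|\beta(\zeta, o, y) - \beta(\eta, o, y)|$ for $y \in B_R(o)$, and then by the cocycle relation in the third variable similarly reduce to comparing $\beta(\zeta, o, y)$ and $\beta(\eta, o, y)$ for a single $y$. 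The key geometric input is Lemma \ref{distance approximates Busemann}: choosing $z \in [o, \zeta)$ and $z' \in [o, \eta)$ at a common large distance $T = d(o, z) = d(o, z')$ from $o$, we get $\beta(\zeta, o, y) = d(z, o) - d(z, y) + \bfO_R(e^{-T})$ and $\beta(\eta, o, y) = d(z', o) - d(z', y) + \bfO_R(e^{-T})$, so their difference is $d(z', y) - d(z, y) + \bfO_R(e^{-T})$, which is at most $d(z, z') + \bfO_R(e^{-T})$. Now I invoke the definition of the visual distance: for $\zeta, \eta$ with $d_o(\zeta, \eta) = e^{-T}$ and $u, v \in S(o)$ pointing to $\zeta, \eta$, the background section records $d(\pi(u_T), \pi(v_T)) \asymp_{\clow} 1$, i.e.\ $d(z, z') \asymp_\clow 1$ at the scale $T = -\log d_o(\zeta, \eta)$; more generally, choosing $T$ slightly larger we get $d(z, z') \prec_\clow e^{-(T - T_0)}$ where $e^{-T_0} = d_o(\zeta, \eta)$, or one extracts directly that $d(z,z') \prec_\clow d_o(\zeta,\eta)^{1/2} \cdot e^{T/2}$ type bounds.

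The clean way to close this is: set $T_0 \coloneqq -\log d_o(\zeta, \eta)$ (so $d_o(\zeta, \eta) = e^{-T_0}$), and take $T \coloneqq T_0/2$. Then on one hand $\bfO_R(e^{-T}) = \bfO_R(d_o(\zeta,\eta)^{1/2})$; on the other hand, by Part (\ref{all in mtilde}) of Lemma \ref{same endpoint converge} applied along the geodesics from $o$ (or directly from the visual-distance estimate $d(\pi(u_{T_0}), \pi(v_{T_0})) \asymp_\clow 1$ together with $d(z,z') \prec e^{-(T_0 - T)} d(\pi(u_{T_0}),\pi(v_{T_0})) \asymp_\clow e^{-T} = d_o(\zeta,\eta)^{1/2}$), we obtain $d(z, z') \prec_\clow d_o(\zeta, \eta)^{1/2}$. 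Combining, $|\beta(\zeta, o, y) - \beta(\eta, o, y)| \prec_{R, \clow} d_o(\zeta, \eta)^{1/2}$, which is the desired $\frac12$-Hölder bound on $K$. The main obstacle is purely bookkeeping: making sure the basepoint reductions are uniform over $B_R(o)$ and that the choice $T = T_0/2$ correctly balances the error term $e^{-T}$ from Lemma \ref{distance approximates Busemann} against the convergence rate $e^{-(T_0 - T)}$ of the two rays — no deep idea is needed beyond Lemma \ref{distance approximates Busemann}, Lemma \ref{same endpoint converge}, and the definition of $d_o$.
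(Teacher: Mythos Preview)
Your proposal is correct and follows essentially the same approach as the paper: the Lipschitz bounds in the second and third variables come from the cocycle identity exactly as you describe, and for the first variable both you and the paper use Lemma~\ref{distance approximates Busemann} at points on the two rays at distance $T/2$ (where $e^{-T}$ is the visual distance), combined with Lemma~\ref{same endpoint converge} to bound the separation of those points by $\bfO(e^{-T/2})$. The only cosmetic difference is that you first reduce to $x=o$ via the cocycle, whereas the paper works directly with $x\in B_R(o)$ using $d_x$ and converts to $d_o$ at the end.
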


\begin{proof}
    For $\zeta \in \bdry$ and $x,\, x',\, y \in \mtilde$, we have
    \begin{align*}
        |\beta(\zeta, x', y) - \beta(\zeta, x, y)| = |\beta(\zeta, x', x)| \leq d(x, x').
    \end{align*}
    This proves that $\beta$ is \lip{1} in its second variable. In a similar way, $\beta$ is \lip{1} in its third variable.
    
    To prove \holder{}-continuity in the first variable,
    let $R > 0$ be arbitrary, and fix $x, y \in \calB_R(o)$.
    Let $\zeta, \eta \in \bdry$ and let $T$ be such that $d_x(\zeta, \eta) = e^{-T}$. Choose $x_\zeta, x'_\zeta \in [x, \zeta)$ and $ x_\eta, x'_\eta \in [x, \eta)$ such that $d(x, x_\zeta) = d(x, x_\eta) = T$, and $d(x, x'_\zeta) = d(x, x'_\eta) = T/2$. Since $d(x_\zeta, x_\eta) \asymp 1$, Lemma \ref{same endpoint converge} implies 
    $d(x'_\zeta, x'_\eta)= \bfO(e^{-T/2})$. By Lemma \ref{distance approximates Busemann} we have
    \begin{align*}
        \beta(\zeta, x, y) &= d(x'_\zeta, x) - d(x'_\zeta, y) + \bfO_R(e^{-T/2});\\
        \beta(\eta, x, y) &= d(x'_\eta, x) - d(x'_\eta, y) + \bfO_R(e^{-T/2}).
    \end{align*} 
    Thus,
    \begin{align*}
        |\beta(\zeta, x, y) - \beta(\eta ,x , y)| & \leq 
        |d(x'_\zeta, x) - d(x'_\eta, x)| + |d(x'_\zeta, y) - d(x'_\eta, y) | + \bfO_R({e^{-T/2}}) \\
        & \leq 2 d(x'_\zeta, x'_\eta) + \bfO_R({e^{-T/2}})= \bfO_R({e^{-T/2}}) 
        = \bfO_R (d_o(\zeta, \eta)^{\frac{1}{2}}),
    \end{align*}
    where to obtain the last equality we used the fact that $d_x(\zeta, \eta) \asymp_R d_o(\zeta, \eta)$.
\end{proof}

The following lemma is only used in the proof of Lemma \ref{proj. is holder}, in which we prove the \holder{}-continuity of a function that appears in the proof of Lemma \ref{step-one}.

\begin{lemma} \label{perp. increase length}
    Fix $A > 0$, and let $G$ be a geodesic in $\mtilde$ and $x$ a point in $\mtilde$ such that $d(x, G) \leq A$. Set $h \coloneqq P_G(x)$ and assume $h' \in G$ is such that $d(h, h') \leq 1$. Then we have 
    $$d(x, h') - d(x, h) \succ_A d(h, h')^2.$$
\end{lemma}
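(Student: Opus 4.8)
The plan is to reduce to a computation in the comparison plane $\HH^2_\clow$ (equivalently, in $\HH^2$ after rescaling), where everything can be made explicit with hyperbolic trigonometry. Since $G$ is a geodesic and $h = P_G(x)$, the segment $[x,h]$ meets $G$ orthogonally at $h$. By the CAT$(-1)$ (indeed CAT$(0)$) comparison inequality, if $\Delta(\bar x,\bar h,\bar h')$ is a comparison triangle in $\HH^2_\clow$ for $\Delta(x,h,h')$, then $d(x,h')\ge d(\bar x,\bar h')$, while $d(x,h)=d(\bar x,\bar h)$ and $d(h,h')=d(\bar h,\bar h')$; so it suffices to prove the claimed lower bound for the comparison triangle, i.e.\ we may assume $\mtilde = \HH^2_\clow$ outright. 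First I would observe that the angle $\angle(\bar h)$ at $\bar h$ in this comparison triangle is at least $\pi/2$: this follows from Alexandrov's angle comparison, since in $\mtilde$ the angle at $h$ between $[h,x]$ and $G$ is exactly $\pi/2$ and $[h,h']\subset G$. Enlarging an angle only increases the opposite side, so after replacing $\angle(\bar h)$ by exactly $\pi/2$ we still only decrease $d(\bar x,\bar h')$, hence it is enough to treat a right triangle with legs $a := d(x,h)$ (fixed, with $a\le A$) and $t := d(h,h')\le 1$, and hypotenuse $c(t) := d(x,h')$.

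Next I would write down the hyperbolic Pythagorean theorem in $\HH^2_\clow$: $\cosh(\clow\, c(t)) = \cosh(\clow a)\cosh(\clow t)$. Set $f(t) := c(t) - c(0) = c(t) - a$; I want $f(t)\succ_A t^2$ for $t\in[0,1]$. From the identity, $\cosh(\clow c(t))$ is a smooth function of $t$ with $f(0)=0$ and $f'(0)=0$ (the derivative of $\cosh(\clow t)$ vanishes at $t=0$), so by Taylor expansion $f(t) = \tfrac12 f''(0)\,t^2 + o(t^2)$, and a direct computation gives $f''(0) = \clow\,\tanh(\clow a)\cdot\big(\!\sinh(\clow a)/\sinh(\clow c(0))\big)\cdots$ — in any case $f''(0) > 0$ whenever $a > 0$. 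Rather than chase the exact constant, I would argue more robustly: differentiating $\cosh(\clow c)\,=\,\cosh(\clow a)\cosh(\clow t)$ gives $\sinh(\clow c)\,c'(t) = \cosh(\clow a)\sinh(\clow t)$, so $c'(t) = \cosh(\clow a)\sinh(\clow t)/\sinh(\clow c(t))$. On $t\in[0,1]$ and with $a$ ranging over $[0,A]$ one has $\sinh(\clow t)\asymp_{\clow} t$, $\sinh(\clow c(t))\asymp_{\clow,A} 1$ (since $c(t)\in[a, a + \text{const}]$ is bounded, but also bounded below only if $a$ is — see below), and $\cosh(\clow a)\ge 1$; hence $c'(t)\succ_{\clow,A} t \cdot \cosh(\clow a)$, and integrating from $0$ to $t$ yields $f(t) = c(t)-a \succ_{\clow,A} t^2\cosh(\clow a) \ge t^2$.

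The one genuine subtlety — and the step I expect to be the main obstacle — is the degenerate case where $d(x,h)$ is small, i.e.\ $a\to 0$: then $c(t)\to t$ and the estimate $\sinh(\clow c(t))\asymp 1$ fails, so one must be careful that the implicit constant does not blow up. Here I would note that when $a\le t$ we are in a different regime: from $\cosh(\clow c)=\cosh(\clow a)\cosh(\clow t)\ge \cosh(\clow t)$ we get $c(t)\ge t$, and more precisely $\cosh(\clow c) - \cosh(\clow t) = (\cosh(\clow a)-1)\cosh(\clow t)\asymp_{\clow} a^2$, while the left side is $\asymp_{\clow,A}(c-t)(c+t)\asymp_{\clow,A}(c-t)\cdot t$ for $t$ bounded away from $0$ (and $\asymp (c-t)\cdot t$ also near $0$ up to constants since $\cosh u - \cosh v \asymp (u-v)(u+v)$ for $u,v$ in a bounded set), giving $c(t) - t \asymp_{\clow,A} a^2/t$. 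Combining the two regimes $a\ge t$ and $a\le t$ — in the first, $c(t) - a\succ_{\clow,A} t^2$ from the argument above (with $\sinh(\clow c(t))\le\sinh(\clow(a+1))\asymp_{\clow,A} e^{\clow a}$ balanced against $\cosh(\clow a)\asymp e^{\clow a}$, so the $a$-dependence cancels); in the second, one checks $c(t)-a \ge c(t) - t \asymp a^2/t \succ$... hmm, this does not directly give $t^2$ when $a$ is tiny. So in fact when both $a$ and $t$ are small the correct lower bound must come from $c(t)-a$ where $c(t)\approx\sqrt{a^2+t^2}$ (the Euclidean-like regime), and $\sqrt{a^2+t^2}-a = t^2/(\sqrt{a^2+t^2}+a)\ge t^2/(2t+\cdots)$ — which degrades to $\succ t$, not $t^2$. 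Let me reconcile: when $a$ is bounded below by a fixed positive constant (which is the only case that matters for the application, where $d(x,G)$ and hence $d(x,h)$ is comparable to a fixed scale), $\cosh(\clow a)-1\succ_{\clow} 1$ and the clean computation of the middle paragraph applies with all constants depending only on $\clow$ and $A$; and if one needs the statement for all $a\in(0,A]$ uniformly, the sharp bound is $c(t)-a\succ_{\clow,A} \min\{t^2/a,\ t\}$, which is $\succ t^2$ on $t\in[0,1]$ precisely when $a$ stays bounded. I would therefore state and prove the lemma under the (harmless, and implicitly intended) reading that the implied constant is allowed to depend on a lower bound for $d(x,G)$ as well, or simply invoke that in all applications $d(x,h)\succ 1$; the hyperbolic-Pythagoras computation then delivers $d(x,h')-d(x,h)\succ_A d(h,h')^2$ directly.
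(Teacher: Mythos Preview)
Your approach via hyperbolic trigonometry can be made to work, but the write-up contains two errors, one minor and one more substantive.

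First, the minor one: you compare in $\HH^2_\clow$, but the angle comparison you invoke (that the comparison angle at $\bar h$ is at least $\pi/2$) is the CAT($\kappa$) inequality, which requires $\kappa$ to be an \emph{upper} curvature bound. Since $\mtilde$ has curvature in $[-\clow^2,-1]$, it is CAT($-1$) and CAT($0$), but not CAT($-\clow^2$) in general. So the correct model is $\HH^2$ (or, more simply, the Euclidean plane), not $\HH^2_\clow$. This is harmless --- just replace $\clow$ by $1$ throughout.

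Second, and this is the real issue: your conclusion that the lemma needs an additional lower bound on $a=d(x,G)$ is wrong. You yourself arrive at $c(t)-a\succ \min\{t^2/a,\;t\}$ and then assert this is $\succ t^2$ ``precisely when $a$ stays bounded'' below. But on the range $t\in[0,1]$ we have $t\ge t^2$, and on $a\in(0,A]$ we have $t^2/a\ge t^2/A$; so both branches of the minimum are $\succ_A t^2$ uniformly. The ``degenerate'' regime you worry about is not degenerate at all.

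By contrast, the paper's proof is a one-liner using the Euclidean (CAT($0$)) comparison, which avoids hyperbolic trigonometry entirely. Since the angle at $h$ in $\mtilde$ is $\pi/2$, the Euclidean triangle with legs $a=d(x,h)$, $t=d(h,h')$ and a right angle at $\bar h$ has hypotenuse $\sqrt{a^2+t^2}\le d(x,h')$. Hence
\[
d(x,h')-d(x,h)\;\ge\;\sqrt{a^2+t^2}-a\;=\;\frac{t^2}{\sqrt{a^2+t^2}+a}\;\ge\;\frac{t^2}{\sqrt{A^2+1}+A},
\]
with no case analysis and no issue as $a\to 0$. Your route works once the two slips above are fixed, but the Euclidean comparison is both simpler and cleaner.
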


\begin{proof}
    Let $\Delta(\bar{x}, \bar{h}, \bar{h}')$ be a triangle in the Euclidean plane such that $d(\barx, \barh) = d(x, h)$, $d(\barh', \barh) = d(h', h)$, and the angle that $[\barh, \barx]$ and $[\barh, \barh']$ make at $\barh$ is equal to $\frac{\pi}{2}$. By comparison, it is enough to prove the above inequality for $\Delta(\bar{x}, \bar{h}, \bar{h}')$, which is an exercise in Euclidean geometry.
\end{proof}

\begin{lemma} \label{proj. is holder}
    The function $P_o \from \bdry \times \bdry \to \mtilde$ defined by $P_o(\zeta_1, \zeta_2) = P_{(\zeta_1, \zeta_2)} (o)$ is \hold{\frac{1}{2}} continuous.
\end{lemma}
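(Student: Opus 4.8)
The plan is to prove that $P_o$ is $\tfrac12$--H\"older on an arbitrary compact subset $K$ of its domain $(\bdry\times\bdry)\setminus\{(\zeta,\zeta):\zeta\in\bdry\}$. Since $K$ avoids the diagonal and the maps $(\zeta_1,\zeta_2)\mapsto d_o(\zeta_1,\zeta_2)$ and $(\zeta_1,\zeta_2)\mapsto d(o,(\zeta_1,\zeta_2))$ are continuous, there are $\epsilon_0,A>0$ with $d_o(\zeta_1,\zeta_2)\ge\epsilon_0$ and $d(o,(\zeta_1,\zeta_2))\le A$ for all $(\zeta_1,\zeta_2)\in K$. Fix $(\zeta_1,\zeta_2),(\eta_1,\eta_2)\in K$, write $G=(\zeta_1,\zeta_2)$, $G'=(\eta_1,\eta_2)$, $p=P_G(o)$, $p'=P_{G'}(o)$, and $\rho=\max\{d_o(\zeta_1,\eta_1),d_o(\zeta_2,\eta_2)\}$. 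Because $d(p,p')\le 2A$ automatically, it suffices to assume $\rho$ small and prove $d(p,p')\prec_K\sqrt\rho$.

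The heart of the matter is the following claim: \emph{perturbing both endpoints by visual distance $\rho$ moves the geodesic within distance $\rho$ of $p$}, i.e. there is a point $z'\in G'$ with $d(p,z')\prec_K\rho$ (and, symmetrically, a point $z\in G$ with $d(p',z)\prec_K\rho$). To prove the claim I would set $M:=\tfrac12\ln(1/\rho)$ and compare the four rays $[o,\zeta_1)$, $[o,\zeta_2)$, $[o,\eta_1)$, $[o,\eta_2)$ at distance $M$ from $o$. Since $M\le\ln(1/d_o(\zeta_i,\eta_i))$, the defining property of the visual distance together with Lemma~\ref{same endpoint converge}(i) shows that on the $\zeta_i$-- and $\eta_i$--rays the points at distance $M$ from $o$ are $\prec_K e^{M}\rho=\sqrt\rho$ apart; and Lemma~\ref{same endpoint converge}(iv), using $d(o,G),d(o,G')\le A$, places each of these four ray-points within $\prec_K e^{-M}=\sqrt\rho$ of a point of $G$ (respectively $G'$). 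Combining, one produces $y_a,y_b\in G$ and $y_a',y_b'\in G'$, each at distance $M+\bfO_A(1)$ from $o$, with $d(y_a,y_a'),d(y_b,y_b')\prec_K\sqrt\rho$. Since $p$ is the point of $G$ closest to $o$, convexity of $t\mapsto d(o,G(t))$ forces $p$ to lie within $\bfO_A(1)$ of the midpoint of $[y_a,y_b]$, while $d(y_a,y_b)=2M+\bfO_A(1)$. Running the argument in the proof of Lemma~\ref{close endpoints converge middle} (two applications of Lemma~\ref{same endpoint converge}(iii)), but keeping the factor $\max\{d(y_a,y_a'),d(y_b,y_b')\}$ rather than discarding it, then yields $z'\in[y_a',y_b']\subset G'$ with $d(p,z')\prec_K e^{-M}\sqrt\rho=\rho$, as claimed.

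To finish I would feed the claim into Lemma~\ref{perp. increase length}. From $z\in G$ with $d(p',z)\prec_K\rho$ and from $p=P_G(o)$ being closest to $o$ we get $d(o,p)\le d(o,z)\le d(o,p')+\bfO_K(\rho)$; by symmetry $|d(o,p)-d(o,p')|\prec_K\rho$, and hence $d(o,z)-d(o,p)\prec_K\rho$. A short convexity argument shows that $d(p,z)\le 1$ once $\rho$ is small enough (otherwise Lemma~\ref{perp. increase length}, applied to the point of $G$ at distance $1$ from $p$, would force $d(o,z)-d(o,p)$ to exceed a fixed positive constant). Then Lemma~\ref{perp. increase length} with geodesic $G$, point $o$, $h=p$ and $h'=z$ gives $d(p,z)^2\prec_A d(o,z)-d(o,p)\prec_K\rho$, so $d(p,p')\le d(p,z)+d(z,p')\prec_K\sqrt\rho$, which is the desired estimate.

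The main difficulty is the claim, and specifically obtaining the $\rho$ (not $\sqrt\rho$) bound there: a naive version --- using points at bounded distance from $o$, or invoking Lemma~\ref{close endpoints converge middle} verbatim --- loses a square root and only gives $\tfrac14$--H\"older continuity. The sharp exponent comes from tuning the scale to $M\asymp\tfrac12\ln(1/\rho)$ --- where the perturbed rays are still $\sqrt\rho$--close and the geodesics have already come within $\sqrt\rho$ of them --- and from retaining the extra $\max\{d(y_a,y_a'),d(y_b,y_b')\}$ factor in the midpoint-convergence step, so that the two $\sqrt\rho$ contributions multiply to $\rho$. One should also check that the implied constants stay uniform as $\rho\to0$: the quantities $d(y_a,y_a')$, $d(y_b,y_b')$ tend to $0$, so the auxiliary lemmas are invoked with parameters in a fixed bounded range.
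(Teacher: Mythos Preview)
Your proof is correct and follows essentially the same two-step strategy as the paper: first establish that there is a point $z'\in G'$ with $d(p,z')\prec_K\rho$ (and symmetrically $z\in G$ with $d(p',z)\prec_K\rho$), then feed this into Lemma~\ref{perp. increase length} exactly as you describe. The only difference is tactical, in the proof of the claim. You work at scale $M=\tfrac12\ln(1/\rho)$, where the paired ray-points are $\sqrt\rho$ apart, and must therefore retain the factor $\max\{d(y_a,y'_a),d(y_b,y'_b)\}$ in the midpoint-convergence step to recover the full $e^{-M}\cdot\sqrt\rho=\rho$. The paper instead works at scale $T=\ln(1/\rho)$: at that distance the paired points on $G$ and $G'$ are only $\prec 1$ apart, but the segments $[x'_1,x'_2]\subset G$ and $[y'_1,y'_2]\subset G'$ now have length $\approx 2T$, so Lemma~\ref{close endpoints converge middle} applied verbatim (with the endpoint discrepancy absorbed into the constant) already gives $O(e^{-d(x'_1,x'_2)/2})=O(e^{-T})=O(\rho)$. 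Your diagnosis of the difficulty is right, but the resolution can be simpler---doubling the scale avoids the need to sharpen the lemma.
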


\begin{proof}
    For $R > 0$, define the compact set 
    $\mathbf{K}_R \coloneqq \{ (\zeta_1, \zeta_2): d(o, (\zeta_1, \zeta_2)) \leq R) \}$, and let $\bfzeta \coloneqq (\zeta_1, \zeta_2)$ and $\bfeta \coloneqq  (\eta_1, \eta_2)$ be elements in $\mathbf{K}_R$. Define $T > 0$ to be such that 
    $$e^{-T} = d( \bfzeta, \bfeta) = \max \{d_o(\zeta_1, \eta_1),\, d_o(\zeta_2, \eta_2)  \}. $$
    Without loss of generality we can assume that $T$ is large enough.
    By the definition of $d_o$, if $x_1$ and $y_1$ are points at distance $T$ from $o$ on $[o, \zeta_1)$ and $[o, \eta_1)$ respectively, we have $d(x_1, y_1) \prec 1$. 
    Letting $h_{\bfzeta} \coloneqq P_o(\bfzeta)$, since $d(o, h_{\bfzeta}) \leq R$ and $T$ is large enough, Lemma \ref{same endpoint converge} Part (\ref{one in boundary coarse}) gives $x'_1 \in [h_{\bfzeta}, \zeta_1)$ with $d(x_1, x'_1) \leq 1$. Letting $h_{\bfeta} \coloneqq P_o(\bfeta)$, a similar argument gives $y'_1 \in [h_{\bfeta}, \eta_1)$ with $d(y_1, y'_1) \leq 1$. Triangle inequality then implies
    $$ d(x'_1, y'_1) \leq d(x'_1, x_1) + d(x_1, y_1) + d(y_1, y'_1) \prec 1.$$
    The same argument for $\zeta_2, \eta_2$ replacing $\zeta_1, \eta_1$ gives $x'_2 \in [h_{\bfzeta}, \zeta_2)$ and $y'_2 \in [h_{\bfeta}, \eta_2)$ with $ d(x'_2, y'_2) \prec 1$.
    Since $d(x'_1, h_{\bfzeta})$ and $d( h_{\bfzeta}, x'_2)$ are both $T + \bfO({R})$, 
    Lemma \ref{close endpoints converge middle} applied to the quadruple $(x'_1, y'_1, x'_2, y'_2)$ gives a point $h'_{\bfzeta} \in (\eta_1, \eta_2)$ with $d(h_{\bfzeta}, h'_{\bfzeta}) = \bfO(e^{-T})$.
    Hence we have
    \begin{align*} 
        d(o, h_{\bfeta}) = \inf \{ d(o, h) : h \in (\eta_1, \eta_2) \} \leq d(o, h'_{\bfzeta})  
        \leq d(o, h_{\bfzeta}) + d(h_{\bfzeta}, h'_{\bfzeta}) \leq d(o, h_{\bfzeta}) + c e^{-T}. 
    \end{align*}
    for some constant $c$.
    In a similar fashion, Lemma \ref{close endpoints converge middle} gives a point $h'_{\bfeta} \in (\zeta_1, \zeta_2)$ with $d(h_{\bfeta}, h'_{\bfeta}) = \bfO(e^{-T})$, thus we have
    \begin{align*} 
        d(o, h'_{\bfeta}) \leq d(o, h_{\bfeta}) + d(h_{\bfeta}, h'_{\bfeta}) \leq d(o, h_{\bfeta}) + c' e^{-T} \leq d(o, h_{\bfzeta}) + c'' e^{-T},
    \end{align*}
    for some constants $c'$ and $c''$.
    Since $T$ is large enough, the above gives $d(o, h'_{\bfeta}) - d(o, h_{\bfzeta}) < 1$. Thus by Lemma \ref{perp. increase length}, 
    $$ d(h_{\bfzeta}, h'_{\bfeta})^2 \prec_R d(o, h'_{\bfeta}) - d(o, h_{\bfzeta}),$$
    which implies $d(h_{\bfzeta}, h'_{\bfeta}) =\bfO_R (e^{-T/2})$. By triangle inequality
    $$ d(h_{\bfzeta}, h_{\bfeta}) \leq d(h_{\bfzeta}, h'_{\bfeta}) + d(h'_{\bfeta}, h_{\bfeta}) = \bfO(e^{-\frac{T}{2}}).$$
    This concludes the proof.
\end{proof}

\begin{lemma} \label{passes near midpoint}
    Let $\gamma \in \Gamma$ and denote the axis of $\gamma$ by $L$. Then there exists a constant $c = c(\gamma)$ such that for every 
    $\zeta \in \bdry \setminus \fix(\gamma)$, the geodesic $(\zeta, \gamma \zeta)$ passes through the ball of radius $c$ centered at the midpoint of $[P_L (\zeta), \gamma. P_L (\zeta) ]$.
\end{lemma}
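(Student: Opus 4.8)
Write $p := P_L(\zeta)$ and let $\ell = \ell(\gamma) > 0$ denote the translation length of $\gamma$ along $L$. Since $\gamma$ preserves $L$ and nearest-point projection to a convex set is equivariant, $P_L(\gamma\zeta) = \gamma. P_L(\zeta) = \gamma p$; thus the midpoint $m$ of $[P_L(\zeta),\, \gamma. P_L(\zeta)]$ is the midpoint of $[p,\gamma p]$, with $d(p, m) = \ell/2$.

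The plan is to compare $g := (\zeta, \gamma\zeta)$ with a broken path that runs along $L$ through $m$. By definition of $P_L$ the rays $[p, \zeta)$ and $[\gamma p, \gamma\zeta)$ are perpendicular to $L$ at $p$ and at $\gamma p$, so the concatenation $\sigma$ of $[p,\zeta)$ (oriented towards $p$), the segment $[p, \gamma p] \subseteq L$, and $[\gamma p, \gamma\zeta)$ is a path from $\zeta$ to $\gamma\zeta$ with two breakpoints of angle exactly $\pi/2$, a middle piece of length $\ell$, and two infinite geodesic rays as its outer pieces. In the $\cat(-1)$ space $\mtilde$ — whose hyperbolicity constant is universal — such a path is a $(\lambda, C)$-quasigeodesic with $\lambda$ and $C$ depending only on $\ell$ and the curvature bounds of $M$. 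Granting this, the stability of quasigeodesics (Morse lemma) places the geodesic $g$, which has the same endpoints as $\sigma$, within Hausdorff distance $D = D(\ell)$ of $\sigma$; since $m \in [p,\gamma p] \subseteq \sigma$, the geodesic $g$ meets the ball of radius $D$ about $m$, and $c(\gamma) := D$ is the constant sought. (One sees concretely that $g$ reaches the perpendicular to $L$ over $m$: as $\zeta, \gamma\zeta \notin L_\infty = \fix(\gamma)$, the map $P_L$ is continuous along the arc $g \cup \{\zeta,\gamma\zeta\}$, so its image is a connected subset of $L$ containing $p$ and $\gamma p$, hence containing $m$; a point $w \in g$ with $P_L(w) = m$ then satisfies $d(w, m) = d(w, L)$, and it is this $d(w,L)$ that must be bounded.)

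The step requiring care is the claim that $\sigma$ is a quasigeodesic with constants controlled by $\ell$ alone. The usual piecewise-geodesic criterion — bounded-below break angles together with finite pieces long relative to the hyperbolicity constant — would need the single finite piece $[p,\gamma p]$ to be long, which is not automatic; however $\ell(\gamma)$ is bounded below by twice the injectivity radius of the closed manifold $M$, so the criterion does apply, uniformly in $\gamma$, and in fact produces a constant $c$ independent of $\gamma$, which is stronger than the statement. If one prefers not to invoke the injectivity radius, one bypasses the issue by comparison: in $\HH^2$ the point of $(\zeta, \gamma\zeta)$ whose foot on $L$ is $m$ is exactly its nearest point to $L$, at distance $-\log\tan\big(\tfrac12\arctan\sinh(\ell/2)\big)$, and a comparison argument using $-\clow^2 \le \operatorname{curv}(M) \le -1$ transfers a bound of this shape to $\mtilde$, valid for every $\ell > 0$; since then $d(w,L) = d(w,m)$ is bounded, $g$ passes within $d(w,L)$ of $m$. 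Everything else — equivariance of $P_L$, continuity of $P_L$ along $g$, and $P_L(w) = m \Rightarrow d(w,L)=d(w,m)$ — is immediate.
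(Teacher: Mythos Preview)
Your approach is correct in outline but takes a genuinely different route from the paper. The paper's proof is a three-line soft argument: set $f(\zeta)$ equal to the distance in question, observe that $f$ is continuous and $\gamma$--invariant on $\bdry \setminus \fix(\gamma)$, and note that $P_L^{-1}[x_0,\gamma x_0]$ is a compact set meeting every $\langle\gamma\rangle$--orbit; the maximum of $f$ on this set is the desired $c$. No quantitative information about $c$ is produced, and none is needed downstream.

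Your argument is constructive and in principle stronger: it yields an explicit bound for $c$ in terms of $\ell(\gamma)$ (your $\HH^2$ formula is correct and equals $\log\coth(\ell/4)$), and you correctly observe that since $\ell(\gamma)$ is bounded below on the closed manifold $M$, one even gets $c$ independent of $\gamma$. Two places deserve more care, however. First, the ``usual piecewise-geodesic criterion'' you invoke requires every piece to exceed a threshold depending on the hyperbolicity constant; the systole of $M$ need not clear that threshold, so for small $\ell$ you cannot cite the criterion directly --- you would have to verify the quasigeodesic property for this specific right-angled configuration (which is true, via the exponential divergence of the two perpendiculars to $L$, but is a separate computation). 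Second, the phrase ``a comparison argument \ldots\ transfers a bound of this shape to $\mtilde$'' is doing real work: the configuration is a Lambert-type quadrilateral, not a triangle, so you need to say which $\cat(-1)$ inequality you are applying and check that it cuts in the right direction. Both gaps are fillable, but the paper's compactness argument sidesteps all of this entirely.
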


\begin{proof}
    Define $f \from \bdry \to \RR$ by setting $f(\zeta)$ to be the distance between the midpoint of $[P_L (\zeta), \gamma. P_L (\zeta) ]$ and the geodesic $(\zeta, \gamma \zeta)$. Fixing $x_0 \in L$, since $f$ is continuous, it attains its maximum $c$ on the compact set $P_L^{-1} [x_0, \gamma x_0]$. Since $f$ is $\gamma$--invariant, $c$ is the desired constant. 
\end{proof}

\section{Reduction to adjusted counts} \label{reduction-control}

The goal of this section is to prove Proposition \ref{reduction-prop}, which is directly used in the proof of Theorem \ref{main-theorem}, given right after Theorem \ref{control counting}.

\begin{proposition} \label{reduction-prop}
    Let $x, y \in \mtilde,\, \id \neq \gamma \in \Gamma$, and denote the axis of $\gamma$ by $L$. Then there are \holder{}-continuous functions $F_1 \from \partial^1 L \to \RR$ and $F_2 \from S(x) \to \RR$ such that $F_1$ is $\gamma$--invariant, and for every $g \in \Gamma$ we have
    $$ d(gx, \gamma. g y) = 2 d(gx, L) + F_1 (v_1(g)) + F_2(g^{-1}. v_2(g)) + \bfO_{x, y} (e^{- d(gx, L)/2 }),$$
    where $v_1(g)$ and $v_2(g)$ are tangents to $[gx, P_L(gx)]$ at $P_L(gx)$ and $gx$ respectively.
\end{proposition}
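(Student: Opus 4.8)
The plan is to build the two adjustment functions separately: $F_1$ from the \emph{renormalized self-distance} $d(gx,\gamma.gx)-2d(gx,L)$, and $F_2$ from the discrepancy between $\gamma.gy$ and $\gamma.gx$; then to assemble $d(gx,\gamma.gy)$ out of these two pieces. Write $D=D(g):=d(gx,L)$, and let $\ell:=\ell(\gamma)>0$ be the translation length of $\gamma$ on $L$. First I would dispose of the bounded range: once $F_1,F_2$ are constructed and seen to be bounded --- as they will be, since $F_1$ is $\gamma$--invariant and hence descends to the compact quotient $\partial^1 L/\langle\gamma\rangle$, while $F_2$ lives on the compact sphere $S(x)$ --- the function $g\mapsto d(gx,\gamma.gy)-2D-F_1(v_1(g))-F_2(g^{-1}.v_2(g))$ is bounded on $\{\,g:D(g)\le D_0\,\}$, hence automatically $\bfO_{x,y}(e^{-D/2})$ there; so I may assume $D$ is as large as convenient. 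The geometric fact tying the two endpoints together is that $gx=\pi(\calG_D(v_1(g)))$ --- the perpendicular ray leaving $P_L(gx)$ towards $gx$, flowed for time $D$ --- so that $\gamma.gx=\pi(\calG_D(\gamma.v_1(g)))$ sits symmetrically over $\gamma.P_L(gx)$.

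For $v\in\partial^1 L$ set $\rho^t_v:=\pi(\calG_t(v))$ and define $F_1(v):=\lim_{t\to\infty}\bigl(d(\rho^t_v,\gamma.\rho^t_v)-2t\bigr)$. I would then check: (i) the limit exists with an exponential rate, $d(\rho^t_v,\gamma.\rho^t_v)=2t+F_1(v)+\bfO(e^{-t})$ uniformly in $v$ --- the segments $[\rho^t_v,\gamma.\rho^t_v]$ pass within $\bfO_\gamma(1)$ of the midpoint of $[\pi(v),\gamma.\pi(v)]$ (a finite-$t$, uniform-in-$v$ version of \lemref{passes near midpoint}, using compactness of $\partial^1 L/\langle\gamma\rangle$) and, as $t$ grows, converge exponentially fast from both ends to the geodesic $(v^+,\gamma.v^+)$ (\lemref{same endpoint converge}), so a telescoping argument in $t$ gives the estimate; (ii) $\gamma$--invariance --- since $\gamma$ is an isometry commuting with the flow, $\gamma.\rho^t_v=\rho^t_{\gamma.v}$, whence $d(\rho^t_{\gamma.v},\gamma.\rho^t_{\gamma.v})=d(\gamma.\rho^t_v,\gamma^2.\rho^t_v)=d(\rho^t_v,\gamma.\rho^t_v)$ and $F_1(\gamma.v)=F_1(v)$; (iii) \holder{}-continuity --- for each fixed $t$, the approximant $A_t(v):=d(\rho^t_v,\gamma.\rho^t_v)-2t$ is \holder{}-continuous, a composition of the \holder{} maps $v\mapsto(\pi(v),v^+)$ (Lipschitz on the first slot, \holder{} on the second by \corref{plus-holder}), $(z,\xi)\mapsto$ ``point at distance $t$ from $z$ along $[z,\xi)$'' (\holder{} by \lemref{gt-holder} and \lemref{same endpoint converge}), and a distance, with \holder{} data controlled uniformly in $v$ over the compact quotient; estimating $|F_1(v)-F_1(v')|$ by inserting $A_t$ and balancing the uniform rate $\bfO(e^{-t})$ against the (at worst exponential in $t$) \holder{} constant of $A_t$ then gives \holder{}-continuity of $F_1$. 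In particular, $d(gx,\gamma.gx)=2D+F_1(v_1(g))+\bfO_\gamma(e^{-D})$.

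It remains to replace $\gamma.gx$ by $\gamma.gy$. Put $c:=g^{-1}\gamma g$, with axis $L_c=g^{-1}L$, and let $\eta_g\in\bdry$ be the endpoint of the ray from $\gamma.gx$ through $gx$. Since $d(\gamma.gx,\gamma.gy)=d(x,y)=\bfO(1)$ and $d(gx,\gamma.gx)=2D+\bfO_\gamma(1)$ (by the previous step), \lemref{distance approximates Busemann} (applied at the point $gx\in[\gamma.gx,\eta_g)$), together with $\Gamma$--invariance of the Busemann cocycle, gives
\begin{align*}
d(gx,\gamma.gy)-d(gx,\gamma.gx)=-\beta(\eta_g,\gamma.gx,\gamma.gy)+\bfO_{x,y}(e^{-D})=\beta\bigl(g^{-1}\gamma^{-1}.\eta_g,\,y,\,x\bigr)+\bfO_{x,y}(e^{-D}).
\end{align*}
Here $g^{-1}\gamma^{-1}.\eta_g$ is the endpoint of the ray from $x$ through $c^{-1}.x$, while $(g^{-1}.v_2(g))^+$ is the endpoint of the ray from $x$ through $P_{L_c}(x)$. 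Because $c^{-1}.x$ lies at distance $D=d(x,L_c)$ from $L_c$, with foot $c^{-1}.P_{L_c}(x)$ at distance $\ell$ from $P_{L_c}(x)$, the geodesic $[x,c^{-1}.x]$ passes within $\bfO_\gamma(1)$ of $P_{L_c}(x)$ (again a finite-range variant of \lemref{passes near midpoint}); hence the two rays from $x$ agree to within $\bfO(1)$ up to parameter $\approx D$, so their endpoints satisfy $d_o\bigl(g^{-1}\gamma^{-1}.\eta_g,(g^{-1}.v_2(g))^+\bigr)=\bfO_x(e^{-D})$ --- the constant relating $d_x$ to $d_o$ being harmless as $x$ is fixed. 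Setting $F_2(w):=\beta(w^+,y,x)$ for $w\in S(x)$ --- which is \holder{}-continuous by \corref{plus-holder} and \lemref{buseman lip-holder} --- and invoking the $\tfrac{1}{2}$--\holder{} continuity of $\beta$ in its first variable (\lemref{buseman lip-holder}; the remaining two slots are the fixed points $y,x$), we obtain $d(gx,\gamma.gy)-d(gx,\gamma.gx)=F_2(g^{-1}.v_2(g))+\bfO_{x,y}(e^{-D/2})$. Adding this to $d(gx,\gamma.gx)=2D+F_1(v_1(g))+\bfO_\gamma(e^{-D})$ yields the Proposition.

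The geometric inputs --- exponential convergence of the relevant segments, geodesics passing boundedly close to midpoints, feet of perpendiculars moving continuously (\propref{proj-close}, \lemref{proj. is holder}) --- are all instances, or routine finite-range variants, of the lemmas of Section~\ref{hold_section}. The step I expect to be the main obstacle is (iii): since $F_1$ is defined only as a limit, controlling its modulus of continuity forces one to combine the \holder{} bounds for the finite-time approximants with the uniform exponential rate from (i), checking that the \holder{} constants of $A_t$ do not grow faster than exponentially in $t$. It is worth noting that the exponent $\tfrac{1}{2}$ in the error term is exactly the \holder{} exponent of the Busemann cocycle in its first variable, and is the only place a square root enters.
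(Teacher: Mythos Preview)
Your decomposition and the function $F_2(w)=\beta(w^+,y,x)$ coincide with the paper's (\lemref{step-one} and \lemref{step-two}), and your handling of the $F_2$--step is essentially identical to \lemref{step-two}. The one substantive difference is in $F_1$: you take the limit $F_1(v)=\lim_{t\to\infty}\bigl(d(\rho^t_v,\gamma.\rho^t_v)-2t\bigr)$ as the \emph{definition} and then face the balancing argument in (iii) that you correctly identify as the main obstacle. The paper instead writes down a closed-form expression,
\[
F_1(v)=\beta\bigl(v^+,\,P_{(v^+,\gamma v^+)}o,\,\pi(v)\bigr)+\beta\bigl(\gamma v^+,\,P_{(v^+,\gamma v^+)}o,\,\gamma\pi(v)\bigr),
\]
and then derives your limit formula as a consequence (equation~\eqref{F_in_lim}). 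With this formula in hand, \holder{}-continuity of $F_1$ is immediate from \corref{plus-holder}, \lemref{proj. is holder}, and \lemref{buseman lip-holder}, bypassing your step (iii) entirely; $\gamma$--invariance is read off from the limit form, just as you do. So your route is correct but trades the single clever formula for an approximation-and-balancing argument; the paper's route buys a cleaner proof of regularity at the cost of discovering the Busemann identity.
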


\begin{proof}
    The proposition follows from Lemma \ref{step-one} and Lemma \ref{step-two}, proved below.
\end{proof}

We fix $x,\, y,\, \gamma,\, L$ to be as in Proposition \ref{reduction-prop}, and for the rest of this section we do not show the dependence of the implied constants in $\bfO_\param(\param)$ on these fixed parameters; for example, instead of $A = \bfO_\gamma (B)$ we write $A = \bfO(B)$.

\begin{lemma} \label{step-one}
    Define the function $F_1 \from \partial^1 L \to \RR$ by 
    \begin{align} \label{def-fone}
        F_1(v) \coloneqq \beta(v^+, P_{(v^+, \gamma v^+)} o, \pi(v)\,) + \beta(\gamma v^+, P_{(v^+, \gamma v^+)} o, \gamma \pi(v)\,).
    \end{align}
    Then $F_1$ is $\gamma$--invariant and \holder{}-continuous, and for every $g \in \Gamma$,
    \begin{align} \label{step 1 formula}
        d(gx, \gamma gx) - 2 d(gx, L) = F_1(v_1(g)) + \bfO({e^{-d(gx, L)}}).
    \end{align}
\end{lemma}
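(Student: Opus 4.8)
The plan is to prove \eqnref{step 1 formula} first, and then deal with $\gamma$-invariance and \holder{}-continuity of $F_1$. Write $h \coloneqq P_L(gx)$ for the foot of perpendicular, so $d(gx, L) = d(gx, h)$, and set $\zeta \coloneqq v_1(g)^+$, which is the endpoint at infinity of the geodesic ray from $h$ through $gx$. The key geometric observation is that the geodesic $[gx, \gamma gx]$ should pass close to the midpoint of $[h, \gamma h]$ — this is a quantitative version of the fact that the common perpendicular from $L$ to $\gamma L$ (which is just a sub-segment of $L$, since $\gamma$ preserves $L$) is the one realizing the short distance, and $gx$ sits at distance $d(gx,L)$ "above" $h$, so $[gx, \gamma gx]$ drops down to near $L$ near the midpoint of the translation segment $[h, \gamma h]$. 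Actually, since $h, \gamma h \in L$ and the ray from $h$ to $gx$ leaves $L$ with some angle, the geodesic $[gx, \gamma gx]$ is essentially the union of $[gx, m]$ and $[m, \gamma gx]$ where $m$ is a point near the midpoint of $[h, \gamma h]$; this is exactly the kind of statement Lemma \ref{close endpoints converge middle} is built to control.

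**The main computation.** Concretely, I would argue as follows. Let $m_0$ be the midpoint of $[h, \gamma h] \subset L$. Since $d(gx, h) = d(\gamma gx, \gamma h)$ by $\gamma$-equivariance, and $[gx,\gamma gx]$ joins the two "lifted" endpoints, Lemma \ref{close endpoints converge middle} (applied with $x = h$, $x' = gx$, $y = \gamma h$, $y' = \gamma gx$, noting $d(h, gx), d(\gamma h, \gamma gx) \le d(gx, L)$... wait, these are not bounded) — so instead I need the version for long segments. The cleaner route: let $z$ be the point on $[gx, \gamma gx]$ nearest to $L$; one shows $z$ lies within bounded distance of $m_0$ using convexity, and then decomposes $d(gx, \gamma gx) = d(gx, z) + d(z, \gamma gx)$. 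Then I approximate $d(gx, z)$ by a Busemann-type quantity: since $z$ is far from $gx$ along a ray that eventually limits to $\zeta$ (because $[gx, \gamma gx]$ and the ray $[gx, \zeta)$ stay exponentially close — Lemma \ref{same endpoint converge}), Lemma \ref{distance approximates Busemann} gives $d(gx, z) = d(h, z) - d(h, gx) + d(h,gx) + \cdots$; more precisely $d(gx, z) - d(gx', z)$ comparisons let me write $d(gx,\gamma gx) - 2d(gx,h)$ as a sum of two Busemann cocycles based at a point near $m_0$, which after changing basepoint to $P_{(\zeta, \gamma\zeta)}o$ (using $|\beta(\eta, p, q) - \beta(\eta, p', q)| \le d(p,p')$ and the fact that $P_{(\zeta,\gamma\zeta)}o$ is within bounded distance of $[gx,\gamma gx]$ hence of $m_0$ up to the $\gamma$-orbit ambiguity — here is where $\gamma$-equivariance of the whole picture must be used carefully, translating by $g$) yields exactly $F_1(v_1(g))$ with an $\bfO(e^{-d(gx,L)})$ error coming from the exponential fellow-traveling estimates.

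**$\gamma$-invariance.** This is a direct symbolic check: replacing $v$ by $\gamma v$ in \eqnref{def-fone}, we have $(\gamma v)^+ = \gamma(v^+)$ and $(\gamma v)^- $ irrelevant, so $(\gamma v^+, \gamma^2 v^+) = \gamma \cdot (v^+, \gamma v^+)$, hence $P_{(\gamma v^+, \gamma^2 v^+)} o = ?$ — not quite $\gamma P_{(v^+,\gamma v^+)}o$ unless we also move $o$, but the Busemann cocycle satisfies $\beta(\gamma\eta, \gamma p, \gamma q) = \beta(\eta, p, q)$, so $F_1(\gamma v)$ equals $F_1(v)$ with $o$ replaced by $\gamma^{-1}o$ throughout; the two terms then telescope using the cocycle identity $\beta(\eta,p,q) = \beta(\eta,p,r) + \beta(\eta,r,q)$ to absorb the basepoint change, because the two summands involve $\beta$ at $v^+$ and at $\gamma v^+$ with the combination $\pi(v), \gamma\pi(v)$ arranged so the $o$-dependence cancels between consecutive terms in the $\gamma$-orbit. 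I would verify this telescoping explicitly; it is short.

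**\holder{}-continuity.** Each of the two summands is a composition of maps already shown to be \holder{}: $v \mapsto v^+$ is \holder{} (Corollary \ref{plus-holder}), $(\zeta_1,\zeta_2) \mapsto P_{(\zeta_1,\zeta_2)}o$ is \hold{\frac12} (Lemma \ref{proj. is holder}), $v \mapsto \pi(v)$ is Lipschitz, the map $v \mapsto \gamma v^+ = \gamma(v^+)$ is \holder{} (post-composition with the isometry $\gamma$, which is Lipschitz on compacta), and $\beta$ is \hold{\frac12} in the first variable and Lipschitz in the other two (Lemma \ref{buseman lip-holder}). Since \holder{}-continuity is closed under the relevant compositions and finite sums (as recalled at the end of Section \ref{background}), $F_1$ is \holder{}-continuous. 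The main obstacle is the first part — pinning down that $[gx,\gamma gx]$ meets a bounded neighborhood of the midpoint of $[h,\gamma h]$ and then bookkeeping the basepoint changes so that the error stays $\bfO(e^{-d(gx,L)})$ rather than $\bfO(e^{-d(gx,L)/2})$; Lemma \ref{passes near midpoint} and the fellow-traveling lemmas are exactly the tools for this, but one must be careful that the fellow-traveling is over a segment of length comparable to $d(gx,L)$ so the exponential gain is the full $e^{-d(gx,L)}$ and not its square root.
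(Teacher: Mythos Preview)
Your approach is essentially the same as the paper's: Hölder continuity is proved identically, and the main computation proceeds by locating a point near the midpoint of $[h,\gamma h]$ (via Lemma~\ref{passes near midpoint}), splitting $d(gx,\gamma gx)$ there, and converting the two pieces to Busemann cocycles via Lemma~\ref{distance approximates Busemann}. The paper is slightly more careful about which geodesic the midpoint sits on --- it first takes $z$ on the \emph{ideal} geodesic $(\zeta,\gamma\zeta)$, then transfers to $z'\in[gx,\gamma gx]$ via Lemma~\ref{close endpoints converge middle}; this sequencing is what guarantees the error is $\bfO(e^{-d(gx,L)})$ rather than its square root, so it is worth making explicit.

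The one place your proposal is genuinely murky is $\gamma$-invariance. Your ``telescoping so the $o$-dependence cancels between consecutive terms in the $\gamma$-orbit'' is not quite the right mechanism. The clean observation (which the paper uses) is that the sum $\beta(\zeta,z,h)+\beta(\gamma\zeta,z,\gamma h)$ is independent of the choice of $z\in(\zeta,\gamma\zeta)$, because moving $z$ along the geodesic changes the two summands by equal and opposite amounts. Once you know this, the direct check is immediate: in $F_1(\gamma v)$ you may replace the basepoint $P_{(\gamma v^+,\gamma^2 v^+)}o$ by $\gamma\cdot P_{(v^+,\gamma v^+)}o$ (both lie on $\gamma\cdot(v^+,\gamma v^+)$), and then $\Gamma$-equivariance of $\beta$ gives $F_1(\gamma v)=F_1(v)$. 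The paper packages this even more elegantly by deriving the basepoint-free formula
\[
F_1(v)=\lim_{t\to\infty}\bigl(d(\pi(v_t),\gamma\pi(v_t))-2t\bigr),
\]
from which $\gamma$-invariance is immediate.
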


\begin{proof}
    The map $v \mapsto v^+$ is \holder{}-continuous by Corollary \ref{plus-holder}, $P_{(\param, \param)} o$ and $\beta(\param, \param, \param)$ are \holder{}-continuous by Lemma \ref{proj. is holder} and Lemma \ref{buseman lip-holder}, and $\pi(\param)$ is \lip{1} (see Section 2.3. of \cite{pps-book}). One can check that $\gamma \from \partial_\infty \mtilde \to \bdry$ is also \holder{}-continuous (in fact, it is Lipschitz), thus $F_1$, being a combination of \holder{}-continuous functions, is \holder{}-continuous itself. 
    If we change $g$ with $\gamma^i g$ for some $i \in \ZZ$, all the terms on the left and right hand sides of (\ref{step 1 formula}) remain the same, hence, without loss of generality, we can assume $d(o, P_L(gx)) = \bfO(1)$. We can moreover assume that $d(gx, L)$ is large enough. We fix such an element $g$ for the rest of the proof. 
    
    Let $x_1 \coloneqq gx,\, x_2 \coloneqq \gamma gx,\, h \coloneqq P_L(x_1)$, and $\zeta \coloneqq (v_1(g))^+$. 
    Letting $m$ denote the midpoint of $[h, \gamma h]$, by Lemma \ref{passes near midpoint} there is a point $z \in (\zeta, \gamma \zeta)$ with $d(z, m) = \bfO(1)$, thus, by triangle inequality, $d(h, z) = \bfO(1)$.
    Since $d(gx, L)$ is large enough and $(\zeta, z]$ and $(\zeta, h]$ converge (see Lemma \ref{same endpoint converge}), we can find $x_1' \in (\zeta, z]$ with $d(x_1, x_1') < 1$. 
    The same argument applied to $(\gamma \zeta, z]$ and $(\gamma \zeta, \gamma h]$ implies the existance of $x_2' \in (\gamma \zeta, z]$ with $d(x_2, x_2') < 1$. By triangle inequality we have
    \begin{align*}
        |d(x'_1, z) - d(x_1, h)| &\leq d(x'_1, x_1) + d(h, z) = \bfO(1),
    \end{align*}
    thus $d(x'_1, z) = d(x_1, h) + \bfO(1)$.
    Similarly, we obtain $d(x'_2, z) = d(x_2, \gamma h) + \bfO(1) = d(x_1, h) + \bfO(1)$, hence $d(x'_1, z) = d(x'_2, z) + \bfO(1)$.
    This means that $z$ is close to the midpoint of $[x'_1, x'_2]$.
    Since $[x_1, x_2]$ and $[x_1', x_2']$ have close endpoints, by Lemma \ref{close endpoints converge middle} they get exponentially close in the middle. More precisely, there exists $z' \in [x_1, x_2]$ with
    $d(z, z') = \bfO (e^{-d(gx, L)})$. We have
    \begin{align*} 
        d(gx, \gamma gx) - 2 d(gx, L) & = d(x_1, x_2) - d(x_1, h) - d(x_2, \gamma h) \\
        & = d(x_1, z') + d(z', x_2) - d(x_1, h) - d(x_2, \gamma h)\\
        &= \big( d(x_1, z') - d(x_1, h) \big) + \big(  d(x_2, z') - d(x_2, \gamma h) \big)\\
        &= \big( d(x_1, z) - d(x_1, h) \big) + \big(  d(x_2, z) - d(x_2, \gamma h) \big) + \bfO({ e^{-d(gx, L)}  })\\
        & = \beta(\zeta, z, h) + \beta(\gamma \zeta, z, \gamma h) +
        \mathbf{O}(e^{-d(gx, L)}),
    \end{align*}
    where the last equality is by Lemma \ref{distance approximates Busemann}.
    Since $\beta(\zeta, z, h) + \beta(\gamma \zeta, \gamma h, z)$ remains the same if $z$ is replaced by an arbitrary $z''  \in (\zeta, \gamma \zeta)$, (\ref{step 1 formula}) follows.
    
    For $t > 0$, we can repeat the above argument for $x_1$ and $x_2$ replaced by $\pi (v_t)$ and $g. \pi(v_t)$ respectively, to obtain
    \begin{align*}
        d(\pi (v_t),  g. \pi(v_t)) - 2t = F_1(v) + \bfO(e^{-t}),
    \end{align*}
    thus
    \begin{align} \label{F_in_lim}
        F_1(v) = \lim_{t \to \infty} d(\pi (v_t),  g. \pi(v_t)) - 2t.
    \end{align}
    It is clear from the above formula that $F_1$ is $\gamma$--invariant.
\end{proof}

\begin{lemma} \label{step-two}
    Define the function $F_2 \from S(x) \to \RR$ by 
    \begin{align*}
        F_2(v) \coloneqq \beta(v^+, y, x).
    \end{align*}
    Then $F_2$ is \holder{}-continuous, and for every $g \in \Gamma$,
    \begin{align} \label{step 2 formula 0}
        d(gx, \gamma gy) - d(gx, \gamma gx) = F_2(g^{-1}. v_2(g)) + \bfO(e^{- d(gx, L) /2}).
    \end{align}
\end{lemma}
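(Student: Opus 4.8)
The plan is to obtain $(\ref{step 2 formula 0})$ from \lemref{distance approximates Busemann}, which rewrites a difference of distances as a value of the Busemann cocycle, together with the \hold{\frac12} dependence of $\beta$ on its first variable (\lemref{buseman lip-holder}). The \holder{}-continuity of $F_2$ is immediate: $F_2$ is the composition of $v \mapsto v^+$, which is \holder{}-continuous from $S(x)$ to $\bdry$ by \corref{plus-holder}, with $\zeta \mapsto \beta(\zeta, y, x)$, which is \hold{\frac12} since $x$ and $y$ are fixed.

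For the formula itself, I would first reduce, exactly as in \lemref{step-one}, to the case where $d(gx, L)$ is as large as needed (both sides of $(\ref{step 2 formula 0})$ are unchanged when $g$ is replaced by $\gamma^i g$, $i\in\ZZ$, so one may also arrange $d(o, P_L(gx)) = \bfO(1)$; and when $d(gx,L)$ is bounded the identity is trivial, the left side being $\bfO(d(x,y))$ by the triangle inequality and $F_2(g^{-1}v_2(g))$ bounded on the compact $S(x)$). Since $\gamma^{-1}$ is an isometry, $d(gx, \gamma gy) - d(gx, \gamma gx) = d(\gamma^{-1}gx, gy) - d(\gamma^{-1}gx, gx)$. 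Let $\xi \in \bdry$ be the endpoint of the geodesic ray issuing from $gx$ and passing through $\gamma^{-1}gx$, so that $\gamma^{-1}gx \in [gx, \xi)$ and, $\gamma$ being an isometry, $d(gx, \gamma^{-1}gx) = d(gx, \gamma gx) = 2 d(gx, L) + \bfO(1)$ by $(\ref{step 1 formula})$. Applying \lemref{distance approximates Busemann} with the far point $\gamma^{-1}gx$ on $[gx, \xi)$ and the nearby points $gx, gy$, and then using the $\Gamma$--equivariance of the cocycle, I obtain
\[
d(\gamma^{-1}gx, gy) - d(\gamma^{-1}gx, gx) = -\beta(\xi, gx, gy) + \bfO\big(e^{-d(gx,\gamma^{-1}gx)}\big) = \beta(g^{-1}\xi, y, x) + \bfO\big(e^{-2d(gx, L)}\big).
\]

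It then remains to replace $g^{-1}\xi$ by $(g^{-1}v_2(g))^+$ inside $\beta(\param, y, x)$. Since $\beta(\param, y, x)$ is \hold{\frac12} (with $x,y$ fixed), the visual metrics $d_o$ and $d_x$ are bi-Lipschitz equivalent, and $g$ carries $d_x$ to $d_{gx}$, one has
\[
\big| \beta(g^{-1}\xi, y, x) - \beta\big((g^{-1}v_2(g))^+, y, x\big) \big| \prec d_{gx}\big(\xi, (v_2(g))^+\big)^{1/2},
\]
and $\beta\big((g^{-1}v_2(g))^+, y, x\big) = F_2(g^{-1}v_2(g))$. So the whole statement reduces to $d_{gx}\big(\xi, (v_2(g))^+\big) \prec e^{-d(gx,L)}$. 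By definition $(v_2(g))^+$ is the endpoint of the ray from $gx$ through $h := P_L(gx)$, which it meets at distance exactly $d(gx, L)$; and $\xi$ is the endpoint of the ray from $gx$ extending $[gx, \gamma^{-1}gx]$. Two geodesic rays from $gx$ that lie within $\bfO_\gamma(1)$ of one another at distance $d(gx, L)$ from $gx$ stay that close on the whole interval $[0, d(gx,L)]$ by convexity, and therefore (by the visual-distance estimate recalled in \secref{background}) their endpoints are at $d_{gx}$-distance $\prec e^{-d(gx,L)}$. Hence everything comes down to the geometric claim that $[gx, \gamma^{-1}gx]$ passes within $\bfO_\gamma(1)$ of $h$, at distance $d(gx, L) + \bfO(1)$ from $gx$.

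This last claim is the technical heart of the lemma, and I would prove it by running the argument in the proof of \lemref{step-one} \emph{verbatim} with $\gamma$ replaced by $\gamma^{-1}$ (whose axis is again $L$): taking $\zeta := (v_1(g))^+$ one has $P_L(\zeta) = h$ and $P_L(\gamma^{-1}\zeta) = \gamma^{-1}h$, \lemref{passes near midpoint} applied to $\gamma^{-1}$ produces a point of $(\zeta, \gamma^{-1}\zeta)$ within $\bfO_\gamma(1)$ of the midpoint of $[h, \gamma^{-1}h]$ and hence within $\bfO_\gamma(1)$ of $h$, and the exponential convergence of rays with a common endpoint (\lemref{same endpoint converge}, applied to $gx \in (\zeta, h]$ and $\gamma^{-1}gx \in (\gamma^{-1}\zeta, \gamma^{-1}h]$) together with \lemref{close endpoints converge middle} shows that $[gx, \gamma^{-1}gx]$ itself runs within $\bfO_\gamma(1)$ of that point, at distance $d(gx, L) + \bfO(1)$ from $gx$. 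Beyond this the argument is only bookkeeping with the Busemann cocycle and the \holder{} estimates already in hand, so I do not expect further obstacles.
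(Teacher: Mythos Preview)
Your argument is correct and follows the same route as the paper: rewrite the left side as a difference of distances from a single far point, apply \lemref{distance approximates Busemann} to turn it into a Busemann cocycle, and then use the \hold{\frac12} dependence of $\beta$ in its first variable to replace the auxiliary boundary point by $(g^{-1}v_2(g))^+$. The only organizational difference is that the paper pulls back by $(\gamma g)^{-1}$ in one step (so the far point is $\bar{x}_1 = (\gamma g)^{-1}gx$ and everything lives over $x$), and it reuses the point $z'$ already produced in \lemref{step-one} rather than rerunning that lemma for $\gamma^{-1}$: since $z' \in [gx,\gamma gx]$ with $d(z',\gamma h)=\bfO(1)$, the point $\gamma^{-1}z'$ lies on $[gx,\gamma^{-1}gx]$ within $\bfO(1)$ of $h$, which is exactly the geometric claim you isolate. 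So your last paragraph is unnecessary work; otherwise the two proofs coincide.
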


\begin{proof}
    The map $v \mapsto v^+$ is \holder{}-continuous by Corollary \ref{plus-holder} and $\beta(\param, \param, \param)$ is \holder{}-continuous by Lemma \ref{buseman lip-holder}, thus $F_2$ is \holder{}-continuous.
    Fix $g \in \Gamma$ and let $x_1,\, x_2,\, h,\, z'$ be as in Lemma \ref{step-one}.
    Letting $\bar{x}_1 \coloneqq (\gamma g)^{-1} x_1$, we have 
    \begin{align*}
        d(gx, \gamma gy) - d(gx, \gamma gx) = d(x_1, \gamma gy) - d(x_1, x_2)
            = d(\bar{x}_1, y ) - d(\bar{x}_1, x).
    \end{align*}
    Set $\bar{h} \coloneqq (\gamma g)^{-1} \gamma h$ and $\bar{z}' \coloneqq (\gamma g)^{-1} z'$, let $v$ and $w$ be tangents to $[x, \bar{h}]$ and $[x, \bar{z}']$ at $x$ respectively, and note that $v = g^{-1}.v_2(g)$.
    Since $d(\bar{x}_1, x) = d(x_1, x_2) = 2 d(gx, L) + \bfO(1)$, Lemma \ref{distance approximates Busemann} implies 
    \begin{align*}
        d(\bar{x}_1, y ) - d(\bar{x}_1, x) = \beta(w^+, y, x) + \bfO(e^{-2d(gx, L)}).
    \end{align*}
    Since $d(\bar{h}, \bar{z}') = d(\gamma h, z') = \bfO({1})$ and $d(x, \bar{h}) = d(gx, L)$, part (\ref{all in mtilde}) of Lemma \ref{same endpoint converge} implies $d_x(v^+, w^+) = \bfO(e^{-d(gx, L)})$. Thus, 
    by Lemma \ref{buseman lip-holder}, 
    \begin{align*}
        \beta(w^+, y, x) = \beta(v^+, y, x) + \bfO(e^{-d(gx, L)/2}),
    \end{align*}
    which concludes the proof.
\end{proof}

\section{The test functions} \label{test-intro}

The goal of this section is to set the stage for Section \ref{count-control}, in which we obtain asymptotics for adjusted counts with a power saving error term. (See Theorem \ref{control counting}.)
This section and the next closely follow sections 2-4 of \cite{PPCommonPerp}, and Theorem \ref{control counting} is a generalization of Thoerem 3 of that paper (when the locally convex sets $D^\pm$ in the statement of that theorem are replaced by $\Pi(x)$ and $\Pi(L)$). Since we make frequent references to \cite{PPCommonPerp}, it is useful to summarize the main differences between our exposition and theirs.
\begin{enumerate} [(i)]
    \item \label{thicken} Instead of a \emph{thickening} (or a \emph{dynamical neighbourhood}) of $\partial^1 L$ (resp. $S(x)$), we first flow $\partial^1 L$ (resp. $S(x)$) by a \holder{}-continuous function, and then consider the dynamical thickening of this latter set. (Compare (\ref{defoftest}) and Equation (16) of \cite{PPCommonPerp}.)
    \item Instead of the \Ham distance on the strongly stable foliation, we use the distance induced by $d_{\mtilde}$. This is essential to establish the RHC property. (See Remark \ref{two-ham}.)
    \item \label{t and t two} Instead of flowing the dynamical neighbourhoods mentioned in (\ref{thicken}) by $\frac{t}{2}$, we flow the neighbourhood corresponding to $\partial^1 L$ by time $t$ and keep the neighbourhood corresponding to $S(x)$ fixed. This keeps our presentation closer to the original paper of \cite{roblin-thesis}.
\end{enumerate}

As mentioned in the introduction, our main contributions are as follows.
\begin{enumerate} [(a)]
    \item \label{rhcdone} We verify the RHC condition under the assumptions of Theorem \ref{main-theorem}. (See Section \ref{rhc-section}.) 
    \item \label{smoothdone} We provide the details of a `smoothing argument' which is only sketched in \cite{PPCommonPerp, PPSkin}. (See Appendix \ref{construct-chi}.)
\end{enumerate}
Both of the above are necessary to obtain a power saving error term. In fact, (\ref{rhcdone}) is used in the proof of Lemma \ref{j almost one}, and (\ref{smoothdone}) is used in the proof of Lemma \ref{phi-mixing}.

For a subset $A$ of a topological space $X$, we denote the characteristic function of $A$ by $\mathbbm{1}(A)$. For a compactly supported function $\varphi \from T^1 \mtilde \to \RR$, define the function $\bar{\varphi} \from T^1 M \to \RR$ by
\begin{align} \label{pushdown-point}
    \bar{\varphi}(u) \coloneqq \sum_{u' \in \Pi^{-1}(u)} \varphi(u'),
\end{align}
and note that
\begin{align*}
    \int_{T^1 M} \bar{\varphi} = 
    \int_{T^1 \mtilde} \varphi.
\end{align*}

Fix a point $y_0 \in L$, and note that the set $\Delta_\gamma \coloneqq \{u \in T^1 \mtilde: P_L(u^+) \in [y_0, \gamma. y_0)\}$ is a fundamental domain for the action of $\gamma$ on $T^1 \mtilde \setminus \{u: u^+ \in \fix(\gamma)\}$. For a $\gamma$--invariant function $\psi \from T^1 \mtilde \to \RR$ such that $\psi \times \mathbbm{1}(\Delta_\gamma)$ is compactly supported, define the function $\bar{\psi} _\gamma \from T^1 M \to \RR$ by
\begin{align} \label{pushdown-line}
    \bar{\psi}_\gamma (u) \coloneqq 
    \sum_{u' \in \Pi^{-1} (u) \cap \Delta_\gamma} \psi(u'),
\end{align}
and note that 
\begin{align} \label{int-pushdown}
    \int_{T^1 M} \bar{\psi}_\gamma = 
    \int_{T^1 \mtilde} \psi \times \mathbbm{1}(\Delta_\gamma).
\end{align}

A \emph{Patterson-Sullivan density} for $\Gamma$ is a family $(\mu_x)_{x \in \mtilde}$ of finite measures such that for every $x \in \mtilde$ and $\gamma \in \Gamma$ we have $\gamma_* \mu_x = \mu_{\gamma x}$; and for all $x, y \in \mtilde$ and (almost) all $\zeta \in \bdry$, we have
$$\frac{d\mu_x}{d\mu_y}(\zeta) = e^{-\delta \beta(\zeta, x, y)}.$$
Such a family exists and is unique upto a multiplicative constant. We pick the fmaily that makes $\mbarBM$, defined just above Proposition \ref{integral-of-test}, a probability measure. To simplify the notation, from now on we set $\beta_o(\zeta, x) \coloneqq \beta(\zeta, x, o)$, for $x \in \mtilde$ and $\zeta \in \bdry$.

Fix $u \in T^1 \mtilde$ for the rest of this paragraph. Define the \emph{strongly stable manifold} of $u$ by 
\begin{align*}
    \wss{u} \coloneqq \{v \in T^1 \mtilde: v^+ = u^+, \,\text{ and }\, \beta(u^+, \pi(u), \pi(v)) = 0 \}.
\end{align*}
Using the homeomorphism $v \mapsto v^-$ from $\wss{u}$ to $\bdry - \{u^+\}$, define the measure $\muss_u$ on $\wss{u}$ by
\begin{align*}
    d \muss_u (v) \coloneqq e^{- \delta \beta_o(v^-, \pi(v))}\, d\mu_o(v^-).
\end{align*}
More precisely, denoting the pushforward of $\mu_o$ to $\wss{u}$ under the above-mentioned homeomorphism by $\mu'_o$, we have
\begin{align*}
    d \muss_u(v) = e^{- \delta \beta_o(v^-, \pi(v))}\, d\mu'_o(v).
\end{align*}
Define the \emph{stable manifold} of $u$ by $\ws(u) \coloneqq \{v \in T^1 \mtilde: v^+ = u^+ \}$. Using the homeomorphism $(v, t) \mapsto w = \calG_t(v)$ from $\wss{u} \times \RR$ to $\ws(u)$, define the measure $\mus_u$ on $\ws(u)$ by
\begin{align*}
    d \mus_u(w) \coloneqq e^{-\delta t} \,d\muss_u(v) \, dt.
\end{align*}
Note that $\muss_u$ (resp. $\mus_u$) only depends on $\wss{u}$ (resp. $\ws(u)$).
Given $r > 0$, define the strongly stable ball of radius $r$ centered at $u$ by
\begin{align*}
    \bss{u, r} \coloneqq \{ v \in \wss{u} : d_{\mtilde} (\pi(u), \pi(v)) \leq r \},
\end{align*}
and let $\mub{u, r} \coloneqq \muss_u (\bss{u, r} )$.
For $\tau >0$, define 
\begin{align*}
    \thickbs{\tau}{u, r} \coloneqq \bigcup_{t \in [-\tau, \tau]} 
    \calG_t(\bss{u, r}),
\end{align*}
and for a subset $S \subset T^1 \mtilde$, define $\thickbs{\tau}{S, r}$ to be the union of $\thickbs{\tau}{ u, r}$ for all $u \in S$.

\begin{remark} \label{two-ham}
    In \cite{PPCommonPerp}, the strongly stable balls are defined using the \Ham distance instead of $d_{\mtilde}$.
    In that paper (and in many other references, ex. \cite{pps-book, PPSkin, pulin-tree-book}) the \Ham distance between $v, v' \in \wss{u}$ is defined by  
    $$d^{\ham}_u (v, v') \coloneqq \lim_{t \to +\infty} e^{\frac{1}{2} d(\pi(v_{-t}),\, \pi(v'_{-t})) - t }.$$ 
    In the original paper of \Ham \cite{hamdist}, however, for a fixed positive real number $R$, a different distance $d_u^{\ham, R}$ is defined as follows. (This distance is denoted by $\eta_{u, R}$ in \cite{hamdist}). For $t \in \RR$, let $d_t$ denote the distance on $\wss{u_t}$ induced by restriction of the Riemannian structure of $\mtilde$ to its submanifold $\pi(\wss{u_t})$. For $v, v' \in \wss{u}$, let $T$ be such that $d_{-T}(v_{-T}, v'_{-T}) = R$, and set  
    $$d_u^{\ham, R}(v, v') \coloneqq e^{-T}.$$
    The distances $d^{\ham}$ and $d^{\ham, R}$ are the same upto a multiplicative constant that only depends on $\clow$ and $R$, and the equality $d^{\ham}_{u_t}(v_t, v'_t) = e^{-t} d^{\ham}_u(v, v')$ also holds for $d^{\ham, R}$.

    To verify the RHC condition in Section \ref{rhc-section}, we need the boundary of $\bss{u, r}$ to be $C^1$, thus we can use $d_u^{\ham, R}$ (but not $d_u^{\ham}$) instead of $d_{\mtilde}$ in the definition of $\bss{u, r}$.
    Finally, it is worth mentioning that the strongly stable balls considered in \cite{roblin-thesis} are different from both ours and \cite{PPCommonPerp}. Compare $\thickbs{\frac{r}{2}}{u, r}$ with $K^+(\pi(u), r, u^+)$ defined in Chapter 4 of \cite{roblin-thesis}.
\end{remark}

For the following definitions let $D$ be either a point $x \in \mtilde$, or equal to $L$.
In the former case define $\partial^1 D \coloneqq S(x)$, and in the latter, set $\partial^1 D$ to be the unite normal bundle of $L$. 
Let $F$ be a real-valued function on $\partial^1 D$, and define
\begin{align*}
    \calG^F \from \partial^1 D \to T^1 \mtilde \gap \text{ by } \gap
    \calG^F(u) = \calG_{F(u)} (u).
\end{align*}
For $u \in T^1 \mtilde$, set $p_D(u) \coloneqq P^1_D(u^+)$, and for $\tau, r>0$ define the test function $\test{D}{F}{r, \tau} \from T^1 \mtilde \to \RR$ by
\begin{align} \label{defoftest}
    \test{D}{F}{r, \tau} (u) \coloneqq \frac{1}{
        2 \tau \mub{\calG^F(p_D u), r}
        } \times
    \mathbbm{1} \big( \thickbs{\tau}{ \calG^F(\partial^1 D), r} \big) (u).
\end{align}
$\thickbs{\tau}{ \calG^F(\partial^1 D), r}$ is called a thickening, or a dynamical neighbourhood of $\calG^F(\partial^1 D)$.
By using (\ref{pushdown-point}) or (\ref{pushdown-line}), depending on whether $D$ is a point or is equal to $L$, $\test{D}{F}{r, \tau}$ descends to a function from $T^1 M$ to $\RR$, which we denote by $\bartest{D}{F}{r, \tau}$.

We define the \emph{skinning measure} $\sigma_D$ on $\partial^1 D$ by
\begin{align} \label{skin def}
    d \sigma_D(v) \coloneqq e^{-\delta \beta_o(v^+, \pi(v)) } d\mu_o(v^+).
\end{align}
If $D = x$ is a point, then $d\sigma_D(v) = d \mu_x(v^+)$, and for $D = L$, $\sigma_D$ is $\gamma$--invariant (\cite{PPSkin} Proposition 4(ii)). For a real valued function $F_1$ defined on $\partial^1 L$, define  
\begin{align} \label{sigmafdef_1}
    \sigma_\gamma (F_1) \coloneqq \int_{\partial^1[y_0, \gamma. y_0)} e^{\delta F_1(u)}\, d  \sigma_{L}(u) \quad \text{ for some } \quad y_0 \in L,
\end{align}
and for a real valued function $F_2$ defined on $S(x)$, set 
\begin{align} \label{sigmafdef_2}
    \sigma_x(F_2) \coloneqq \int_{S(x)} e^{\delta F_2(u)}\, d  \sigma_x(u).
\end{align}

For $u \in T^1 \mtilde$, let $\zeta \coloneqq u^+$, $\eta \coloneqq u^-$, and $t \coloneqq d\big( P_{(\eta, \zeta)} o, \pi(u)\, \big)$. The map $u \mapsto (\zeta, \eta, t)$ gives a homeomorphism between $T^1 \mtilde$ and $\bdry \times \bdry \setminus \{(\zeta, \zeta): \zeta \in \bdry\}$. This is called the \emph{Hopf parametrization}. Note that the parameter $t$ above is the \emph{signed distance} between $P_{(\eta, \zeta)} o$ and $\pi(u)$, and the sign is chosen in a way that $\calG_T$ in Hopf coordinates is given by $(\zeta, \eta, t) \mapsto (\zeta, \eta, t+T)$.
We can write the \emph{Bowen-Margulis measure} in these coordinates as 
\begin{align*}
    d\mBM = e^{-\delta d_0(\zeta, \eta)  }\, d \mu_{o} (\zeta)\, d \mu_{o} (\eta) \, dt,
\end{align*}
where 
$d_o(\zeta, \eta) \coloneqq \beta_o(\zeta, y) + \beta_o(\eta, y)$ for some (and hence all) $y \in (\zeta, \eta)$. 
Since $\mBM$ is $\Gamma$--invariant, it descends to a measure on $T^1 M$, which we denote by $\mbarBM$. 


\begin{proposition} \label{integral-of-test}
    Let $D$ be either a point $x \in \mtilde$, or equal to $L$, and let $F$ be a real valued function on $\partial^1 D$. Then we have
    \begin{align*}
        \int_{T^1 M} 
        \bartest{D}{F}{r, \tau} (u) \, d \mbarBM(u) = \sigma_\bullet(F),
    \end{align*}
    where $\bullet = \gamma$ (resp. $\bullet = x$) when $D = L$ (resp. $D = x$).
\end{proposition}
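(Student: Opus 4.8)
The plan is to compute the integral directly in Hopf coordinates, using the fact that the test function $\test{D}{F}{r,\tau}$ is, by construction, a normalized indicator of a dynamical thickening of $\calG^F(\partial^1 D)$, so that its $\mBM$-integral disintegrates along the strong stable foliation. First I would lift the computation to the universal cover: by the definition of the descended measure $\mbarBM$ and the descended function $\bartest{D}{F}{r,\tau}$ via \eqref{pushdown-point} or \eqref{pushdown-line}, together with the integral identities recorded just after those equations, it suffices to show
\begin{align*}
    \int_{T^1 \mtilde} \test{D}{F}{r, \tau}(u)\, \mathbbm{1}(\Delta)\, d\mBM(u) = \sigma_\bullet(F),
\end{align*}
where $\Delta$ is all of $T^1\mtilde$ when $D = x$ and $\Delta = \Delta_\gamma$ when $D = L$ (here $\Delta_\gamma$ is the fundamental domain $\{u : P_L(u^+) \in [y_0, \gamma y_0)\}$, which is why the $\gamma$-invariant construction $\bar\psi_\gamma$ and the normalization $\sigma_\gamma(F_1)$ involve $[y_0,\gamma y_0)$).

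The main step is to choose the right coordinates on the support $\thickbs{\tau}{\calG^F(\partial^1 D), r}$ of the test function. Every $u$ in this set lies on $\calG_t(\bss{\calG^F(p_D u), r})$ for a unique $t \in [-\tau, \tau]$ and a unique base vector on $\calG^F(\partial^1 D)$, and the base vector is itself parametrized by $p_D(u) = P^1_D(u^+) \in \partial^1 D$. So the natural coordinates are: the point $w = p_D(u) \in \partial^1 D$ (equivalently its endpoint, which moves over $\mu_o$-almost all of $\bdry$ by the definition of the skinning measure), the position $v \in \bss{\calG^F(w), r}$ inside the strong stable ball (carrying the measure $\muss$), and the flow parameter $t \in [-\tau, \tau]$. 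In these coordinates the Bowen–Margulis measure factors as $d\mBM = (\text{skinning factor on } w)\, d\muss(v)\, dt$ — this is exactly the content of the product structure $d\mBM = e^{-\delta d_o(\zeta,\eta)} d\mu_o(\zeta) d\mu_o(\eta)\, dt$ rewritten via the strong stable disintegration $d\mus_u(w) = e^{-\delta t} d\muss_u(v)\, dt$ and the definition of $\sigma_D$ in \eqref{skin def}. The normalizing denominator $2\tau\, \mub{\calG^F(p_D u), r}$ is constant along each fiber over a fixed $w$, so integrating out $v$ over $\bss{\calG^F(w), r}$ cancels $\mub{\calG^F(w), r}$, and integrating $t$ over $[-\tau,\tau]$ cancels $2\tau$; the only place the adjustment function $F$ enters the surviving integral is through the Busemann shift $\beta_o(w^+, \pi(\calG^F(w)))$ versus $\beta_o(w^+, \pi(w))$, which differs by exactly $F(w)$ since flowing by $F(w)$ changes the Busemann position by $F(w)$. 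This produces the factor $e^{\delta F(w)}$ and leaves $\int_{\partial^1 D} e^{\delta F} \, d\sigma_D$ (restricted to $\partial^1[y_0,\gamma y_0)$ in the $D = L$ case because of $\mathbbm{1}(\Delta_\gamma)$), which is precisely $\sigma_\bullet(F)$ by \eqref{sigmafdef_1}–\eqref{sigmafdef_2}.

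I expect the main obstacle to be bookkeeping the change of variables cleanly — specifically, verifying that the map $u \mapsto (w, v, t)$ is a measurable bijection from the support of $\test{D}{F}{r,\tau}$ onto $\{(w,v,t) : w \in \partial^1 D,\ v \in \bss{\calG^F(w), r},\ t \in [-\tau,\tau]\}$ off a $\mBM$-null set (one must rule out overlaps between the thickenings of different base vectors, or argue that any such overlap is accounted for correctly), and pinning down the exact form of the Jacobian relating $d\mBM$ to $d\sigma_D(w)\, d\muss(v)\, dt$, including the $e^{-\delta t}$ weight from $d\mus_u$ and checking the Busemann cocycle identity $\beta_o(w^+, \pi(\calG_t v)) = \beta_o(w^+, \pi(\calG^F(w))) - t$ that makes the $t$-integral elementary. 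Once these are in place the computation is a short unwinding of definitions; this is the standard skinning-measure calculation of \cite{PPCommonPerp, PPSkin} adapted to the present "flow-then-thicken" convention noted in \eqref{thicken} of Section \ref{test-intro}.
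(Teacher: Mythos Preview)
Your proposal is correct and follows essentially the same route as the paper: lift to $T^1\mtilde$ via \eqref{int-pushdown}, disintegrate $\mBM$ along the fibration $u \mapsto p_D(u)$ over the skinning measure $\sigma_D$, then integrate out the strong-stable and flow variables so the normalizing factor $2\tau\,\mub{\calG^F(v),r}$ cancels and the Busemann cocycle produces $e^{\delta F(v)}$. The Jacobian/bookkeeping step you flag as the main obstacle is exactly what the paper outsources to Proposition~8 of \cite{PPSkin}, which gives the conditional measures $e^{\delta\beta(v^+,\pi(v),\pi(w))}\,d\mus_v(w)$ on each fiber $p_D^{-1}(v)=\ws(v)$; with that in hand your worry about overlaps disappears, since the fibers are indexed by $u^+$ via $p_D$ and are automatically disjoint.
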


\begin{proof}
    We prove the proposition for $D = L$. The proof is similar for the case where $D$ is a point.
    Let $U_L \coloneqq \{u \in T^1 \mtilde: u^+ \notin \lbdry \}$. Then $p_L \from U_L \to \partial^1 L$ is a fibration, and by Proposition 8 of \cite{PPSkin}, under this fibration, the Bowen-Margulis measure on $U_L$ disintegrates over the skinning measure $\sigma_L$ on $\partial^1 L$, with the conditional measures 
    $e^{\delta \beta(v^+, \pi(v), \pi(w))}\, d \mus_v(w)$ on $p_L^{-1}(v) = \ws(v)$ for $v \in \partial^1 L$.

    Fix a point $y_0 \in L$, and note that $\Delta_\gamma \coloneqq \{u \in T^1 \mtilde: P_L(u^+) \in [y_0, \gamma. y_0)\}$ is a fundamental domain for the action of $\gamma$ on $U_L$. By (\ref{int-pushdown}) we have
    \begin{align} \label{interated-int}
        \int_{T^1 M} \bartest{L}{F}{r, \tau} (u) \,d \mbarBM(u) &= 
        \int_{\Delta_\gamma} \test{L}{F}{r, \tau} (u) \,d \mBM(u) \nonumber \\
        &= \int_{\partial^1 [y_0, \gamma. y_0)} 
        \frac{1}{ 2\tau \mub{\calG^F(v), r} } 
        \int_{\thickbs{\tau}{\calG^F(v), r} } 
        e^{\delta \beta(v^+, \pi(v), \pi(w))} \,d \mus_v(w) \,d\sigma_L(v).
    \end{align}
    Using the homeomorphism $\wss{\calG^F(v)} \times \RR \to \ws(v)$ that sends $(w', t)$ to $\calG_t (w')$, we have
    \begin{align*}
        d \mus_v(w) = e^{-\delta t} \,d\muss_{\calG^F(v)} (w')\, dt = 
        e^{-\delta \beta(v^+, \pi(w'), \pi(w) ) } \,d\muss_{\calG^F(v)} (w') \,dt.
    \end{align*}
    Hence we can compute
    \begin{align*}
        \int_{\thickbs{\tau}{\calG^F(v), r} } 
        e^{\delta \beta(v^+, \pi(v), \pi(w))} \,d \mus_v(w) = 
        \int_{\bss{\calG^F(v), r} \times [-\tau, \tau]} 
        e^{\delta \beta(v^+, \pi(v), \pi(w)) +
         \delta \beta(v^+, \pi(w), \pi(w') ) } \,d\muss_{\calG^F(v)} (w') \,dt.
    \end{align*}
    Since
    \begin{align*}
        \beta(v^+, \pi(v), \pi(w)) + \beta(v^+, \pi(w), \pi(w') ) =
        \beta(v^+, \pi(v), \pi(w')) = \beta(v^+, \pi(v), \pi(\calG^F(v))) = F(v),
    \end{align*}
    the above integral is equal to 
    $2\tau e^{\delta F(v)} \mub{\calG^F(v), r} $. Plugging this into Equation (\ref{interated-int}) concludes the proof of the proposition.
\end{proof}

\section{Adjusted counts} \label{count-control}

Let $\gamma \in \Gamma$, and let $h \from \Gamma \to \RR$ be a $\gamma$--invariant function, that is, $h(\gamma g) = h(g)$ for all $g \in \Gamma$. For such a function, define
\begin{align*}
    \calB_\gamma^h (T) \coloneqq \{ \gmcycl. g \in \gmcycl \backslash \Gamma : h(g) \leq T \}
\end{align*}
to be the set of $\gmacycle$--orbits on which the value of $h$ is at most $T$. For the following theorem, recall that $-\clow^2$ is the lower bound on the curvature of $M$.
\begin{theorem} \label{control counting}
    Assume $M$ is two-dimensional, or $M$ is of dimension at least three and $\clow \leq \clowval$. Then for every $\alpha, \kappa > 0$, there exists a constant $\kappa'$, only depending on $\alpha, \kappa$, and the geometry of $M$, such that the following holds.
    Let $x \in \mtilde,\, \id \neq  \gamma \in \Gamma$, and $L$ denote the axis of $\gamma$. Let
    \begin{align*}
        F_1 \from \partial^1 L \to \RR, \quad \text{ and }\quad F_2 \from S(x) \to \RR
    \end{align*}
    be \hold{\alpha} functions, and assume that $F_1$ is $\gamma$--invariant. Let the $\gamma$--invariant function $h \from \Gamma \to \RR$ satisfy
    \begin{align} \label{h_eq}
        h(g) = d(gx, L) - F_1 (P^1_L(gx)) - F_2(g^{-1}. P^1_{gx} (L)) + \bfO ({e^{-\kappa d(gx, L) } }) 
    \end{align}
    for $g \in \Gamma$. Then for $T > 0$,
    \begin{align*}
        \# \calB^h_\gamma (T) = (1 + \bfO_{F_1, F_2} (e^{- \kappa' T} )  )\,
        \frac{e^{\delta T}}{\delta} \sigma_\gamma (F_1) \sigma_x(F_2),
    \end{align*}
    where $\sigma_\gamma (F_1)$ and $\sigma_x(F_2)$ are as in (\ref{sigmafdef_1}) and (\ref{sigmafdef_2})
\end{theorem}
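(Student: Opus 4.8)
The plan is to reduce the counting problem for $\calB^h_\gamma(T)$ to an equidistribution statement for the pushed-down test functions $\bartest{L}{F_1}{r,\tau}$ and $\bartest{x}{F_2}{r,\tau}$ under the geodesic flow, in the style of Roblin and of \cite{PPCommonPerp}. First I would observe that, because of the exponentially small error in \eqref{h_eq} and the fact that $\# \calB^h_\gamma(T) \asymp e^{\delta T}$, it suffices to estimate, for each fixed $T$, the cardinality of $\{\gmcycl . g : d(gx,L) - F_1(P^1_L(gx)) - F_2(g^{-1}.P^1_{gx}(L)) \le T\}$ up to a multiplicative factor $(1+\bfO(e^{-\kappa' T}))$: replacing $h$ by this ``clean'' quantity changes the count only on a shell of elements with $d(gx,L) = T + \bfO(e^{-\kappa T})$, whose cardinality is $\bfO(e^{(\delta-\kappa'')T})$ by the known coarse asymptotics. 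The quantity $d(gx,L) - F_1(P^1_L(gx)) - F_2(g^{-1}.P^1_{gx}(L))$ is exactly the ``$\calG^{F_1}$-to-$\calG^{F_2}$'' common-perpendicular length between the flowed submanifolds $\calG^{F_1}(\partial^1 L)$ and $g.\calG^{F_2}(S(x))$, which is the setup to which Roblin's mixing argument applies.

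Next I would set up the integral to be estimated. For parameters $r,\tau>0$ and a time $t>0$, consider
\begin{align*}
    I_\gamma(t) \coloneqq \int_{T^1 M} \bartest{L}{F_1}{r,\tau}(\calG_t v)\, \bartest{x}{F_2}{r,\tau}(v)\, d\mbarBM(v).
\end{align*}
Unfolding both test functions via \eqref{pushdown-point} and \eqref{pushdown-line}, one sees that $I_\gamma(t)$ is a weighted count of those $\gmcycl.g$ for which the flowed thickening of $\calG^{F_1}(\partial^1 L)$ meets, after translating by $g$, the thickening of $\calG^{F_2}(S(x))$ at flow-time roughly $t$; each such $g$ contributes a weight of size $1 + \bfO(\text{error from geometry of the thickenings})$, and the constraint forces $d(gx,L) - F_1 - F_2 \in [t - \bfO(\tau), t+\bfO(\tau)]$. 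The geometric lemmas of Section \ref{hold_section} — in particular the exponential convergence of geodesics with a common endpoint (Lemma \ref{same endpoint converge}), the exponential contraction of projections (Proposition \ref{proj-close}), and the \holder{}-continuity statements — are used to control the local product structure and to show that the weight of each contributing $g$ is $1 + \bfO(r^\alpha + \tau^2 + e^{-\kappa t})$, uniformly, after correcting for the Jacobian of the holonomy between strong-stable leaves (this is the role of the RHC condition, invoked through Theorem \ref{radius-hold-cont}). On the other hand, by mixing of the geodesic flow for $\mbarBM$ with an exponential rate — which holds under the stated curvature hypotheses, and whose required smoothed form is supplied by the Appendix construction referenced in item (\ref{smoothdone}) — together with Proposition \ref{integral-of-test}, one gets $I_\gamma(t) = \sigma_\gamma(F_1)\,\sigma_x(F_2) + \bfO(e^{-c t} \|\cdot\|_{C^k})$ for suitable Sobolev/\holder{} norms of the (smoothed) test functions, provided $r,\tau$ are not taken too small relative to $t$.

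Then I would integrate over $t$: summing (or integrating) the identity $I_\gamma(t) \approx \sigma_\gamma(F_1)\sigma_x(F_2)$ against $e^{\delta t}\,dt$ from $0$ to $T$ produces $\tfrac{e^{\delta T}}{\delta}\sigma_\gamma(F_1)\sigma_x(F_2)$ as main term, because the factor $e^{\delta t}$ is exactly the growth rate built into $\mus$, while the left side telescopes into $2\tau \cdot (\text{something comparable to } \# \calB^h_\gamma(T))$ up to the thickening corrections. Optimizing the free parameters $r = r(T) = e^{-aT}$, $\tau = \tau(T) = e^{-bT}$ against the mixing error (which deteriorates like $e^{(cb' + \dots)T}$ as $r,\tau$ shrink) and the geometric error $\bfO(r^\alpha + \tau^2 + e^{-\kappa t})$ gives a net power saving $e^{-\kappa' T}$ with $\kappa'$ depending only on $\alpha$, $\kappa$, the mixing rate $c$, and $\clow$; this is precisely the balancing performed in \cite{PPCommonPerp} and it is also where the dimension/curvature hypothesis enters, since it is needed both for the RHC property (hence the controlled Jacobian) and for exponential mixing. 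The main obstacle, and the step requiring the most care, is establishing the uniform $1 + \bfO(r^\alpha + \tau^2 + e^{-\kappa t})$ estimate for the weight of each contributing group element: this is where one must combine the RHC-controlled leafwise holonomy Jacobian with the \holder{} regularity of $F_1, F_2$ and the exponential convergence estimates, and then check that these errors survive the division by $\mub{\cdot,r}$ and the integration against $e^{\delta t}\,dt$ without destroying the power saving — essentially the content of Lemma \ref{j almost one} and Lemma \ref{phi-mixing} referenced in the text.
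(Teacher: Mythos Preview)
Your proposal follows essentially the same approach as the paper: set up the correlation integral of the two pushed-down test functions, unfold it as a sum over $\gmcycl\backslash\Gamma$, show each term integrates (against $e^{\delta t}\,dt$) to $1+\bfO(\tau)$ via the RHC property (Lemma \ref{j almost one}), compute the integral side via exponential mixing applied to smoothed test functions (Lemma \ref{phi-mixing}), and optimize $\tau=e^{-\kappa'T}$.

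Two small corrections. First, the correlation integral should pair $\calG_t\cdot\barphi^1_\tau$ with $\iota\cdot\barphi^2_\tau$, not with $\barphi^2_\tau$: one of the two test functions must be flipped so that the supports can actually intersect along the common perpendicular from $L$ to $gx$. Second, there is no ``telescoping into $2\tau\cdot(\text{count})$'': the factor $\tfrac{1}{2\tau}$ is already built into the test functions, so after integrating in $t$ each contributing $g$ gives $J_\tau^\infty(g)=1+\bfO(\tau)$ directly, and the sum is $\#\calH(T)$ up to that error. Also, the paper keeps the radius $R$ fixed throughout and optimizes only over $\tau$; varying $r$ as well is unnecessary.
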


Assuming the above theorem, we give a proof of Theorem \ref{main-theorem}.

\begin{proof} [Proof of Theorem \ref{main-theorem}] 
    We say that an element $g \in \Gamma$ is primitive if $g = \hat{g}^k$ (for some $\hat{g} \in \Gamma$) implies $k = \pm 1$. Let the primitive element $\gamhat$ be such that $\gamma = \gamhat^k$ for some $k \in \NN$. 
    Since
    \begin{align*}
        d(x, g^{-1}\gamma g .y) = d(gx, \gamma g.y),
    \end{align*}
    and the centralizer of $\gamma$ is $\gamhatcycl$, the map $g \mapsto g^{-1} \gamma g.y$ gives a one-to-one correspondance between the sets
    \begin{align*}
        \{\gamhatcycl \cdot g: d(gx, \gamma g.y) \leq T \}, \quad \text{ and } \quad
        B_T(x) \cap \conj_\gamma \cdot y.
    \end{align*} 
    Denoting the axis of $\gamma$ by $L$, by Proposition \ref{reduction-prop} there exists a $\gamma$--invariant function $F_1$ on $\partial^1 L$ and a function $F_2$ on $S(x)$, both \holder{}-continuous, such that for every $g \in \Gamma$ we have
    \begin{align*}
        d(gx, \gamma g. y) = 2 d(gx, L) + F_1 (P^1_L(gx)) + F_2(g^{-1}. P^1_{gx} (L)) + \bfO_{x, y, \gamma} (e^{- d(gx, L)/2 }).
    \end{align*}
    Note that by (\ref{F_in_lim}), $F_1$ is $\gamhat$--invariant, thus, applying Theorem \ref{control counting} to $\gamhat, -F_1, -F_2$ and $h(g) \coloneqq d(gx, \gamma g.y)/2$, the desired result follows for 
    $$\sigma \coloneqq \frac{1}{\delta}\, \sigma_{\gamhat} (\frac{-F_1}{2})\, \sigma_x (\frac{-F_2}{2}).$$
\end{proof}

\nextalfa{Fi-holder}
\nextkpa{h error}

For the rest of this section, we fix $\alfa{\ref{Fi-holder}} \coloneqq \alpha,\, \kpa{\ref{h error}} \coloneqq \kappa,\, x,\, \gamma,\, L,\, F_1,\, F_2$ to be as in Theorem \ref{control counting}, and denote $\sigma_\gamma (F_1)$ (resp. $\sigma_x(F_2)$) by $\sigma(F_1)$ (resp. $\sigma(F_2)$). We also fix the function $h$ to be as in this theorem, and denote $\calB^h_\gamma(T)$ by $\calH(T)$.
Using the action of deck transformations on $T^1 \mtilde$, we can define $F_2$ on $\cup_{g \in \Gamma} S(gx)$, so from now on we assume that $F_2$ is defined on this larger domain.
For $g \in \Gamma$, let $v_1(g)$ and $v_2(g)$ be tangents to $[gx, P_L (gx)]$ at $P_L(gx)$ and $gx$ respectively. 
With these definitions, the assumption (\ref{h_eq}) of Theorem \ref{control counting} can be written as 
\begin{align}
    h(g) = d(gx, L) - F_1 (v_1(g) ) - {F_2}(v_2(g)) + \bfO({e^{-\kpa{\ref{h error} } d(gx, L) }  }) \label{error of h},
\end{align}
and the statement of this theorem can be written as 
\begin{align*}
    \# \calH (T) = (1 + \bfO (e^{- \kappa' T} )  )\,
    \frac{e^{\delta T}}{\delta} \sigma (F_1) \sigma (F_2)  \quad \text{ for some } \kappa' > 0.
\end{align*}

Note that $h(g)$ differs from $d(gx, L)$ by a bounded amount which is determined, upto an exponentially small error, by the manner that the perpendicular from $gx$ to $L$ departs from $gx$ and enters $L$. We say that $h(g)$ is the distance between $gx$ and $L$, adjusted by \emph{adjustment functions} $F_1$ and $F_2$, or simply the \emph{adjusted height} of $gx$.


Let $X$ be a topological space, $\varphi \from X \to \RR$ an arbitrary function, and $G \from X \to X$ a homeomorphism. Define
$ G \cdot \varphi \from X \to \RR$ by $G\cdot \varphi \coloneqq \varphi \circ G^{-1}$.
Note that if $H \from X \to X$ is another homeomorphism, then 
$(G \circ H) \cdot \varphi = G\cdot  (H \cdot \varphi)$. 
Since $\calG_t$ and $\iota$ are homeomorphisms from $T^1 \mtilde$ (resp. $T^1 M$) to itself, $\calG_t \cdot \varphi$ and $\iota \cdot \varphi$ can be defined for a function $\varphi \from T^1 \mtilde \to \RR$ (resp. $\varphi \from T^1 M \to \RR$). Associating to $g \in \Gamma$ its deck
transformation, $g \cdot \varphi$ can be defined for a function $\varphi \from T^1 \mtilde \to \RR$.
Let $\varphi$ and $\psi$ be arbitrary functions from $T^1 \mtilde$ to $\RR$, and assume that $\varphi$ is $\gamma$--invariant. Let $\bar{\varphi}_\gamma$ and $\bar{\psi}$ be defined by equations (\ref{pushdown-line}) and (\ref{pushdown-point}) respectively. Denoting $\gmacycle \cdot g$ by $[g]$, one can check that
\begin{align} \label{expandgen}
    \int_{T^1 M} \bar{\varphi}_\gamma \times \bar{\psi}\, d\mbarBM = \sum_{[g] \in \gmcycl \backslash \Gamma} \int_{T^1 \mtilde} \varphi \times g \cdot \psi\, d\mBM.
\end{align}

Fix a radius $R$ for the rest of this section, and using equation (\ref{defoftest}), define the test functions $\phi_\tau^i$ for $i = 1, 2$ by
\begin{align*}
    \phi^1_\tau \coloneqq \test{L}{F_1}{R, \tau},\,\,\, \text{ and }\,\,\, 
    \phi^2_\tau \coloneqq \test{x}{F_2}{R, \tau}.
\end{align*}
Plugging $\varphi \coloneqq \calG_t \cdot \phi^1_\tau$ and $\psi \coloneqq \iota \cdot \phi^2_\tau$ in (\ref{expandgen}) we obtain
\begin{align} \label{expand-mixing}
    \int_{T^1 M} \calG_t \cdot \barphi^1_\tau \times \iota \cdot \barphi^2_\tau \,d\mbarBM = \sum_{[g] \in \gmcycl \backslash \Gamma} \int_{T^1 \mtilde} j_\tau(g, t)  \,d \mBM 
    = \sum_{[g] \in \gmcycl \backslash \Gamma} J_\tau(g, t),
\end{align}
where $j_\tau (g, t) \from T^1 \mtilde \to \RR$ and $J_\tau(g, t) \in \RR$ are defined by
\begin{align*}
    j_\tau (g, t) \coloneqq \calG_t \cdot \phi^1_\tau \times (\iota \cdot g \cdot \phi^2_\tau), \,\,\,\text{ and }\,\,\,
    J_\tau(g, t) \coloneqq  \int_{T^1 \mtilde}\, j_\tau(g, t) \,d\mBM.
\end{align*}
For $0 < T_1 < T_2$, multiplying left and right hand sides of (\ref{expand-mixing}) by $e^{\delta t}$ and integrating over $T_1 \leq t \leq T_2$, we obtain
\begin{align} \label{I in sum}
    I_\tau (T_1, T_2)  \coloneqq \int_{T_1}^{T_2} e^{\delta t} \int_{T^1 M} \calG_t \cdot \barphi^1_\tau \times \iota \cdot \barphi^2_\tau \,d\mbarBM \, dt = 
    \sum_{[g] \in \gmcycl \backslash \Gamma} 
    \int_{T_1}^{T_2} e^{\delta t} J_\tau(g, t) \,dt 
\end{align}
To prove Theorem \ref{control counting}, in the first step we show that the right hand side of the above, for $T_1 \coloneqq \frac{T}{2}$ and $T_2 \coloneqq T$, approximates the number of $\gamma$--orbits of adjusted height at most $T$. In the next step, we compute the left hand side of (\ref{I in sum}), using the mixing property of geodesic flow. For the first step we need Lemma \ref{j almost one}, and for the second step we need Lemma \ref{phi-mixing}. Assuming these lemmas, the details of the proof is given in Section \ref{detproof}. This method is similar to the one used in \cite{PPCommonPerp, roblin-thesis}.

\subsection{$J_\tau^\infty(g)$ is exponentially close to $1$}
Recall the notation introduced in Section \ref{test-intro}, and recall that we fixed a radius $R>0$ throughout the paper. The following is an adaptation of Lemma 7 of \cite{PPCommonPerp} to our setting. (See Item (\ref{t and t two}) at the beginning of Section \ref{test-intro}.)

\nextkpa{v-vprime}
\nextkpa{compare distance}

\begin{lemma} \label{pp-lemma7}
    There are constants $\kpa{\ref{v-vprime}}$ and $\kpa{\ref{compare distance}}$ such that the following holds.
    Let $\tau > 0$, and let $w_1 \in \thickbs{\tau}{\calG^{F_1} (\partial^1 L), R}$ be such that $w_2 \coloneqq \iota(\calG_t(w_1)) \in \thickbs{\tau}{\calG^{F_2} (S(gx)), R}$. Setting $v'_1 \coloneqq p_L(w_1)$ and $v'_2 \coloneqq p_{gx} (w_2)$, we have
    \begin{align} 
        d( v_i(g), v'_i) &= \bfO (e^{- \kpa{\ref{v-vprime}} d(gx, L)}),\,\,\, \text{ for } i = 1, 2; \label{vvprime-close}\\
        d(gx, L) &= t + {F_1}(v_1(g)) + {F_2}(v_2(g)) + \bfO({\tau + e^{- \kpa{\ref{compare distance}} d(gx, L)}}).  \label{d-in-t}
    \end{align}
\end{lemma}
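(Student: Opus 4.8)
The plan is to unwind the hypotheses geometrically and track how the various perpendicular feet move under the flow. Fix $w_1 \in \thickbs{\tau}{\calG^{F_1}(\partial^1 L), R}$, so by definition there is some $u_1 \in \partial^1 L$ and some $|s_1| \le \tau$ with $w_1 = \calG_{s_1}(\calG^{F_1}(u_1)) = \calG_{s_1 + F_1(u_1)}(u_1)$; note $w_1^+ = u_1^+$ and $\pi(u_1) = P_L(w_1^+)$, the latter because $u_1 \in \partial^1 L$ sees $w_1^+ = u_1^+$. Similarly, from $w_2 = \iota(\calG_t(w_1)) \in \thickbs{\tau}{\calG^{F_2}(S(gx)), R}$ there is $u_2 \in S(gx)$ and $|s_2| \le \tau$ with $w_2 = \calG_{s_2 + F_2(u_2)}(u_2)$; here $\pi(u_2) = gx$ and $w_2^+ = u_2^+$. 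The key observation is that $w_1^+$ and $w_2^+ = \iota(\calG_t(w_1))^+ = w_1^-$ are the two endpoints of the geodesic line $(w_1^-, w_1^+)$ through $\pi(w_1)$; and since that line is within Sassaki-distance $O(\tau + R)$ (indeed within bounded distance depending only on $R$, after absorbing $\tau$) of the line through $\pi(u_1) \in L$ in the direction $u_1$, and also passes close to $\pi(u_2) = gx$, we conclude that this geodesic line is a uniformly-bounded-distance perturbation of the common perpendicular configuration: it nearly meets $L$ at $P_L(gx)$ and nearly passes through $gx$.

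First I would make the preceding sentence precise. From $w_1$ lying in the strongly stable ball of radius $R$ around $\calG^{F_1}(u_1)$, the basepoints $\pi(w_1)$ and $\pi(\calG^{F_1}(u_1))$ are within $R$, and by Lemma~\ref{d-zero-infty} (or directly the $d_{0,+\infty}$ estimate) the endpoints $w_1^+$ and $u_1^+$ satisfy $d_{\pi(u_1)}(w_1^+, u_1^+) \prec_R 1$. Likewise $w_2^+$ is within bounded visual distance of $u_2^+$, seen from $gx$. Now $v'_1 = p_L(w_1) = P^1_L(w_1^+)$ and $v_1(g) = P^1_L(gx)$, so to compare them I compare the feet on $L$ of $w_1^+$ and of $gx$. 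Since the line $(w_1^-, w_1^+)$ passes (essentially by construction of the thickening around $S(gx)$) within bounded distance of $gx$, the point $gx$ and the endpoint $w_1^+$ project to points of $L$ that are within bounded distance, and then project further down: using Lemma~\ref{same endpoint converge}(\ref{one in boundary coarse}) and Proposition~\ref{proj-close}, together with $d(gx, L)$ large, these feet converge exponentially fast in $d(gx, L)$, which gives \eqref{vvprime-close} for $i = 1$ with some explicit $\kpa{\ref{v-vprime}}$ (essentially $1$, or $1/\clow$, depending on which converging-geodesics estimate dominates). The $i = 2$ case is symmetric, comparing feet of $w_2^+$ and (the appropriate translate of) a point on $L$ as seen from $gx$, again using the converging-geodesics and projection lemmas.

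For \eqref{d-in-t}, the idea is an additive-length bookkeeping along the geodesic line $(w_1^-, w_1^+)$. Parametrize so that $\pi(w_1)$ sits at the flow time determined by $w_1 = \calG_{s_1 + F_1(u_1)}(u_1)$ relative to $u_1 \in \partial^1 L$, i.e.\ at signed distance $s_1 + F_1(u_1)$ past $\pi(u_1) = P_L(w_1^+) \approx P_L(gx)$; and $\pi(w_2) = \pi(\calG_t(w_1))$ sits at signed distance $s_2 + F_2(u_2)$ past $\pi(u_2) = gx$ along the same line (with orientation reversed by $\iota$). Equating the two descriptions of the arclength from near $P_L(gx)$ to near $gx$ along this line gives $d(\pi(u_1), \pi(u_2)) = t + F_1(u_1) + F_2(u_2) + O(\tau)$ up to the exponentially small errors coming from the fact that the line only passes \emph{near} $P_L(gx)$ and \emph{near} $gx$ rather than through them — and those errors are controlled by Lemma~\ref{distance approximates Busemann} (comparing $d(\pi(u_2), P_L(gx))$ with the relevant Busemann function / with $d(gx, L)$) and by \eqref{vvprime-close} together with the \hold{\alpha} regularity of $F_1, F_2$, which lets me replace $F_i(u_i) = F_i(v'_i)$ by $F_i(v_i(g))$ at the cost of $O(e^{-\alpha \kpa{\ref{v-vprime}} d(gx, L)})$. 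Finally $d(\pi(u_1), \pi(u_2)) = d(P_L(gx), gx) + O(\text{bounded feet displacement, hence }e^{-\kappa'' d(gx,L)}) = d(gx, L) + (\text{exp.\ small})$ by Proposition~\ref{proj-close} and the triangle inequality, which yields \eqref{d-in-t} with $\kpa{\ref{compare distance}}$ the minimum of the exponents produced above.

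The main obstacle I anticipate is purely organizational rather than conceptual: keeping straight the three nearby geodesic lines (the axis-direction line through $\pi(u_1)$, the line $(w_1^-, w_1^+)$, and the $gx$-direction line through $\pi(u_2)$), their orientations after applying $\iota$, and which basepoint each visual/Busemann estimate is taken from, so that every "$\prec_R 1$" gap is legitimately converted into an exponentially small error via the converging-geodesics lemmas once $d(gx,L)$ is large. In particular I must be careful that the constants hidden in the thickening radius $R$ and the flow window $\tau$ enter only additively (the $O(\tau)$ in \eqref{d-in-t}) and do not secretly contaminate the exponential rate; this is exactly the place where following Lemma 7 of \cite{PPCommonPerp} but with our $d_\mtilde$-based (rather than Hamenst\"adt-based) stable balls requires a small amount of extra care, cf.\ Remark~\ref{two-ham}.
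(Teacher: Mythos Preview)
Your overall strategy is sound, but one preliminary is misstated and one step is not an argument as written. The misstatement: $w_1 = \calG_{s_1 + F_1(u_1)}(u_1)$ is wrong, since the thickening is a flow-interval of a strongly stable \emph{ball}; rather $w_1 = \calG_{s_1}(w'_1)$ with $w'_1 \in \bss{\calG^{F_1}(u_1), R}$, so $w_1$ lies on a nearby geodesic, not on the orbit of $u_1$. You clearly know this elsewhere, so rewrite the length bookkeeping for \eqref{d-in-t} via Busemann cocycles (one has $\beta(w_1^+, \pi(u_1), \pi(w_1)) = F_1(u_1) + s_1$ exactly, and similarly on the $gx$ side) rather than ``signed distance along the line''. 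The genuine gap is the sentence ``the feet are within bounded distance, and then project further down \ldots\ these feet converge exponentially fast'': once on $L$ there is nothing further to project, and bounded closeness of the feet does not by itself upgrade to exponential closeness. What you actually need is an intermediate point $q$ on the perpendicular ray $[\pi(u_1), w_1^+)$ at height $\asymp d(gx,L)$ which you show lies within $O(1)$ of $gx$ (this follows from Part~(\ref{one in boundary coarse}) of Lemma~\ref{same endpoint converge} applied to the rays $[\pi(u_1), w_1^+)$ and $[\pi(w_1), w_1^+)$, which share the endpoint $w_1^+$); since $P_L(q) = \pi(u_1) = \pi(v'_1)$, Proposition~\ref{proj-close} applied to $q$ and $gx$ then gives \eqref{vvprime-close}.

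The paper takes a shorter and structurally different route. It sets $t' \coloneqq t + F_1(v'_1) + F_2(v'_2)$, shifts to the midpoint $w' \coloneqq \calG_{t'/2 - F_1(v'_1)}(w_1)$, and observes that $\calG_{\pm t'/2}(w')$ land in the \emph{unadjusted} thickenings $\thickbs{\tau}{\partial^1 L, r}$ and $\thickbs{\tau}{S(gx), r}$ for some $r = O(R)$; this reduces both claims to Lemma~7 of \cite{PPCommonPerp} as a black box. Item~3 of that lemma supplies points $y \in [gx, P_L(gx)]$ and $y' = \pi(\calG_{t'/2} v'_1)$, each within $O(\tau + e^{-t/2})$ of $\pi(w')$ and both at height $\approx \tfrac12 d(gx,L)$; Proposition~\ref{proj-close} applied to $y, y'$ yields \eqref{vvprime-close} with $\kpa{\ref{v-vprime}} = \tfrac12$, and item~1 of the cited lemma gives \eqref{d-in-t} directly after replacing $F_i(v'_i)$ by $F_i(v_i(g))$ via the \holder{} regularity of $F_i$. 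Your patched direct argument would also work (and may even give a sharper exponent); the paper buys brevity by reusing \cite{PPCommonPerp} rather than reproving the common-perpendicular geometry from scratch.
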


\begin{proof}
    Let $t' \coloneqq t + F_1 (v'_1) + F_2 (v'_2)$ and $w' \coloneqq \calG_{\frac{t'}{2} - F(v'_1)} w_1$.
    Then for some $r = \bfO(R)$ we have
    \begin{align*}
        \iota(\calG_{\frac{t'}{2}} w') \in \thickbs{\tau}{S(x), r},\,\,\,\, \text{ and }\,\,\,\,  \calG_{- \frac{t'}{2}} w' \in \thickbs{\tau}{\partial^1 L, r}.
    \end{align*}
    Let $h \coloneqq P_L(gx)$ be the foot of perpendicular from $gx$ to $L$. By the third item in \cite{PPCommonPerp} Lemma 7, there exists $y \in [gx, h]$ with $d(\pi(w'), y) = \bfO(e^{-\frac{t}{2}})$. By the same item (of the same lemma) 
    we have $d(\pi(w'), y') = \bfO(\tau + e^{-\frac{t}{2}})$ for $y' \coloneqq \calG_{\frac{t'}{2}} v'_1$. Triangle inequality then implies $d(y, y') = \bfO(\tau + e^{-\frac{t}{2}}) = \bfO(1)$. 
    Since $d(y', \pi(v'_1)) = \frac{1}{2} d(gx, L) + \bfO(1)$, Proposition \ref{proj-close} implies 
    $d( v_1(g), v'_1) = \bfO (e^{- \kpa{\ref{v-vprime}} d(gx,L)})$ for $\kpa{\ref{v-vprime}} \coloneqq \frac{1}{2}$. This proves (\ref{vvprime-close}) for $i = 1$. The proof for $i = 2$ is analogous.

    The first item of \cite{PPCommonPerp} Lemma 7 implies that 
    \begin{align*}
        d(gx, L) = t' + \bfO(\tau + e^{-\frac{t}{2}}) = 
        t + F_1 (v'_1) + F_2 (v'_2) + \bfO(\tau + e^{-\frac{t}{2}})
    \end{align*} 
    Since $d( v_i(g), v'_i) = \bfO (e^{-\kpa{\ref{v-vprime}} d(gx, L)})$ and $F_i$ is \alfa{\ref{Fi-holder}}--\holder{} we have 
    $F_i(v'_i) = F_i(v_i(g)) + \bfO(e^{-\kappa d(gx, L)})$ for 
    $\kappa \coloneqq \kpa{\ref{v-vprime}} \alfa{\ref{Fi-holder}}$. Since $t = d(gx, L) + \bfO(1)$, we have $\bfO(\tau + e^{-\frac{t}{2}}) = \bfO(\tau + e^{-\frac{d(gx, L)}{2}})$. Equation (\ref{d-in-t}) now follows for $\kpa{\ref{compare distance}} \coloneqq \min \{\frac{1}{2}, \kappa\}$.
\end{proof}

\nextkpa{h-equals-t}
\begin{corollary} \label{h-almost-t}
    There exists a constant $\kpa{\ref{h-equals-t}}$ such that the following holds.
    Let $\tau > 0$, $g \in \Gamma$, and $t > 0$ be such that $j_\tau(g, t) \neq 0$. Then we have
    \begin{align*}
        h(g) = t + \bfO(\tau + e^{- \kpa{\ref{h-equals-t}} d(gx, L)}).
    \end{align*}
\end{corollary}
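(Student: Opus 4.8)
The plan is to unwind the hypothesis $j_\tau(g,t)\neq 0$ into the geometric configuration required by Lemma \ref{pp-lemma7}, apply that lemma, and then combine its conclusion with the defining relation (\ref{error of h}) of $h$ by a one-line cancellation.

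First I would recall that $j_\tau(g,t) = \calG_t \cdot \phi^1_\tau \times (\iota \cdot g \cdot \phi^2_\tau)$, so $j_\tau(g,t)\neq 0$ means there is a vector $u \in T^1\mtilde$ at which both factors are nonzero. Set $w_1 \coloneqq \calG_{-t}(u)$. Since $\phi^1_\tau = \test{L}{F_1}{R,\tau}$ is supported on $\thickbs{\tau}{\calG^{F_1}(\partial^1 L), R}$, the condition $(\calG_t\cdot\phi^1_\tau)(u) = \phi^1_\tau(w_1)\neq 0$ forces $w_1 \in \thickbs{\tau}{\calG^{F_1}(\partial^1 L), R}$. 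For the second factor, I would use that $\iota$ is an involution, so $(\iota\cdot g\cdot \phi^2_\tau)(u) = (g\cdot\phi^2_\tau)(\iota(u))$, and that the deck transformation $g$ (an isometry of $\mtilde$) carries the support of $\phi^2_\tau = \test{x}{F_2}{R,\tau}$ onto $\thickbs{\tau}{\calG^{F_2}(S(gx)), R}$, using that $F_2$ has been extended $\Gamma$-equivariantly to $\cup_{g}S(gx)$. Hence $(\iota\cdot g\cdot\phi^2_\tau)(u)\neq 0$ forces $w_2 \coloneqq \iota(u) = \iota(\calG_t(w_1)) \in \thickbs{\tau}{\calG^{F_2}(S(gx)), R}$. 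This is precisely the hypothesis of Lemma \ref{pp-lemma7}.

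Next I would invoke Lemma \ref{pp-lemma7}, whose estimate (\ref{d-in-t}) gives
\begin{align*}
    d(gx, L) = t + F_1(v_1(g)) + F_2(v_2(g)) + \bfO\bigl(\tau + e^{-\kpa{\ref{compare distance}} d(gx,L)}\bigr),
\end{align*}
while the defining relation (\ref{error of h}) of $h$ reads
\begin{align*}
    h(g) = d(gx,L) - F_1(v_1(g)) - F_2(v_2(g)) + \bfO\bigl(e^{-\kpa{\ref{h error}} d(gx,L)}\bigr).
\end{align*}
Substituting the first display into the second, the $F_1(v_1(g))$ and $F_2(v_2(g))$ terms cancel and one obtains $h(g) = t + \bfO\bigl(\tau + e^{-\kpa{\ref{compare distance}} d(gx,L)} + e^{-\kpa{\ref{h error}} d(gx,L)}\bigr)$, so the corollary follows with $\kpa{\ref{h-equals-t}} \coloneqq \min\{\kpa{\ref{compare distance}}, \kpa{\ref{h error}}\}$.

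There is essentially no analytic obstacle here; the only points requiring care are bookkeeping ones: the conventions $G\cdot\varphi = \varphi\circ G^{-1}$ and $\iota\circ\iota = \id$, the equivariance of the extended $F_2$ and of the test functions under deck transformations (so that the supports transform as claimed), and checking that the vector $w_1$ extracted from the support conditions is literally the one fed into Lemma \ref{pp-lemma7} with $w_2 = \iota(\calG_t(w_1))$. Everything else is the two-line cancellation above.
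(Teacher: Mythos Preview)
Your proposal is correct and follows essentially the same route as the paper: pick $u$ with $j_\tau(g,t)(u)\neq 0$, set $w_1 = \calG_{-t}(u)$ and $w_2 = \iota(u)$, invoke Lemma \ref{pp-lemma7} to get (\ref{d-in-t}), and combine with (\ref{error of h}) to obtain the result with $\kpa{\ref{h-equals-t}} = \min\{\kpa{\ref{h error}}, \kpa{\ref{compare distance}}\}$. Your write-up is in fact more explicit than the paper's in justifying why $w_1$ and $w_2$ lie in the required thickenings.
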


\begin{proof}
    Let $u$ be such that $j_\tau(g, t) (u) \neq 0$, and 
    set $w_1 \coloneqq \calG_{-t} u$ and $w_2 \coloneqq \iota(u)$.
    Then $w_1$ and $w_2$ satisfy the assumptions of Lemma \ref{pp-lemma7}, hence Equation (\ref{d-in-t}) implies
    \begin{align*}
        d(gx, L) - {F_1}(v_1(g) ) - {F_2}(v_2(g) ) = t   + \bfO(\tau + e^{- \kpa{\ref{compare distance}} d(gx, L)}).
    \end{align*}
    This, combined with (\ref{error of h}), implies the lemma for  $\kpa{\ref{h-equals-t}} \coloneqq \min\{ \kpa{\ref{h error}}, \kpa{\ref{compare distance}} \}$.
\end{proof}

Define
\begin{align*}
    J^\infty_\tau(g) \coloneqq \int_0^{\infty} e^{\delta t} J_\tau(g, t) \,dt.
\end{align*}
The following lemma is proved in Section 3.2. of \cite{PPCommonPerp} (see Equation (23) of that paper), when $F_1$ and $F_2$ are both zero functions and with a slightly different definition for $\bss{u, r}$. (See Remark \ref{two-ham}.) The proof is completely analogous in our case, hence we only provide a sketch of the proof.

\nextkpa{J-almost-one-kpa}
\begin{lemma} \label{j almost one}
    Assume $M$ is two-dimensional, or $M$ is of dimension at least three and $\clow \leq \clowval$.
    Then there exists a constant $\kpa{\ref{J-almost-one-kpa}}$ such that for all $g \in \Gamma$ and $\tau \geq e^{- \kpa{\ref{J-almost-one-kpa}} d(gx, L) }$ we have 
    \begin{align*}
        J_\tau^\infty (g) = 1 + \bfO(\tau).
    \end{align*}
\end{lemma}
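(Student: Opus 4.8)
The plan is to unwind the definition of $J^\infty_\tau(g)$ and show the integral concentrates, up to a factor $1 + \bfO(\tau)$, on the single quantity one is trying to compute, exactly as in Section 3.2 of \cite{PPCommonPerp}. Write $\calG^{F_1}(\partial^1 L)$ for the flowed normal bundle and recall that $\test{L}{F_1}{R,\tau}$ is, essentially, the normalized characteristic function of the thickening $\thickbs{\tau}{\calG^{F_1}(\partial^1 L), R}$. First I would substitute $\varphi = \calG_t \cdot \phi^1_\tau$ into $J_\tau(g,t) = \int_{T^1\mtilde} (\calG_t \cdot \phi^1_\tau) \times (\iota \cdot g \cdot \phi^2_\tau) \, d\mBM$, and integrate over $t \in [0,\infty)$ with the weight $e^{\delta t}$. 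The effect of integrating in $t$ is to replace the two time-thickenings of width $2\tau$ by a single integral along the flow direction; by the Fubini-type computation in \cite{PPCommonPerp} (their Equation (23)), the $dt$-integral against $e^{\delta t}$ produces a factor that, after using the disintegration of $\mBM$ over the skinning measure $\sigma_L$ along the fibers $p_L^{-1}(v) = \ws(v)$ (Proposition 8 of \cite{PPSkin}, already invoked in the proof of Proposition \ref{integral-of-test}), collapses to an integral of the Patterson--Sullivan-type conditional measure over a ball, divided by the same ball's mass $\mub{\calG^{F_1}(v), R}$. The normalization in the definition \eqref{defoftest} of the test function is chosen precisely so that this ratio equals $1$.

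Concretely, the key steps in order are: (1) Expand $J^\infty_\tau(g) = \int_0^\infty e^{\delta t} \int_{T^1\mtilde} j_\tau(g,t) \, d\mBM \, dt$ and switch the order of integration. (2) Use the Hopf parametrization together with the disintegration of $\mBM$ over $\sigma_L$ to reduce the $T^1\mtilde$-integral to an integral over $\ws(v)$, $v \in \partial^1 L$, of $e^{\delta \beta(v^+, \pi(v), \pi(w))}\, d\mus_v(w)$ against the product of the two (flowed, thickened) indicator functions. (3) Parametrize $\ws(v) \cong \wss{\calG^{F_1}(v)} \times \RR$ by $(w', s) \mapsto \calG_s(w')$, so that $d\mus_v(w) = e^{-\delta s}\, d\muss_{\calG^{F_1}(v)}(w')\, ds$, and carry out the $s$- and $t$-integrals; the combination $\int\!\!\int e^{\delta t} e^{-\delta s} \mathbbm{1}(\cdots)\, ds\, dt$ over the relevant ranges gives $\frac{1}{\delta}$ times a factor that matches $2\tau\, \mub{\calG^{F_1}(v), R}$ against the normalizations of $\phi^1_\tau$ and $\phi^2_\tau$, up to the error coming from the "mismatch" between the two strongly stable balls. (4) Finally, use Lemma \ref{pp-lemma7}, specifically \eqref{vvprime-close}, to control that mismatch: whenever $j_\tau(g,t)(u) \neq 0$ the basepoints $v'_1 = p_L(w_1)$ and $v'_2 = p_{gx}(w_2)$ are within $\bfO(e^{-\kpa{\ref{v-vprime}} d(gx,L)})$ of $v_1(g), v_2(g)$, so the two strongly stable balls one is comparing differ only by a set of relative measure $\bfO(\tau)$ once $\tau \geq e^{-\kpa{\ref{J-almost-one-kpa}} d(gx,L)}$ (choosing $\kpa{\ref{J-almost-one-kpa}}$ small enough in terms of $\kpa{\ref{v-vprime}}$, $\delta$, and the local Hölder/doubling constants of $\muss$).

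The main obstacle — and the only genuinely new point over \cite{PPCommonPerp} — is step (4): one must verify that replacing the $d^{\ham}$-balls of \cite{PPCommonPerp} by the $d_{\mtilde}$-balls (or the $d_u^{\ham,R}$-balls of Remark \ref{two-ham}) in the definition of $\bss{u,r}$ does not disturb the cancellation. Since these two families of balls agree up to a multiplicative constant depending only on $\clow$ and $R$, and since along the flow the balls scale with the expected exponential rate, the ratio $\mub{\calG^{F_1}(v'_1), R} / \mub{\calG^{F_1}(v_1(g)), R}$ is still $1 + \bfO(\tau)$ on the range of $\tau$ allowed; this uses that $v'_1$ and $v_1(g)$ are exponentially close and that $\muss$ varies Hölder-continuously in the basepoint (a consequence of the Hölder regularity of the Patterson--Sullivan density via \eqref{skin def}). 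The constraint $\tau \geq e^{-\kpa{\ref{J-almost-one-kpa}} d(gx,L)}$ is exactly what guarantees the $\bfO(e^{-\kpa{\ref{v-vprime}} d(gx,L)})$ error is absorbed into the $\bfO(\tau)$ term. With this in hand the remaining computation is the routine Fubini argument of \cite{PPCommonPerp}, Section 3.2, which we only sketch, and the lemma follows.
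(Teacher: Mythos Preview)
Your outline follows the skeleton of \cite{PPCommonPerp}, Section 3.2, as does the paper. But you have misidentified the ``genuinely new point'', and this is a real gap, not a cosmetic one.

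The key input you never invoke is the \emph{RHC condition} (Definition \ref{rhc_def}, Theorem \ref{radius-hold-cont}). The curvature hypothesis in the lemma --- $M$ two-dimensional, or $\clow \leq \clowval$ --- is present precisely to guarantee RHC; it has nothing to do with the choice of Hamenst\"adt versus $d_{\mtilde}$ balls (that switch is minor, cf.\ Remark \ref{two-ham}). In your step (4) you claim the ratio $\mub{\calG^{F_1}(v'_1),R}/\mub{\calG^{F_1}(v_1(g)),R} = 1 + \bfO(\tau)$ follows from ``H\"older regularity of the Patterson--Sullivan density via \eqref{skin def}'' and ``local H\"older/doubling constants of $\muss$''. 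Neither suffices: doubling controls $\mub{v,2r}/\mub{v,r}$, not $\mub{v,r+\epsilon}-\mub{v,r}$; and \eqref{skin def} concerns $\sigma_D$, not the dependence of $\mub{v,R}$ on $v$. What is actually used is Lemma \ref{mub-holder}, whose proof invokes Theorem \ref{radius-hold-cont} \emph{twice}: once for H\"older continuity of $r\mapsto\mub{v,r}$ directly, and once to pass from $\mub{u,r-\epsilon}$ to $\mub{u,r}$ after applying the holonomy $H^{uv}$. RHC is also needed a second time in the proof, to compare $\muss_{v_i^F}(\mathbf{B}_g)$ with $\mub{v_i^F,R}$ via the sandwich \eqref{bg-include}. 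The paper flags this explicitly at the start of Section \ref{test-intro}: item (a), verifying RHC, ``is used in the proof of Lemma \ref{j almost one}''.

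A smaller error: in your step (3) you say the $(s,t)$-integral produces a factor of $\frac{1}{\delta}$. It does not. The support in $(s,t)$ is a translate of the rhombus $P_\tau$ of area $4\tau^2$ (see \eqref{rhombus-trans}), and the factor $e^{\delta t}$ is absorbed by pulling out $e^{\delta h(g)}$ at the start (since $t = h(g) + \bfO(\tau)$ on the support, by Corollary \ref{h-almost-t}); the $4\tau^2$ then cancels the normalizations in the two test functions. The $\frac{1}{\delta}$ you have in mind appears only later, in Lemma \ref{I closed formula}, from $\int_{T/2}^{T} e^{\delta t}\,dt$.
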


\begin{proof} 
    We fix $g$ throughout the proof. We also fix $\tau \geq e^{-\hat{\kappa} d(gx, L)}$ for some small enough $\hat{\kappa}$ to be optimized in the course of the proof. We say that two vectors $u, v \in T^1 \mtilde$ are exponentially close if $d(u, v) = \bfO(e^{- \kappa d(gx, L)})$ for some constant $\kappa$. We make a similar definition for real numbers.

    If $J_\tau(g, t) \neq 0$ for some $t$, then by Corollary \ref{h-almost-t} (and imposing $\kpahat \leq \kpa{\ref{h-equals-t}})$) we have $t = h(g) + \bfO(\tau)$, therefore
    \begin{align*}
        J^\infty_\tau(g) = (1 + \bfO(\tau))\, e^{\delta h(g)} \int_0^{\infty} J_\tau(g, t) \,dt = \int_{T^1 \mtilde \times \RR} j(g, t) (u)\, d\mBM\, dt.
    \end{align*}
    For $D = L$ or $D$ a point in $\mtilde$ define $p_D^F(u) \coloneqq \calG^F(p_D(u))$.
    Define $v^F_i \coloneqq \calG^{F_i}(v_i(g))$ for $i = 1, 2$.
    If $j_\tau(g, t) (u) \neq 0$ for some $u$, then by Lemma \ref{pp-lemma7} $v_1(g)$ and $p_L u$ are exponentially close, hence by Lemma \ref{gt-holder}, $v^F_1 \coloneqq \calG^F(v_1(g))$ and $p_L^F(u)$ are exponentially close. by Theorem \ref{radius-hold-cont}, $M$ has the RHC property, thus Lemma \ref{mub-holder} implies $\mub{v_1^F, R}$ and $\mub{p_L^F(u), R}$ are exponentially close. By a similar argument, $\mub{v_2^F, R}$ and $\mub{p_{gx}^F(\iota(u)), R}$ are also exponentially close. As a result, there exists $\kappa$ such that for all $u \in \sprt(j_\tau(g, t))$ we have
    \begin{align*}
        j_\tau(g, t)(u) = \frac{1}{4 \tau^2 \mub{p_L^F(u), R} \mub{p_{gx}^F(\iota(u)), R}} = (1 + \bfO(e^{- \kappa d(gx, L)}))\, \frac{1}{4\tau^2 \mub{v_1^F, R} \mub{v_2^F, R}}.
    \end{align*}
    Let $S_g$ denote the support of $j_\tau(g, \param)(\param)$, that is,
    \begin{align*}
        S_g \coloneqq \{(u, t): j_\tau(g, t)(u) \neq 0 \}.
    \end{align*}
    We have
    \begin{align} \label{J-in-char}
        J^\infty_\tau(g) = (1 + \bfO(e^{- \kappa d(gx, L)} + \tau))\, e^{\delta h(g)}\, \frac{1}{4\tau^2 \mub{v_1^F, R} \mub{v_2^F, R}} \int_{T^1 \mtilde \times \RR} \mathbbm{1}_{S_g}(u, t) \, d\mBM(u).\, dt.
    \end{align}
    Imposing $\kpahat \leq \kappa$, we can replace the term $\bfO(e^{- \kappa d(gx, L)} + \tau)$ in the above equation by $\bfO(\tau)$.

    For $u \in T^1 \mtilde$ and for $i = 1, 2$ define $u'_i \in \wss{v_i^F}$ such that $(u'_1)^- = u^-$ and $(u'_2)^- = u^+$, and let $s \coloneqq d(P_{(u^-, u^+)} o, \pi(u))$ be the time parameter in the Hopf parametrization $u$ (see Section \ref{test-intro}). The map $u \mapsto (u'_1, u'_2, s)$ is a homeomorphism from $T^1 \mtilde \setminus \{u: u^- = (v_1^F)^+ \,\text{ or }\, u^+ = (v_2^F)^+ \}$ to $\wss{v_1^F} \times
    \wss{v_2^F} \times \RR$. Note that if $d(gx, L)$ is large enough and $j_\tau(g, t)(u) \neq 0$, then $u \in T^1 \mtilde \setminus \{u: u^- = (v_1^F)^+, \,\text{ or }\, u^+ = (v_2^F)^+ \}$. We identify $u$ with $(u'_1, u'_2, s)$ for the rest of this proof.
    
    Define $\mathbf{B}_g \subset \wss{v_1^F} \times \wss{v_2^F}$ by
    \begin{align*}
        \mathbf{B}_g \coloneqq \{(u'_1, u'_2): j_\tau(g, t)(u'_1, u'_2, s) \neq 0 \text{ for some } t, s \}.
    \end{align*}
    By the properties of $H^{\param \param}$ introduced in Lemma \ref{mub-holder}, there exists $\kappa'$ such that for $\epsilon \coloneqq e^{-\kappa' d(gx, L)}$ we have
    \begin{align} \label{bg-include}
        \bss{v^F_1, r - \epsilon} \times \bss{v^F_2, r - \epsilon} \subset
        \mathbf{B}_g \subset \bss{v^F_1, r + \epsilon} \times \bss{v^F_2, r + \epsilon}.
    \end{align}
    For $(u'_1, u'_2) \in \mathbf{B}_g$, set
    \begin{align*}
        P_{(u'_1, u'_2)} \coloneqq \{(t, s): j_\tau(g, t)(u'_1, u'_2, s) \neq 0  \}.
    \end{align*}
    Let $P_\tau$ denote the rhombus $\{(t, s): |s| \leq \tau \text{ and } |t - s| \leq \tau \}$. It can be proved that $P_{(u'_1, u'_2)}$ is always a translation of $P_\tau$. More precisely, there are functions $\bar{t}$ and $\bar{s}$ from $\mathbf{B}_g$ to $\RR$ such that
    \begin{align} \label{rhombus-trans}
        P_{(u'_1, u'_2)} = \big(\bar{t}(u'_1, u'_2) , \bar{s}(u'_1, u'_2)\big) + P_\tau \,\text{ for }\, (u'_1, u'_2) \in \mathbf{B}_g.
    \end{align}
    Arguing as in Lemma 10 of \cite{PPCommonPerp}, for $\hat{\kappa}$ small enough we have
    \begin{align*}
        d\mBM(u) = (1 + \bfO(\tau))\, e^{-\delta h(g)} d\muss_{v_1^F}(u'_1)\, d\muss_{v_2^F}(u'_2)\, ds ,
    \end{align*}
    therefore
    \begin{align*}
        \int_{T^1 \mtilde \times \RR} \mathbbm{1}_{S_g}(u, t) \, d\mBM(u)\, dt &=
        (1 + \bfO(\tau))\, e^{-\delta h(g)} \int_{\wss{v_1^F} \times
        \wss{v_2^F} \times \RR \times \RR} \mathbbm{1}_{S_g} (u'_1, u'_2, s, t) \times \\
        & \hspace{7 cm} d\muss_{v_1^F}(u'_1)\, d\muss_{v_2^F}(u'_2)\, ds\, dt \\
        &= 4 \tau^2 e^{-\delta h(g)} \int_{\wss{v_1^F} \times \wss{v_2^F}} \mathbbm{1}_{\mathbf{B}_g}\, (u'_1, u'_2)\, d\muss_{v_1^F}(u'_1)\,\, d\muss_{v_2^F}(u'_2) \tag{by (\ref{rhombus-trans})} \\
        &= 4 \tau^2 e^{-\delta h(g)} (1 + \bfO(\tau))\, \mub{v_1^F, R} \mub{v_2^F, R}.
    \end{align*}
    For the last equality, we used (\ref{bg-include}) and Theorem \ref{radius-hold-cont}, and assumed $\kpahat$ is small enough.
    Plugging the above into (\ref{J-in-char}) concludes the proof for $\kpa{\ref{J-almost-one-kpa}} \coloneqq \kpahat$.
\end{proof}

\subsection{The exponential decay of correlations for test functions}

For a real valued \hold{\alpha} function $\varphi$ on a metric space $(X, d)$, define
\begin{align*}
    \norm{\varphi}_\alpha \coloneqq \max 
    \big\{ \norm{\varphi}_\infty,\, \sup_{x \neq y \in X} \frac{|\varphi(x) - \varphi(y)|}{d(x, y)^\alpha}
    \big\}.
\end{align*}
We say that the geodesic flow $\calG_t$ (on $M$) is \emph{exponentially mixing for \holder{} regularity $\alpha$} (for Bowen-Margulis measure), if there are constants $\kappa = \kappa(\alpha)$ and $c = c(\alpha)$ such that for all $\alpha$--\holder{} functions  $\varphi_1, \varphi_2 \from T^1 M \to \RR$ and $t > 0$ we have
\begin{align} \label{exp. mixing}
    \int_{T^1 M} \calG_t \cdot \varphi_1 \times \varphi_2\, dm_\text{BM} = 
    \int_{T^1 M} \varphi_1\, dm_\text{BM} \int_{T^1 M} \varphi_2\, dm_\text{BM}
    + \bfO( c e^{- \kappa t })\, \norm{\varphi_1}_\alpha\, \norm{\varphi_2}_\alpha.
\end{align}
If $\calG_t$ is exponentially mixing for some \holder{} regularity $\alpha_0$, (or for $C^{n_0}$ functions equipped with Sobolov norm of of order $n_0$), then it is exponentially mixing for all \holder{} regularity $\alpha$ (and for $C^n$ functions equipped with Sobolev norm of order $n$, for all positive integers $n$). See Lemma B.1. of \cite{Bernouli-mixing} for a proof. If $M$ is two-dimensional, or if $M$ is of dimension at least three and $\frac{1}{9}$--pinched (that is, $\clow \leq 3$), the geodesic flow is exponentially mixing. (See \cite{dolg-exp-mixing} and \cite{glp-exp-mixing}.)
 
The functions $\barphi^i_\tau$, introduced at the beginning of this section, are not \holder{}-continuous; in fact, they are not even continuous. However, an equation similar to (\ref{exp. mixing}) holds for these functions. (See Lemma \ref{phi-mixing}.) The reason is that $\barphi^i_\tau$ can be well-approximated by \holder{}-continuous functions, with some control over the \holder{} constants.
Such functions are constructed in Appendix \ref{construct-chi}, and a similar construction is sketched in Section 6 of \cite{PPSkin}. We use the following elementary lemma in the proof of Lemma \ref{phi-mixing}.

\begin{lemma} \label{elementary}
    Let $(X, \mu)$ be a measured space and $\varphi,\, \psi,\, \varphi',\, \psi'$ be measurable functions with finite $L^1$ and $L^\infty$ norms. Then we have
    \begin{align*}
        |\langle \varphi, \psi \rangle - \langle \varphi', \psi' \rangle| \leq
        \norm{\varphi - \varphi'}_{L^1} \norm{\psi}_{L^\infty} +
        \norm{\varphi'}_{L^\infty} \norm{\psi' - \psi}_{L^1},
    \end{align*}
    where the inner product $\langle \varphi, \psi \rangle$ is defined to be $\int_X \varphi \psi \, d\mu$.
\end{lemma}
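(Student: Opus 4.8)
The plan is to reduce everything to the single observation that for measurable functions $f \in L^1(X,\mu)$ and $g \in L^\infty(X,\mu)$ one has $fg \in L^1(X,\mu)$ together with the estimate $\left| \int_X fg \, d\mu \right| \le \norm{f}_{L^1}\,\norm{g}_{L^\infty}$. This is just the $L^1$--$L^\infty$ case of H\"older's inequality (bound $|fg|$ pointwise by $|f|\,\norm{g}_{L^\infty}$ almost everywhere and integrate), and it also guarantees that each of the pairings $\langle \varphi, \psi \rangle$, $\langle \varphi', \psi' \rangle$, $\langle \varphi - \varphi', \psi \rangle$, and $\langle \varphi', \psi - \psi' \rangle$ is a well-defined finite real number, which is what makes the algebraic manipulation below legitimate.

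Next I would record the telescoping identity
\begin{align*}
    \langle \varphi, \psi \rangle - \langle \varphi', \psi' \rangle
    = \langle \varphi - \varphi', \psi \rangle + \langle \varphi', \psi - \psi' \rangle,
\end{align*}
obtained by adding and subtracting $\langle \varphi', \psi \rangle$ and using bilinearity of $\langle \param, \param \rangle$. Applying the triangle inequality and then the H\"older bound above to each summand on the right gives
\begin{align*}
    |\langle \varphi - \varphi', \psi \rangle| \leq \norm{\varphi - \varphi'}_{L^1}\,\norm{\psi}_{L^\infty},
    \qquad
    |\langle \varphi', \psi - \psi' \rangle| \leq \norm{\varphi'}_{L^\infty}\,\norm{\psi - \psi'}_{L^1},
\end{align*}
and summing these two inequalities yields precisely the claimed estimate. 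There is no genuine obstacle in this proof; the only point requiring (minor) care is checking integrability of the relevant products so that the use of bilinearity is justified, and this is immediate from the assumed finiteness of the $L^1$ and $L^\infty$ norms of $\varphi,\psi,\varphi',\psi'$.
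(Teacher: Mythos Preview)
Your proof is correct and is exactly the standard argument: telescope by adding and subtracting $\langle \varphi', \psi \rangle$, then apply the $L^1$--$L^\infty$ H\"older bound to each piece. The paper itself does not supply a proof of this lemma (it is stated without proof and simply called ``elementary''), so there is nothing to compare against; your write-up fills in precisely what the paper leaves to the reader.
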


\nextkpa{phi-mixing-kpa}
\begin{lemma} \label{phi-mixing}
    Assume $M$ is two-dimensional, or $M$ is of dimension at least three and $\clow \leq 3$. Then there exists a constant $\kpa{\ref{phi-mixing-kpa}}$ such that for all $t > 0$ and $\tau \geq e^{-\kpa{\ref{phi-mixing-kpa}} t}$ we have
    \begin{align*}
        \int_{T^1 M} \calG_t \cdot \barphi^1_\tau \times \iota \cdot \barphi^2_\tau\, d\mbarBM = 
        (1 + \bfO(
            \tau
        ))\, \sigma_\gamma (F_1) \sigma_x (F_2).
    \end{align*}
\end{lemma}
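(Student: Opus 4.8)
The plan is to compare the integral $\int_{T^1 M} \calG_t \cdot \barphi^1_\tau \times \iota \cdot \barphi^2_\tau\, d\mbarBM$ against the product $\int \barphi^1_\tau\, d\mbarBM \cdot \int \iota\cdot\barphi^2_\tau\, d\mbarBM$, which by Proposition \ref{integral-of-test} (and $\iota$-invariance of $\mbarBM$, noting $\int \iota\cdot\bartest{x}{F_2}{R,\tau} = \int \bartest{x}{F_2}{R,\tau}$) equals $\sigma_\gamma(F_1)\,\sigma_x(F_2)$. The issue is that $\barphi^i_\tau$ are not \holder{}-continuous — they are indicator-function-type objects scaled by $1/(2\tau\mu^{ss}_B)$ — so \eqnref{exp. mixing} cannot be applied directly. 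The standard remedy, and the one I would carry out, is to sandwich each $\barphi^i_\tau$ between \holder{}-continuous functions $\chi^{i,-}_{\tau,\epsilon} \leq \barphi^i_\tau \leq \chi^{i,+}_{\tau,\epsilon}$ built in Appendix \ref{construct-chi}, where $\epsilon$ is a smoothing parameter to be chosen as a suitable negative power of $e^{t}$ (equivalently a small power of $\tau$). The key quantitative inputs from that appendix are: first, the $L^1$ error $\norm{\chi^{i,\pm}_{\tau,\epsilon} - \barphi^i_\tau}_{L^1(\mbarBM)} = \bfO(\epsilon/\tau)$ (the symmetric difference of the thickenings has $\mbarBM$-measure $\bfO(\epsilon)$ by \holder{}-continuity of the skinning/conditional measures, i.e.\ the RHC property via Lemma \ref{mub-holder}, and the normalizing factor is $\sim 1/(2\tau\mu^{ss}_B)$); and second, the \holder{}-norm bound $\norm{\chi^{i,\pm}_{\tau,\epsilon}}_{\alpha} = \bfO(\tau^{-1}\epsilon^{-\alpha'})$ for appropriate $\alpha,\alpha'$ — a polynomial blow-up in $1/\epsilon$.

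With these in hand the argument is: apply \eqnref{exp. mixing} to the pair $(\chi^{1,\pm}_{\tau,\epsilon}, \iota\cdot\chi^{2,\pm}_{\tau,\epsilon})$ to get
\begin{align*}
    \int_{T^1 M} \calG_t \cdot \chi^{1,\pm}_{\tau,\epsilon} \times \iota\cdot\chi^{2,\pm}_{\tau,\epsilon}\, d\mbarBM = \Big(\int \chi^{1,\pm}_{\tau,\epsilon}\Big)\Big(\int \chi^{2,\pm}_{\tau,\epsilon}\Big) + \bfO\big(e^{-\kappa t}\tau^{-2}\epsilon^{-2\alpha'}\big),
\end{align*}
then use Lemma \ref{elementary} (twice, once on each side, exploiting $\norm{\barphi^i_\tau}_{L^\infty} = \bfO(1/\tau)$ and $\norm{\barphi^i_\tau}_{L^1}=\bfO(1)$) to replace each $\chi^{i,\pm}_{\tau,\epsilon}$ by $\barphi^i_\tau$ at the cost of $\bfO(\epsilon/\tau^2)$ on the LHS and $\bfO(\epsilon/\tau)$ on the product side. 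Dividing everything by $\sigma_\gamma(F_1)\sigma_x(F_2) = \bfO(1)$ and collecting: the total relative error is $\bfO\big(\epsilon/\tau^2 + \epsilon/\tau + e^{-\kappa t}\tau^{-2}\epsilon^{-2\alpha'}\big)$. Now choose $\epsilon$ to balance — roughly $\epsilon \asymp (\tau^{2}e^{-\kappa t})^{1/(1+2\alpha')}$ suffices — and impose $\tau \geq e^{-\kpa{\ref{phi-mixing-kpa}}t}$ with $\kpa{\ref{phi-mixing-kpa}}$ small enough (depending on $\kappa$ and $\alpha'$) so that every term is $\bfO(\tau)$. This yields exactly the claimed $(1+\bfO(\tau))\sigma_\gamma(F_1)\sigma_x(F_2)$.

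The main obstacle, and the part requiring care, is the construction and the \holder{}-norm control of the approximating functions $\chi^{i,\pm}_{\tau,\epsilon}$ — this is precisely the "smoothing argument" flagged as contribution (\ref{smoothdone}) and deferred to Appendix \ref{construct-chi}. One has to smooth the indicator of a dynamical thickening $\thickbs{\tau}{\calG^{F_i}(\partial^1 D), R}$ in a way that (a) keeps the $L^1$ error linear in the geometric perturbation $\epsilon$, using that the boundary of the thickening is transverse to the flow and the conditional measures vary \holder{}-continuously, and (b) controls the resulting \holder{} constant by only a fixed negative power of $\epsilon$ (not worse), which forces the smoothing to be done at scale $\epsilon$ in the transverse directions and at scale $\tau$ in the flow direction, carefully respecting the product structure $\wss{\cdot}\times\RR$ of the stable manifolds. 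Everything else — invoking exponential mixing (Dolgopyat/GLP, valid under the stated pinching $\clow\le 3$), Proposition \ref{integral-of-test}, Lemma \ref{elementary}, the RHC-based measure estimates from Section \ref{rhc-section} and Lemma \ref{mub-holder} — is assembly of already-available pieces. A minor point to check along the way is that $\calG^{F_i}$ being merely \holder{} (not Lipschitz) only affects the exponents, not the shape of the argument, since Lemma \ref{gt-holder} and the \holder{}-continuity results of Section \ref{hold_section} guarantee $p_D^{F}$ and the relevant composed maps remain \holder{}.
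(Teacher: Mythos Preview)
Your proposal is correct and follows essentially the same approach as the paper: approximate each $\barphi^i_\tau$ by the \holder{}-continuous functions from Appendix~\ref{construct-chi} (Lemma~\ref{smoothing}), apply exponential mixing \eqref{exp. mixing} to the approximants, and use Lemma~\ref{elementary} and Proposition~\ref{integral-of-test} to transfer back, choosing $\epsilon$ as a power of $\tau$ and $\kpa{\ref{phi-mixing-kpa}}$ small enough that all error terms collapse to $\bfO(\tau)$. The only cosmetic differences are that the paper uses a one-sided approximation $0\le\barchi_i\le\barphi^i_\tau$ (the $L^1$-closeness handles both directions, so a two-sided sandwich is unnecessary), the $L^1$ error in Lemma~\ref{smoothing} carries a \holder{} exponent $\epsilon^{\alfa{\ref{chiplus-alpha}}}/\tau$ rather than $\epsilon/\tau$, and the paper simply sets $\epsilon=\tau^n$ rather than balancing optimally.
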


\begin{proof}
    To simplify the notation, we will drop $d \mbarBM$ when we integrate over $T^1 M$.
    Let $\alfa{\ref{chi-holder}}$, $\alfa{\ref{chiplus-alpha}}$, and $n_1$ be the constants given by Lemma \ref{smoothing}. Then, by this lemma, for every $\epsilon < \tau^{n_1}$ there are functions $\barchi_i(\tau, \epsilon) \coloneqq \barchi_{F_i}(R, \tau, \epsilon)$, for $i = 1, 2$, such that 
    \begin{enumerate} [(i)]
        \item $0 \leq \barchi_i(\tau, \epsilon) \leq \barphi^i_\tau$.
        \item $\barchi_i(\tau, \epsilon)$ is \hold{\alfa{\ref{chi-holder}}} with $\alfa{\ref{chi-holder}}$--norm of order 
        $\frac{1}{\tau \epsilon}$.
        \item \label{lone-in-text} $\norm{\barphi^i_\tau - \barchi_i(\tau, \epsilon)}_{L^1} =
        \bfO(\frac{
            \epsilon^{\alfa{\ref{chiplus-alpha}}}
        }{\tau} )$.
    \end{enumerate}

    Since $\iota$ is an isometry with respect to Sassaki metric, $\iota \cdot \barchi_2(\tau, \epsilon)$ is \hold{(\alfa{\ref{chi-holder}}, \bfO(\frac{1}{\tau \epsilon}))}. 
        Writing (\ref{exp. mixing}) for $\alpha \coloneqq \alfa{\ref{chi-holder}}$, we obtain $\kappa' = \kappa'(\alfa{\ref{chi-holder}})$ and $ c = c (\alfa{\ref{chi-holder}})$ such that for all $t > 0$ we have 
        \begin{align*}
            \int_{T^1 M} \calG_t \cdot \barchi_1(\tau, \epsilon ) \times \iota \cdot \barchi_2(\tau, \epsilon) =
        \int_{T^1 M} \barchi_1(\tau, \epsilon ) 
        &\times
        \int_{T^1 M} \iota \cdot \barchi_2(\tau, \epsilon)\\
        &+ \bfO(c e^{-\kappa' t})\,
        \norm{\barchi_1(\tau, \epsilon )}_{\alfa{\ref{chi-holder}}}\,
        \norm{\iota \cdot \barchi_2(\tau, \epsilon)}_{\alfa{\ref{chi-holder}}}.
    \end{align*}
    Since $\iota \from T^1 M \to T^1 M$ is measure preserving, $\int_{T^1 M} \iota \cdot \barchi_2(\tau, \epsilon) = \int_{T^1 M} \barchi_2(\tau, \epsilon)$, thus
    \begin{align} \label{mix-chi}
        \int_{T^1 M} \calG_t \cdot \barchi_1(\tau, \epsilon ) \times \iota \cdot \barchi_2(\tau, \epsilon)
        = \int_{T^1 M} \barchi_1(\tau, \epsilon) \times \int_{T^1 M} \barchi_2(\tau, \epsilon) + \bfO(
            \frac{ e^{-\kappa' t}}{\tau^2 \epsilon^2}
        ).
    \end{align}
    Item (\ref{lone-in-text}) above implies that
    \begin{align} \label{chi-phi}
        \int_{T^1 M} \barchi_i (\tau, \epsilon ) = 
        \int_{T^1 M} \barphi_i (\tau) + 
        \bfO(\frac{
                \epsilon^{\alfa{\ref{chiplus-alpha}}}
            }{\tau} )  =
        \big(1 + 
            \bfO(\frac{
                \epsilon^{\alfa{\ref{chiplus-alpha}}}
            }{\tau} )
        \big) \, \sigma(F_i).
    \end{align}
    After using Lemma \ref{elementary}, Item (\ref{lone-in-text}) above, coupled with the fact that both $\norm{\barphi^i_\tau}_{L^\infty}$ and $\norm{\barchi_i(\tau, \epsilon)}_{L^\infty}$ are of order $\frac{1}{\tau}$, implies
    \begin{align*}
        \int_{T^1 M} \calG_t \cdot \barchi_1(\tau, \epsilon) \times
        \iota \cdot \barchi_2 (\tau, \epsilon) =  
        \int_{T^1 M} \calG_t \cdot \barphi^1_\tau \times
        \iota \cdot \barphi^2_\tau  +
        \bfO(\frac{
                \epsilon^{\alfa{\ref{chiplus-alpha}}}
            }{\tau^2}). 
    \end{align*}
    Plugging the above equation and (\ref{chi-phi}) into (\ref{mix-chi}), we get 
    \begin{align*}
        \int_{T^1 M} \calG_t \cdot \barphi^1_\tau \times
        \iota \cdot \barphi^2_\tau = \big(1 + 
        \bfO(
            \frac{ e^{-\kappa' t}}{\tau^2 \epsilon^2}
            + \frac{
                \epsilon^{\alfa{\ref{chiplus-alpha}}}
            }{\tau} +
            \frac{
                \epsilon^{\alfa{\ref{chiplus-alpha}}}
            }{\tau^2}
        )
        \big)\, \sigma(F_1) \sigma(F_2).
    \end{align*}
    Let $n \coloneqq 
        \max \{ n_1, \frac{3}{
            {\alfa{\ref{chiplus-alpha}}}
            } \}$.
    Setting $\epsilon \coloneqq \tau^n$ in the above equation gives
    \begin{align*}
        \int_{T^1 M} \calG_t \cdot \barphi^1_\tau \times
        \iota \cdot \barphi^2_\tau = 
        \big(1 + \bfO (
            \frac{ e^{-\kappa' t}}{\tau^{2n + 2}} +
            \tau )\big) \, \sigma(F_1) \sigma(F_2).
    \end{align*}
    Let $\kpa{\ref{phi-mixing-kpa}}$ be a small enough constant such that 
    $\kpa{\ref{phi-mixing-kpa}}(2n + 3) < \kappa' $. Then we have
    \begin{align*}
        \frac{e^{-\kappa' t}}{\tau^{2n + 2}} < \tau,
    \end{align*}
    hence we can replace the term 
    \begin{align*}
        \bfO(
        \frac{ e^{-\kappa' t}}{\tau^{2n + 2}} +
        \tau)
    \end{align*}
   in the last Equation by $\bfO({ }\tau)$. 
    This concludes the proof of the lemma.
\end{proof}

\subsection{Proof of Theorem \ref{control counting}} \label{detproof}

For $0 < T_1 < T_2$, recall the definition of $I_\tau(T_1, T_2)$ given in (\ref{I in sum}).

\nextkpa{I-et-kpa}
\begin{lemma} \label{I closed formula}
    Assume $M$ is two-dimensional, or $M$ is of dimension at least three and $\clow \leq 3$. Then there exists a constant $\kpa{\ref{I-et-kpa}}$ such that the following holds. Let $a > 0$, then there exists a constant $c = c(a)$ such that for all $T \geq 0$, $\tau \geq e^{-\kpa{\ref{I-et-kpa}}T}$, and real number $a'$ with $|a'| \leq a$ we have
    \begin{align*}
        I_\tau (\frac{T}{2}+a', T) = (1 + \bfO(c \tau))\, \cte e^{\delta T}.
    \end{align*}
\end{lemma}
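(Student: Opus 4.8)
The plan is to compute $I_\tau(T_1, T_2)$ directly from its definition as a time integral of the mixing correlation, using Lemma \ref{phi-mixing}. Recall from (\ref{I in sum}) that
$$I_\tau(T_1, T_2) = \int_{T_1}^{T_2} e^{\delta t} \left( \int_{T^1 M} \calG_t \cdot \barphi^1_\tau \times \iota \cdot \barphi^2_\tau \, d\mbarBM \right) dt.$$
First I would fix $a > 0$ and set $T_1 \coloneqq \frac{T}{2} + a'$, $T_2 \coloneqq T$ with $|a'| \le a$. The constraint $\tau \ge e^{-\kpa{\ref{I-et-kpa}} T}$ should be imposed strong enough (i.e. $\kpa{\ref{I-et-kpa}}$ small enough, at most $\kpa{\ref{phi-mixing-kpa}}/2$, say) so that for \emph{every} $t$ in the range $[\frac T2 + a', T]$ we have $\tau \ge e^{-\kpa{\ref{phi-mixing-kpa}} t}$; this is where we use $t \ge \frac{T}{2} + a' \ge \frac{T}{2} - a$, so that $e^{-\kpa{\ref{phi-mixing-kpa}} t} \le e^{-\kpa{\ref{phi-mixing-kpa}}(T/2 - a)}$, which is $\le \tau$ once $\kpa{\ref{I-et-kpa}}$ is suitably small relative to $\kpa{\ref{phi-mixing-kpa}}$ (absorbing the constant $e^{\kpa{\ref{phi-mixing-kpa}} a}$, hence the $a$-dependence of the final constant $c = c(a)$). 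Then Lemma \ref{phi-mixing} applies uniformly in $t$ over the whole integration range and gives
$$\int_{T^1 M} \calG_t \cdot \barphi^1_\tau \times \iota \cdot \barphi^2_\tau \, d\mbarBM = (1 + \bfO(\tau))\, \sigma(F_1)\sigma(F_2).$$

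Next I would substitute this into the integral and pull the $t$-independent factor out:
$$I_\tau(\tfrac T2 + a', T) = (1 + \bfO(c\tau))\, \sigma(F_1)\sigma(F_2) \int_{T/2 + a'}^{T} e^{\delta t}\, dt.$$
The remaining integral is elementary: $\int_{T/2+a'}^{T} e^{\delta t}\,dt = \frac{1}{\delta}\left( e^{\delta T} - e^{\delta(T/2 + a')} \right) = \frac{e^{\delta T}}{\delta}\left(1 - e^{-\delta T/2 + \delta a'}\right)$. Since $|a'| \le a$ is bounded and $T \ge 0$, the term $e^{-\delta T/2 + \delta a'}$ is $\bfO_a(e^{-\delta T/2})$, which is itself $\bfO_a(\tau)$ provided $\kpa{\ref{I-et-kpa}} \le \delta/2$ (since then $e^{-\delta T/2} \le e^{-\kpa{\ref{I-et-kpa}} T} \le \tau$). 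Hence $\int_{T/2+a'}^{T} e^{\delta t}\,dt = \frac{e^{\delta T}}{\delta}(1 + \bfO(c\tau))$, and combining with the above — and recalling the macro $\cte$ stands for $\frac{\sigma(F_1)\sigma(F_2)}{\delta}$ — yields
$$I_\tau(\tfrac T2 + a', T) = (1 + \bfO(c\tau))\, \cte\, e^{\delta T},$$
which is exactly the claim. The constant $\kpa{\ref{I-et-kpa}}$ is taken to be $\min\{\kpa{\ref{phi-mixing-kpa}}/2,\ \delta/2\}$ (or anything smaller), and $c = c(a)$ absorbs the factors $e^{\kpa{\ref{phi-mixing-kpa}} a}$ and $e^{\delta a}$ coming from the bounded shift $a'$.

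I do not expect any genuine obstacle here: the content of the lemma is entirely front-loaded into Lemma \ref{phi-mixing}, and this statement is just the bookkeeping step that converts the pointwise-in-$t$ mixing estimate into an integrated one. The only point requiring a little care is choosing $\kpa{\ref{I-et-kpa}}$ small enough that the hypothesis $\tau \ge e^{-\kpa{\ref{phi-mixing-kpa}} t}$ of Lemma \ref{phi-mixing} holds \emph{for all} $t$ in the integration window simultaneously — i.e. checking it at the left endpoint $t = \frac T2 + a'$ and observing the exponent there is at least $\frac{\kpa{\ref{phi-mixing-kpa}}}{2}T - \kpa{\ref{phi-mixing-kpa}} a$ — and making sure the same threshold also kills the lower-endpoint boundary term $e^{\delta(T/2 + a')}$ of the elementary integral. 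Both are handled by the single choice $\kpa{\ref{I-et-kpa}} = \min\{\kpa{\ref{phi-mixing-kpa}}/2, \delta/2\}$, with all $a$-dependent multiplicative slack pushed into $c(a)$.
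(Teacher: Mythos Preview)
Your proposal is correct and matches the paper's proof essentially line for line: apply Lemma \ref{phi-mixing} uniformly over $t \in [\tfrac{T}{2}+a',\,T]$, pull out the factor $(1+\bfO(\tau))\,\sigma(F_1)\sigma(F_2)$, evaluate the elementary integral $\int e^{\delta t}\,dt$, and absorb the lower-endpoint term $e^{\delta(T/2+a')}$ using $\kpa{\ref{I-et-kpa}} < \delta/2$. One small note: the paper takes $\kpa{\ref{I-et-kpa}}$ \emph{strictly} less than $\min\{\kpa{\ref{phi-mixing-kpa}}/2,\,\delta/2\}$ and then assumes $T$ large enough to handle the shift by $a'$ --- your phrase ``absorb $e^{\kpa{\ref{phi-mixing-kpa}} a}$ into $c(a)$'' is slightly imprecise, since the threshold $\tau \ge e^{-\kpa{\ref{phi-mixing-kpa}} t}$ in Lemma \ref{phi-mixing} is a \emph{hypothesis} that must be met, not a bound in the conclusion that can be enlarged; but your parenthetical ``or anything smaller'' already gives the correct fix.
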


\begin{proof}
    Let $\kpa{\ref{phi-mixing-kpa}}$ be the constant given by Lemma \ref{phi-mixing}. Choose $\kpa{\ref{I-et-kpa}} < 
    \min \{\frac{\kpa{\ref{phi-mixing-kpa}}}{2}, \frac{\delta}{2}\}$ and, without loss of generality, assume $T$ is large enough.
    Since $\kpa{\ref{I-et-kpa}} < \frac{\kpa{\ref{phi-mixing-kpa}}}{2}$,
    for $t \geq \frac{T}{2} + a'$ we have $\tau \geq e^{-\kpa{\ref{phi-mixing-kpa}}t}$, hence Lemma \ref{phi-mixing} implies that for such $t$,
    \begin{align*}
        &\int_{T^1 M} \calG_t \cdot \barphi^1_\tau \times \iota \cdot \barphi^2_\tau\, d\mbarBM = (1 + \bfO( \tau))\, \sigma(F_1) \sigma(F_2) \\ &\implies
        I_\tau(\frac{T}{2} + a', T) = 
        (1 + \bfO( \tau))\, \sigma(F_1) \sigma(F_2)\, \int_{\frac{T}{2} + a'}^T e^{\delta t}\, dt.
    \end{align*}
    Letting $c' \coloneqq e^{\delta a}$, we get
    \begin{align*}
        \int_{\frac{T}{2} + a'}^T e^{\delta t} dt = \frac{e^{\delta T}}{\delta} \big(1 + \bfO( c'
            e^{-\frac{\delta}{2}T}
        )\big),
    \end{align*} 
    thus
    \begin{align*}
        I_\tau(\frac{T}{2} + a', T) = 
        \big(1 + \bfO( \tau + c'e^{-\frac{\delta}{2}T}) \big) 
        \, \frac{\sigma(F_1) \sigma(F_2)}{\delta} e^{\delta T}.
    \end{align*}
    Since $\kpa{\ref{I-et-kpa}} < \frac{\delta}{2}$, the lemma follows.
\end{proof}

Recall the definitions of the adjusted height function $h(g)$ and counting sets $\calH(T)$ given at the beginning of this section. The following upper bound on $\# \calH(T)$ is the last ingredient of the proof of Theorem \ref{control counting}.
\begin{lemma} \label{upper calh}
    For $T \geq 0$, we have 
    $\# \calH(T) = \bfO(e^{\delta T})$.
\end{lemma}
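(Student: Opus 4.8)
The plan is to bound $\#\calH(T)$ by an ordinary lattice‑point count and then invoke Margulis' estimate (\ref{margulis-count}).

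First I would note that the adjustment functions are bounded: $F_2$ is continuous (being \holder{}-continuous) on the compact sphere $S(x)$, and $F_1$ is a continuous $\gamma$--invariant function on $\partial^1 L$, hence factors through the compact quotient $\gmcycl \backslash \partial^1 L$; moreover the error term in (\ref{error of h}) is $\bfO(1)$ because $d(gx, L) \geq 0$. Consequently there is a constant $C$ with $|h(g) - d(gx, L)| \leq C$ for every $g \in \Gamma$, so every $\gmcycl$--orbit $[g]$ counted by $\calH(T)$ satisfies $d(gx, L) \leq T + C$. It therefore suffices to bound the number of $\gmcycl$--orbits $[g]$ with $d(gx, L) \leq T + C$.

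Next I would pass to a fundamental domain on the axis. Since $P_L$ is $\Gamma$--equivariant and $\gamma$ acts on $L \cong \RR$ by the translation of length $d(y_0, \gamma. y_0)$, each $\gmcycl$--orbit $[g]$ contains exactly one representative $g$ with $P_L(gx) \in [y_0, \gamma. y_0)$. Sending each orbit to this representative injects $\{[g] : d(gx, L) \leq T + C\}$ into the set of $g \in \Gamma$ with $P_L(gx) \in [y_0, \gamma. y_0)$ and $d(gx, L) \leq T+C$; for such $g$ the triangle inequality gives
\begin{align*}
    d(o, gx) &\leq d(o, P_L(gx)) + d(P_L(gx), gx) \\
    &\leq d(o, y_0) + d(y_0, \gamma. y_0) + (T + C).
\end{align*}
Setting $C' \coloneqq d(o, y_0) + d(y_0, \gamma. y_0) + C$, this yields $d(o, gx) \leq T + C'$, whence
$$\#\calH(T) \leq \#\{g \in \Gamma : d(o, gx) \leq T + C'\} = \#\big(B_{T + C'}(o) \cap \Gamma \cdot x\big) = \bfO\big(e^{\delta(T + C')}\big) = \bfO\big(e^{\delta T}\big)$$
by (\ref{margulis-count}) (absorbing $e^{\delta C'}$ into the implied constant).

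There is no substantive obstacle in this argument; the only steps needing (minor) care are the boundedness of $F_1$, which rests on the compactness of $\gmcycl \backslash \partial^1 L$, and the observation that selecting the representative with $P_L(gx) \in [y_0, \gamma. y_0)$ is a genuine bijection on $\gmcycl$--orbits — this last point is what forces us to count orbits rather than group elements, since $d(\gamma^n gx, L) = d(gx, L)$ for all $n$.
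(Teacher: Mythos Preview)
Your proof is correct and follows essentially the same approach as the paper: bound $|h(g)-d(gx,L)|$ using boundedness of $F_1,F_2$, select in each $\gmcycl$--orbit the representative whose foot of perpendicular lands in a fixed fundamental interval on $L$, bound $d(o,gx)$ by the triangle inequality, and invoke the Margulis lattice-point count. Your version is in fact slightly more careful in two places: you justify the boundedness of $F_1$ via the compact quotient $\gmcycl\backslash\partial^1 L$, and you work with a general $y_0\in L$ rather than assuming $o\in L$ (one minor phrasing slip: $P_L$ is only equivariant for isometries preserving $L$, not all of $\Gamma$, but you only use it for $\gamma$, so the argument is fine).
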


\begin{proof}
    Recall that the origin $o$ belongs to the axis of $\gamma$, denoted by $L$. Define
    \begin{align*}
        \calH_\Delta(T) \coloneqq \{g x: [g] \in \calH(T),\, \text{ and }\, P_L(gx)\in [o, \gamma o)\},
    \end{align*}
    and note that we have $\# \calH(T) = \# \calH_\Delta (T)$.
    Given $gx \in \calH_\Delta(T)$ with large enough distance from $L$, by Equation (\ref{error of h}) we have $d(gx, L) \leq T + M_1 + M_2 + 1$, where $M_i$ is the supremum of $|F_i|$ for $i = 1, 2$. 
    By triangle inequality we have,
    $$d(o, gx) \leq d(o, p_L(gx)) + d(gx, L) \leq T + c$$
    for $c \coloneqq d(o, \gamma o) + M_1 + M_2 + 1$. Denoting the ball of radius $R$ centered at $o$ in $\mtilde$ by $\calB_R(o)$, the above inequality implies that $\calH_\Delta(T) \subset \Gamma \cdot x \cap \calB_{T + c}(o)$.
    Since $\# (\Gamma \cdot x \cap \calB_t(o)) = \bfO(e^{\delta t})$, the result follows.
\end{proof}

\begin{proof} [Proof of Theorem \ref{control counting}]
    Choose $\kappa'$ to be strictly less than $\min\{\frac{\kpa{\ref{h-equals-t}}}{2}, \frac{\kpa{\ref{J-almost-one-kpa}}}{2}, \frac{\delta}{2}  \}$, fix $T$ to be large enough, and let $\tau \coloneqq e^{-\kappa' T}$. For $0 <T_1 < T_2$, set
    \begin{align*}
        \calH(T_1, T_2) \coloneqq \{[g]: T_1 \leq h([g]) \leq T_2 \}.
    \end{align*}
    Since $\kappa' < \frac{\kpa{\ref{J-almost-one-kpa}}}{2}$, by Lemma \ref{j almost one} there exists a constant $c$ such that for every $[g] \in \calH(\frac{T}{2}, T)$ we have
    \begin{align*}
        1 - c\tau \leq J_\tau^\infty(g) \leq 1 + c\tau.
    \end{align*}

    For an element $g \in \Gamma$, let $t_{\max} (g)$ and $t_{\min}(g)$ denote the supremum and infimum of the set $\{t \geq 0: j_\tau (g, t) \neq 0  \}$ respectively. Since $\kappa' \leq \frac{\kpa{\ref{h-equals-t}}}{2}$, by Corollary \ref{h-almost-t} there exists a constant $c'$ such that for all $[g] \in \calH(\frac{T}{2}, T)$ we have
    \begin{align*}
        t_{\max}(g) \leq T + c' \tau,\, \text{ and }\, 
        t_{\min}(g) \geq \frac{T}{2} - c' \tau.
    \end{align*}
    Hence, for such $[g]$, the term 
    \begin{align*}
        J_\tau^\infty(g) = 
        \int_0^{\infty} e^{\delta t} J_\tau(g, t)\, dt = 
        \int_{t_{\min}(g)}^{t_{\max}(g)} 
        e^{\delta t} J_\tau(g, t)\, dt = 
        \int_{\frac{T}{2} - c'\tau}^{T + c'\tau} 
        e^{\delta t} J_\tau(g, t)\, dt
    \end{align*}
    contributes to the sum (\ref{I in sum}) written for $I_\tau(\frac{T}{2} - c'\tau, T + c'\tau)$. As a result,
    \begin{align*}
        I_\tau(\frac{T}{2} - c'\tau, T + c'\tau) \geq 
        \sum_{[g] \in \calH(\frac{T}{2}, T)} J_\tau^\infty(g) \geq
        (1 - c\tau)\, \# \calH(\frac{T}{2}, T).
    \end{align*}
    By Lemma \ref{I closed formula}, we obtain $c''$ such that 
    \begin{align*}
        \# \calH(\frac{T}{2}, T) \leq (1 + c''\tau)\, \cte e^{\delta T}.
    \end{align*}
    In a similar way we can find constants $c^{(3)}$ and $c^{(4)}$ such that
    \begin{align*}
        &I_\tau(\frac{T}{2} + c^{(3)} \tau, T - c^{(3)} \tau) \leq
         \sum_{[g] \in \calH(\frac{T}{2}, T)} J^\infty_\tau(g) \leq 
         (1 + c\tau)\, \# \calH(\frac{T}{2}, T) \\ &\implies
         (1 - c^{(4)} \tau)\, \cte e^{\delta T} \leq \# \calH(\frac{T}{2}, T).
    \end{align*}
    By Lemma \ref{upper calh}, 
    \begin{align*}
        \# \calH(T) = \# \calH(\frac{T}{2}) + \# \calH(\frac{T}{2}, T) = \# \calH(\frac{T}{2}, T) + \bfO(e^{\frac{\delta}{2} T}),
    \end{align*}
    hence the proposition follows from the above upper and lower bounds on $\# \calH(\frac{T}{2}, T)$. 
\end{proof}



\section{Radius \holder{}-continuity of Bowen-Margulis measure} \label{rhc-section}

{\bf Definitions of RC and RHC.}
Radius \holder{}-continuity of Bowen-Margulis measures of strongly stable (unstable) balls, referred to as RHC throughout the text (see below for the precise definition), was first introduced in \cite{PPCommonPerp} as a technical assumption to obtain exponentially small error terms. However, in that paper, this condition was only verified for compact manifolds of constant negative curvature, in which the measure of a strongly stable ball is a smooth function of its radius. The goal of this section is to prove the RHC condition under more relaxed assumptions. (See Theorem \ref{radius-hold-cont}.) We adopt the notation introduced in Section \ref{test-intro}, and for $v \in T^1 \mtilde$, define the distance $\dss_v$ on $\wss{v}$ by 
$$\dss_v(u, u') \coloneqq d_{\mtilde}(\pi(u), \pi(u')).$$  
Recall the definition of strongly stable ball of radius $r > 0$ centered at $v$,
$$ \bss{v, r} = \{u \in \wss{v}: \dss_v(v, u) \leq r\},$$
and for a set $S \subset \wss{v}$, define
$$\bss{S, r} \coloneqq \bigcup_{u \in S} \bss{u, r}.$$
Define the strongly stable sphere of radius $r$ centered at $v$ by 
\begin{align*}
    \sss(v, r) \coloneqq \{u \in \wss{v}: \dss_v(v, u) = r \}.
\end{align*}
\begin{proposition} \label{rc_cond}
    The following are equivalent.
    \begin{enumerate} [(i)]
        \item \label{rc1} $\muss_v(\sss(v, r)) = 0$ for all $v \in T^1 \mtilde$ and $r > 0$.
        \item \label{rc2} For every $v \in T^1 \mtilde$, $\muss_v(\bss{v, r})$ is a continuous function of $r>0$.
        \item \label{rc3} $(v, r) \mapsto \muss(\bss{v, r})$ is a continuous function on $T^1 \mtilde \times \RR^{> 0}$.
    \end{enumerate}
\end{proposition}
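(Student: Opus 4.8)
The strategy is the standard chain of implications $(\ref{rc1}) \Rightarrow (\ref{rc3}) \Rightarrow (\ref{rc2}) \Rightarrow (\ref{rc1})$, where the only substantive step is the first one. The implication $(\ref{rc3}) \Rightarrow (\ref{rc2})$ is immediate by restricting the jointly continuous function to a fixed slice $\{v\} \times \RR^{>0}$. For $(\ref{rc2}) \Rightarrow (\ref{rc1})$, fix $v$ and $r > 0$; since $\bss{v, r} = \bigcap_{s > r} \bss{v, s}$ and, for $s < r$, $\bss{v, s} \subset \bss{v,r} \setminus \sss(v, r)$, monotonicity of the finite measure $\muss_v$ together with left- and right-continuity of $s \mapsto \muss_v(\bss{v,s})$ at $s = r$ forces $\muss_v(\sss(v, r)) = \lim_{s \downarrow r} \muss_v(\bss{v,s}) - \lim_{s \uparrow r}\muss_v(\bss{v,s}) = 0$. (One should note that the total mass $\muss_v(\wss{v})$ may be infinite, but each $\bss{v,r}$ has finite mass since $\mu_o$ is finite and the Radon--Nikodym factor $e^{-\delta \beta_o(v^-, \pi(v))}$ is bounded on the relevant compact set of endpoints; this is enough to run the monotone-limit argument.)

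For $(\ref{rc1}) \Rightarrow (\ref{rc3})$, the plan is to fix $(v_0, r_0) \in T^1\mtilde \times \RR^{>0}$ and a sequence $(v_n, r_n) \to (v_0, r_0)$, and to show $\muss_{v_n}(\bss{v_n, r_n}) \to \muss_{v_0}(\bss{v_0, r_0})$. The first move is to transport everything to a common space: using the homeomorphism $u \mapsto u^-$ from $\wss{v}$ onto $\bdry \setminus \{v^+\}$, push the measures $\muss_{v_n}$ forward to measures $\nu_n$ on $\bdry$, absolutely continuous with respect to $\mu_o$ with density $\zeta \mapsto e^{-\delta \beta_o(\zeta, \pi(u_\zeta))}$ where $u_\zeta \in \wss{v_n}$ is the vector with $u_\zeta^- = \zeta$. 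Since $v_n \to v_0$ and the endpoint maps $u \mapsto u^\pm$ and the basepoint projection are continuous (Corollary~\ref{plus-holder} and the continuity of $\pi$), and $\beta_o$ is continuous by Lemma~\ref{buseman lip-holder}, these densities converge uniformly on compact subsets of $\bdry \setminus \{v_0^+\}$; hence $\nu_n \to \nu_0$ setwise on Borel sets bounded away from $v_0^+$. The balls also converge: $\bss{v_n, r_n}$ corresponds under $u \mapsto u^-$ to the set $A_n := \{\zeta : d(\pi(v_n), \pi(u_\zeta)) \le r_n\}$, and by the same continuity statements, for any $\epsilon > 0$ one has $A_0^{r_0 - \epsilon} \subset A_n \subset A_0^{r_0 + \epsilon}$ for $n$ large (using shorthand $A_0^{s} = \{\zeta: d(\pi(v_0), \pi(u^0_\zeta)) \le s\}$), because the function $\zeta \mapsto d(\pi(v_0), \pi(u^0_\zeta))$ is continuous and the perturbation in $v_n$ and $r_n$ is small. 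Combining, $\limsup_n \nu_n(A_n) \le \nu_0(A_0^{r_0 + \epsilon})$ and $\liminf_n \nu_n(A_n) \ge \nu_0(A_0^{r_0 - \epsilon})$; letting $\epsilon \to 0$ and invoking hypothesis $(\ref{rc1})$ — namely $\nu_0(\{\zeta : d(\pi(v_0), \pi(u^0_\zeta)) = r_0\}) = 0$, which says precisely $\muss_{v_0}(\sss(v_0, r_0)) = 0$ — sandwiches the limit at $\nu_0(A_0^{r_0}) = \muss_{v_0}(\bss{v_0, r_0})$.

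The main obstacle is the uniformity in the above sandwich near the excluded point $v_0^+$: one must check that the parts of $A_n$ and the mass of $\nu_n$ lying in a small neighborhood of $v_0^+$ contribute negligibly, uniformly in $n$. This is where the geometry enters: if $d(\pi(v_n), v_0^+)$-close vectors were contributing large strongly-stable-ball mass one could have a leak. The fix is to observe that $v_0^+ \notin A_0^{r_0}$ (a point with $u_\zeta^- = v_0^+$ would have $u_\zeta^+ = u_\zeta^- $, impossible, so in fact $v_0^+$ is genuinely a point at infinite $\dss$-distance), so $A_0^{r_0 + \epsilon}$ stays a definite distance away from $v_0^+$ for small $\epsilon$; then the densities are uniformly bounded on $A_0^{r_0+\epsilon}$ and there is no leak. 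I would write this last point carefully, since it is the only place the argument could go wrong, and otherwise the proof is a routine setwise-convergence-of-measures argument dressed in the coordinates of Section~\ref{test-intro}.
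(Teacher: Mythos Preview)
Your argument is correct. The paper's proof is much terser: it declares $(\ref{rc1}) \iff (\ref{rc2})$ and $(\ref{rc3}) \Rightarrow (\ref{rc2})$ obvious, and for the only nontrivial implication $(\ref{rc2}) \Rightarrow (\ref{rc3})$ it simply cites Lemma~1.16 of Roblin's thesis. You instead close the cycle via $(\ref{rc1}) \Rightarrow (\ref{rc3})$ and write out the argument in full: push everything to $\bdry$ via $u \mapsto u^-$, use continuity of $v \mapsto v^+$, of $\pi$, and of the Busemann cocycle to get uniform convergence of the densities on compacta away from $v_0^+$, and then run a Portmanteau-type sandwich on the image sets $A_n$, invoking $(\ref{rc1})$ to kill the boundary term. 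This is essentially the content behind Roblin's lemma, so the mathematics is the same; what your route buys is self-containment (no external reference needed), at the cost of having to handle the compactness and ``no leak near $v_0^+$'' issues explicitly---which you do, correctly, by observing that a finite-radius strongly stable ball corresponds to a subset of $\bdry$ bounded away from $v_0^+$. One small cosmetic point: in your $(\ref{rc2}) \Rightarrow (\ref{rc1})$ step, the map $s \mapsto \muss_v(\bss{v,s})$ is automatically right-continuous (monotone limits of nested closed balls of finite measure), so only left-continuity is actually being used; but what you wrote is still correct as stated.
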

\begin{proof}
     $(\ref{rc1}) \iff (\ref{rc2})$ and $(\ref{rc3}) \implies (\ref{rc2})$ are obvious. $(\ref{rc2}) \implies (\ref{rc3})$ follows from Lemma 1.16 of \cite{roblin-thesis}.
\end{proof}

\begin{definition}
    If the items in Proposition \ref{rc_cond} are satisfied, we say that the strongly stable Bowen-Margulis measure of balls is radius-continuous, or in short, the RC condition is satisfied. 
\end{definition}

For constants $\alpha, c > 0$ and $v\in T^1 \mtilde$ and $r > 0$, we say that $\sss(v, r)$ is \emph{$(\alpha, c)$ \holder{}-thin}, or equivalently, $\bss{v, r}$ has \emph{$(\alpha, c)$ \holder{}-thin boundary}, if for every $\epsilon \leq 1$ we have
\begin{align*}
    \muss_v(\bss{\sss(v, r), \epsilon}) \leq c\epsilon^\alpha.
\end{align*}

\begin{proposition} \label{rhc_cond}
    The following are equivalent.
    \begin{enumerate} [(i)]
        \item \label{rhc1} There exists a constant $\alpha > 0$ such that the following holds. For every $r_0 > 0$ there exists a constant $c = c(r_0)$ such that $\sss(v, r)$ is $(\alpha, c)$ \holder{}-thin for all $v \in T^1 \mtilde$ and $0 \leq r \leq r_0$.
        \item \label{rhc2} There exists a constant $\alpha' > 0$ such that the following holds. For every $r_0 > 0$ there exists a constant $c' = c'(r_0)$ such that for every $v \in T^1 \mtilde$, $\muss_v(\bss{v, r})$ is an $(\alpha', c')$ \holder{}-continuous function of $0 \leq r \leq r_0$.
        \item \label{rhc3} $(v, r) \mapsto \muss_v (\bss{v, r})$ is a \holder{}-continuous function on $T^1 \mtilde \times \RR^{\geq 0}$.
    \end{enumerate}
\end{proposition}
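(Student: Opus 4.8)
The plan is to follow the architecture of Proposition \ref{rc_cond}: two of the needed implications are formal, one rests on a single geometric lemma about horospheres, and the substance is the passage from ``\holder{} in $r$, uniformly in the basepoint'' to ``jointly \holder{} in $(v,r)$''. I would prove $(\ref{rhc2})\Rightarrow(\ref{rhc1})$, $(\ref{rhc3})\Rightarrow(\ref{rhc2})$ (the easy ones), $(\ref{rhc1})\Rightarrow(\ref{rhc2})$, and finally $(\ref{rhc2})\Rightarrow(\ref{rhc3})$.

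For $(\ref{rhc2})\Rightarrow(\ref{rhc1})$: since $\dss_v$ is an honest metric on $\wss{v}$, any $u\in\bss{\sss(v,r),\epsilon}$ satisfies $|\dss_v(v,u)-r|\le\epsilon$, so $\bss{\sss(v,r),\epsilon}\subseteq\bss{v,r+\epsilon}\setminus\bss{v,\max\{r-\epsilon,0\}}$, whence $\muss_v(\bss{\sss(v,r),\epsilon})\le c'(2\epsilon)^{\alpha'}$ by $(\ref{rhc2})$ (using $\epsilon\le1$ to stay inside a fixed radius bound, and, when $r<\epsilon$, the atom-freeness of $\mu_o$, so that $\muss_v(\{v\})=0$); this is $(\ref{rhc1})$ with $\alpha=\alpha'$. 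For $(\ref{rhc3})\Rightarrow(\ref{rhc2})$: the map $(v,r)\mapsto\muss_v(\bss{v,r})$ is $\Gamma$-invariant and $\Gamma$ acts on $T^1\mtilde$ by Sassaki isometries (and $\muss_v$ depends only on $\wss v$), so a \holder{} bound on the compact set $K\times[0,r_0]$ with $K$ a fundamental domain transports, with the same constant, to $T^1\mtilde\times[0,r_0]$, uniformly in $v$; specialising to fixed $v$ gives $(\ref{rhc2})$.

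For $(\ref{rhc1})\Rightarrow(\ref{rhc2})$ the step I would isolate is a uniform lower bound on the intrinsic gradient of the horospherical distance: for each $r_0>0$ there is $c_0=c_0(r_0)>0$ such that, for every $v$, the restriction to the horosphere $\pi(\wss{v})$ of $q\mapsto d_\mtilde(\pi(v),q)$ has Riemannian gradient of norm $\ge c_0$ wherever $0<d_\mtilde(\pi(v),q)\le r_0$. Granting this, every point of $\sss(v,t_0)$ lies within $\dss_v$-distance $(t_0-r)/c_0$ of $\sss(v,r)$ for $0<r<t_0\le r_0$, so $\bss{v,r'}\setminus\bss{v,r}\subseteq\bss{\sss(v,r),(r'-r)/c_0}$; together with the bound $\muss_v(\bss{v,r})\le c\,r^{\alpha}$ coming from $(\ref{rhc1})$ applied to $\sss(v,0)=\{v\}$ (and the trivial bound $\muss_v(\bss{v,r_0})=\bfO(1)$ for large gaps) this yields $(\ref{rhc2})$ with $\alpha'=\alpha$. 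The gradient bound itself I expect to be routine: the norm in question is the sine of the angle between the geodesic $[\pi(v),q]$ and the normal to $\pi(\wss{v})$ at $q$; it tends to $1$ as $d_\mtilde(\pi(v),q)\to0$ (a short chord of a $C^1$ horosphere is nearly tangent to it) and, by cocompactness of $M$, is bounded below on $\{c'\le d_\mtilde(\pi(v),\cdot)\le r_0\}$ for every $c'>0$, hence bounded below on the whole punctured annulus, uniformly in $v$.

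The hard part will be $(\ref{rhc2})\Rightarrow(\ref{rhc3})$. By $\Gamma$-equivariance it suffices to bound $|\muss_v(\bss{v,r})-\muss_{v'}(\bss{v',r'})|$ for $v,v'$ at Sassaki distance $\le1$ and $r,r'\in[0,r_0]$, and $(\ref{rhc2})$ already absorbs the change of radius, so the task is to compare $\muss_v(\bss{v,r})$ with $\muss_{v'}(\bss{v',r})$. I would identify $\wss{v}$ with $\wss{v'}$ by the map $\theta$ sending $u\in\wss{v}$ to the unique $\theta(u)\in\wss{v'}$ with $\theta(u)^-=u^-$ (well defined on $\bss{v,r_0}$ once $v$ is close to $v'$, since then the negative endpoints of such $u$ form a compact subset of $\bdry\setminus\{(v')^+\}$), and then show, using the \holder{} machinery of Section \ref{hold_section} --- the \holder{} dependence on $v$ of the point of $\wss{v}$ lying over a given $\xi\in\bdry$, together with \holder{} continuity of the Busemann cocycle (Lemma \ref{buseman lip-holder}) --- that $d_\mtilde(\pi(u),\pi(\theta(u)))=\bfO(d_S(v,v')^{\beta_1})$ for some $\beta_1>0$, uniformly on $\bss{v,r_0}$. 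From this the Radon--Nikodym density $d(\theta_*\muss_v)/d\muss_{v'}$, which at $\theta(u)$ equals $e^{\delta\beta(u^-,\pi(u),\pi(\theta(u)))}$, is $1+\bfO(d_S(v,v')^{\beta_1})$, while $\theta(\bss{v,r})$ is sandwiched between $\bss{v',\,r-\bfO(d_S(v,v')^{\beta_1})}$ and $\bss{v',\,r+\bfO(d_S(v,v')^{\beta_1})}$, so $(\ref{rhc2})$ forces the symmetric difference $\theta(\bss{v,r})\,\triangle\,\bss{v',r}$ to have $\muss_{v'}$-measure $\bfO(d_S(v,v')^{\alpha'\beta_1})$; combining these with $\muss_{v'}(\bss{v',r_0})=\bfO(1)$ gives $|\muss_v(\bss{v,r})-\muss_{v'}(\bss{v',r})|=\bfO(d_S(v,v')^{\alpha'\beta_1})$, and with the radius estimate this is $(\ref{rhc3})$. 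The genuine obstacle is thus exactly the \holder{} continuity of the strong stable manifold as a function of its basepoint; this is where Section \ref{hold_section} does the real work, and at worst it can be replaced by invoking \holder{} continuity of the stable foliation of the geodesic flow.
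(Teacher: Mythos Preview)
Your proposal is correct and matches the paper's approach: the paper handles $(\ref{rhc1})\Leftrightarrow(\ref{rhc2})$ via the same two annulus inclusions (simply asserting the existence of your constant $c_0$), calls $(\ref{rhc3})\Rightarrow(\ref{rhc2})$ obvious, and defers $(\ref{rhc2})\Rightarrow(\ref{rhc3})$ to Lemma~\ref{mub-holder}, whose argument is exactly your map $\theta$ (there denoted $H^{uv}$) identifying strong stable leaves via negative endpoints, together with the displacement and Radon--Nikodym estimates you outline. Your $\Gamma$-equivariance remark for the uniformity in $(\ref{rhc3})\Rightarrow(\ref{rhc2})$ and your gradient justification of $c_0$ are more careful than the paper, but not different in substance.
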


\begin{proof}
    $(\ref{rhc2}) \implies (\ref{rhc1})$ follows since
    $$\bss{\sss(v, r), \epsilon} \subset \bss{v, r + \epsilon} \setminus \mathring{\text{B}}^{\text{ss}} ({v, r - \epsilon}),$$
    for all $0 \leq r,\, v \in T^1 \mtilde$, and $\epsilon \leq 1$. $(\ref{rhc1}) \implies (\ref{rhc2})$ follows since there exists a constant $c_0 = c_0(r_0) > 0$ such that
    $$\bss{v, r + \epsilon} \setminus \bss{v, r - \epsilon} \subset \bss{\sss(v, r), c_0\epsilon},$$
    for all $0 \leq r \leq r_0,\, v \in T^1 \mtilde$, and $\epsilon \leq 1$. $(\ref{rhc3}) \implies (\ref{rhc2})$ is obvious, and $(\ref{rhc2}) \implies (\ref{rhc3})$ is proved in Lemma \ref{mub-holder}.
\end{proof}

\begin{definition} \label{rhc_def}
    If the items in Proposition \ref{rhc_cond} are satisfied, we say that the strongly stable Bowen-Margulis measure of balls is radius \holder{}-continuous, or in short, the RHC condition is satisfied.
\end{definition}

{\bf Comments on RC and RHC.} For this discussion, let $M$ be a manifold of variable negative curvature which is not necessarily compact. Fix $x, y \in \mtilde$, and let $\calB(T)$ denote the set of $\Gamma$--orbits of $y$ that lie in the ball of radius $T$ centered at $x$. If $M$ is $2$--dimensional and geometrically finite, the RC condition is satisfied since $\mu_o$ has no atoms, and RHC is satisfied by an argument similar to that of Theorem \ref{radius-hold-cont}. Hence, for the rest of this discussion, we may assume that $M$ is of dimension at least three. 

If $M$ is compact, asymptotics for $\#\calB(T)$ as $T$ goes to infinity was obtained by Margulis \cite{margulis-thesis} by considering a partition of $V \coloneqq S(x)$ into finitely many sets $V_i$, where each $V_i$ satisfies conditions (1)-(4) of Lemma 7.3 of that paper. (See Theorem 8 and the proof of Theorem 6 of that paper.) Condition (4) of that lemma is that the boundary of $V_i$ has measure zero (where the measure on $S(x)$ is the pullback of $\mu_x$ under the homeomorphism from $S(x)$ to $\bdry$ given by $u \mapsto u^+$). 
Choosing a partition that satisfies conditions (1)-(3) of that lemma and perturbing it slightly, we can always obtain a partition that also satisfies the fourth condition.

If $M$ is geometrically finite manifold of varibale negative curvature (and under the additional assumption that $\mu_o$ is finite), the asymptotics for $\#\calB(T)$ was obtained by Roblin \cite{roblin-thesis}. As mentioned earlier, our proof for Theorem \ref{control counting} resembles the proof of Theorem 4.1.1 of \cite{roblin-thesis}. In particular, our definition of a dynamical neighbourhood, given in Section \ref{test-intro}, is similar to \cite{roblin-thesis} (see the definition of $K(z, r)$ at page 58 of \cite{roblin-thesis}, also see Remark \ref{two-ham}), and in fact, Roblin faces a similar problem regarding the boundary of the strongly stable balls. Since \cite{roblin-thesis} is not concerned with error terms, it suffices that the RC condition is satisfied. If $M$ is geometrically finite and of constant negative curvature, the RC condition is proved in \cite[Lemma 1]{Rudolph} and \cite[Proposition 3.1]{roblin-rational}, but we could not understand either of these proofs. If $M$ is compact and of constant negative curvature, the RC condition is trivial, but we could not verify this condition for compact manifolds of variable negative curvature.
In page 81 of \cite{roblin-thesis}, Roblin mentions that 
``la condition technique exigeant que le bord des boules instables soit n {\'e}gligeable est acquise en courbure constante (voir \cite{roblin-rational}, proposition 3.1), tandis qu'autrement il est toujours possible de l' {\'e}luder {\`a}l'aidede fonctions plateaux.''
We do not know the exact method Roblin had in mind while writing those words.

Inspecting the proof of \cite[Theorem 4.1.1]{roblin-thesis}, we realize that it is not sensitive to the shape of the strongly stable sets $\bss{u, r}$, and it goes through as long as they have measure zero boundary. 
This leads us to believe that (as in \cite{margulis-thesis}) fixing a partition $\bar{\calP} = \{\bar{V}_i\}$ of $\bdry$, where each $\bar{V}_i$ has measure zero boundary (with respect to the measure $\mu_o$, and hence $\mu_z$ for all $z \in \mtilde$), and then definining the sets $\bss{u, \bar{\calP}} \subset \wss{u}$ using this partition, one can circumvent the RC problem in Roblin. Assuming that the Bowen-Margulis measure is finite, such a partition can always be obtained by perturbing a given partition. To tackel the RHC problem using the same idea, we need for the boundary of $V_i$ to be \holder{}-thin. However, we do not know how to construct such partitions in general, even when $M$ is compact.  

\medskip
{\bf Proof of RHC under additional assumptions.}
We use the following two theorems in the proof of Theorem \ref{radius-hold-cont}.

\begin{theorem} \label{jacobi horor} {\bf \cite[Theorem 2.4]{horosphere_geom}}
    Let $v \in T^1 \mtilde$ and $u \in S(\pi (v))$ be perpendicular to $v$. Let $J(t)$ denote the Jacobi field along $v_t$ such that $J(0) = u$ and the norm of $J(t)$ is bounded as $t$ goes to $-\infty$. Then for $t \geq 0$ we have
    $$\norm{J(t)} \leq e^{\clow t} \norm{u}.$$ 
\end{theorem}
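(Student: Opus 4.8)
The plan is to derive the bound from a Riccati comparison argument that uses only the lower curvature bound $K\geq-\clow^2$.

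First I would identify $J$ precisely. Since $u\perp v$ and the curvature is negative, $J(t)$ remains perpendicular to $\dot v_t$ for all $t$, and among the $(n-1)$--parameter family of perpendicular Jacobi fields with $J(0)=u$ (parametrized by $J'(0)$), exactly one is bounded — in fact decays exponentially — as $t\to-\infty$; this is the \emph{unstable} Jacobi field, tangent to the unstable horosphere centered at $v^-$, so that is what $J$ must be. I would also note that a nonzero unstable Jacobi field never vanishes, so $\norm{J(t)}>0$ for every finite $t$.

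Next I would bring in the unstable Riccati operator. For each $s<0$, let $U_s(t)$ (for $t>s$) be the symmetric endomorphism of $\dot v_t^{\perp}$ with $J_s'(t)=U_s(t)J_s(t)$ for every perpendicular Jacobi field $J_s$ vanishing at $s$; it solves the matrix Riccati equation $U_s'+U_s^2+R(t)=0$, where $R(t)w=R(w,\dot v_t)\dot v_t$ is symmetric with eigenvalues in $[-\clow^2,-1]$, and $U_s(t)\sim\tfrac{1}{t-s}\,\mathrm{Id}$ as $t\to s^{+}$. In the model space of constant curvature $-\clow^2$ the corresponding operator is $\clow\coth(\clow(t-s))\,\mathrm{Id}$, with the same singular behaviour at $t=s$, so since $R(t)\geq-\clow^2\,\mathrm{Id}$ the standard Riccati comparison gives $U_s(t)\leq\clow\coth(\clow(t-s))\,\mathrm{Id}$ for all $t>s$. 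Letting $s\to-\infty$, $U_s(t)$ converges to the unstable operator $U(t)$ (the one with $J'=UJ$ along every unstable Jacobi field) while $\coth(\clow(t-s))\to1$, so $U(t)\leq\clow\,\mathrm{Id}$ for every $t$. Then, for $t\geq0$,
\[
\frac{d}{dt}\log\norm{J(t)}=\frac{\langle J'(t),J(t)\rangle}{\norm{J(t)}^{2}}=\frac{\langle U(t)J(t),J(t)\rangle}{\norm{J(t)}^{2}}\leq\clow ,
\]
and integrating from $0$ to $t$ yields $\norm{J(t)}\leq e^{\clow t}\norm{u}$, as claimed.

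The delicate points — the ones that actually use the geometry rather than a single line of estimate — are that the unstable Jacobi field and its Riccati operator $U$ arise as the $s\to-\infty$ limits of the finite-time data $J_s$ and $U_s$, and the Riccati comparison inequality with matching singular initial data at $t=s$; both are standard in the theory of manifolds without conjugate points (Eberlein, Heintze--Im Hof, Ballmann), and I would invoke them as such. It is worth stressing that a purely scalar comparison for $f=\norm{J}$ does not suffice: from $f f''=\norm{J'}^{2}-(f')^{2}-Kf^{2}$ one only reads off the lower bound $f''\geq-Kf$, whereas the statement needs an \emph{upper} bound on the growth of $f$, which is precisely what the matrix Riccati equation (equivalently, the index-form version of Rauch's theorem) supplies.
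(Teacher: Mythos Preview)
Your argument is correct and is essentially the standard proof; the Riccati comparison using the lower curvature bound $R(t)\geq -\clow^{2}\,\mathrm{Id}$, the passage from the finite--time operators $U_{s}$ to the unstable operator $U$, and the final logarithmic differentiation are all sound. Note, however, that the paper does not give its own proof of this statement: it is quoted verbatim as \cite[Theorem~2.4]{horosphere_geom} (Heintze--Im~Hof), so there is nothing to compare against beyond observing that what you wrote is precisely the argument behind that reference.
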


 The following theorem is well-known. See \cite{sarnak_formula} for a discussion, and a stronger version.
\begin{theorem} \label{low_entrop}
    Let $M$ be an $n$--dimensional compact manifold with curvature bounded above by $-1$, and let $\delta$ denote the topological entropy of geodesic flow on the unit tangent bundle of $M$. Then we have
    $$\delta \geq n-1,$$
    and the equality holds if and only if the curvature is constantly equal to $-1$.
\end{theorem}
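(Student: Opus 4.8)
The plan is to route everything through Pesin's entropy formula for the Liouville measure together with a Riccati comparison for horospheres, which delivers both the inequality and the rigidity in one package; for the plain inequality one can alternatively cite Manning's identification of $\delta$ with the volume entropy of $\mtilde$ and a G\"unther volume comparison. Let $m_{\mathrm{Liou}}$ be the normalized Liouville probability measure on $T^1M$, invariant under the geodesic flow $\calG_t$. For $u\in T^1M$ let $U^{\mathrm u}(u)$ be the second fundamental form of the unstable horosphere through $u$; along the geodesic $\gamma$ determined by $u$ it solves the matrix Riccati equation $\dot U^{\mathrm u}+(U^{\mathrm u})^2+\calR=0$, where $\calR$ is the curvature operator $w\mapsto R(w,\dot\gamma)\dot\gamma$, whose eigenvalues are the sectional curvatures of planes containing $\dot\gamma$, so $\calR\le-\mathrm{Id}$. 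In constant curvature $-1$ the unstable solution is $U^{\mathrm u}\equiv\mathrm{Id}$, so the comparison theorem for solutions of the Riccati equation (a more negative curvature operator enlarges the unstable solution) gives $U^{\mathrm u}(u)\ge\mathrm{Id}$, hence $\operatorname{tr} U^{\mathrm u}(u)\ge n-1$, for every $u$. The positive Lyapunov exponents of $\calG_t$ come from the unstable distribution, and the integral of their sum against $m_{\mathrm{Liou}}$ equals $\int_{T^1M}\operatorname{tr} U^{\mathrm u}\,dm_{\mathrm{Liou}}$; since $m_{\mathrm{Liou}}$ is absolutely continuous, Pesin's entropy formula gives $h_{m_{\mathrm{Liou}}}=\int_{T^1M}\operatorname{tr} U^{\mathrm u}\,dm_{\mathrm{Liou}}$. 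Combining with the variational principle we obtain
\[ \delta \;=\; h_{\mathrm{top}} \;\ge\; h_{m_{\mathrm{Liou}}} \;=\; \int_{T^1M}\operatorname{tr} U^{\mathrm u}\,dm_{\mathrm{Liou}} \;\ge\; n-1. \]

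For the rigidity, suppose $\delta=n-1$. Then the displayed chain collapses: $\int_{T^1M}\operatorname{tr} U^{\mathrm u}\,dm_{\mathrm{Liou}}=n-1$, while $\operatorname{tr} U^{\mathrm u}\ge n-1$ pointwise, so $\operatorname{tr} U^{\mathrm u}=n-1$ holds $m_{\mathrm{Liou}}$-almost everywhere; because $U^{\mathrm u}$ varies continuously with $u$ and $m_{\mathrm{Liou}}$ has full support, $\operatorname{tr} U^{\mathrm u}\equiv n-1$ on all of $T^1M$. Since $U^{\mathrm u}\ge\mathrm{Id}$ and its trace is $n-1$, necessarily $U^{\mathrm u}\equiv\mathrm{Id}$; then $\dot U^{\mathrm u}\equiv0$, and plugging into the Riccati equation forces $\calR\equiv-\mathrm{Id}$ along every geodesic, i.e.\ every sectional curvature of $M$ equals $-1$. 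The converse is classical: if $M$ has constant curvature $-1$ then $\mtilde=\HH^n$, $\Vol(B_R(o))\asymp e^{(n-1)R}$, and Manning's theorem gives $\delta=n-1$.

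The hard part is the rigidity direction, whose engine is the combination of Pesin's entropy formula for the smooth invariant measure (identifying its metric entropy with the integrated mean curvature of unstable horospheres) and the sharp Riccati comparison $U^{\mathrm u}\ge\mathrm{Id}$ forced by $K\le-1$; both are classical but nontrivial. The delicate point, and the reason for working with $m_{\mathrm{Liou}}$ rather than the Bowen--Margulis measure, is the upgrade from ``$\operatorname{tr} U^{\mathrm u}=n-1$ almost everywhere'' to ``everywhere'', and then to $\calR\equiv-\mathrm{Id}$: this relies on $m_{\mathrm{Liou}}$ having full support and on $U^{\mathrm u}$ being continuous. Since the statement is classical (see \cite{sarnak_formula} and the references there), in the paper it is simply quoted rather than reproved.
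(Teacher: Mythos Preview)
Your argument is correct: the Riccati comparison $U^{\mathrm u}\ge\mathrm{Id}$ under $K\le -1$, Pesin's formula for the Liouville measure, and the variational principle give the inequality, and the continuity of $U^{\mathrm u}$ together with the full support of $m_{\mathrm{Liou}}$ handles the rigidity cleanly. This is indeed the standard route to this result.

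As you yourself observe at the end, the paper does not prove this theorem at all: it is stated as well-known, with a pointer to \cite{sarnak_formula} for discussion and a stronger version, and then invoked as a black box in the proof of Theorem~\ref{radius-hold-cont}. So there is no ``paper's own proof'' to compare your approach against; you have supplied a proof where the paper deliberately omits one.
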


\nextalfa{rhc-hold}
For the following theorem, recall that $\clow$ is such that the curvature of $M$ is bounded below by $- \clow^2$.
\begin{theorem} \label{radius-hold-cont}
    Assume $M$ is two-dimensional, or $M$ is of dimension $n \geq 3$ and $\clow \leq \clowval$. Then there exists a constant $\alfa{\ref{rhc-hold}} > 0$ such that the following holds.
    For every $r_0 > 0$, there exists a constant $c = c(r_0)$ such that for every $v \in T^1 \mtilde$, $\mub{v, r}$ is an \hold{(\alfa{\ref{rhc-hold}},c)} function of $0 \leq r \leq r_0$.   
\end{theorem}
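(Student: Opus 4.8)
The plan is to derive the theorem from Proposition~\ref{rhc_cond}: since its conclusion is exactly item~(\ref{rhc2}) there, it suffices to verify item~(\ref{rhc1}), i.e.\ to produce $\alpha>0$ such that for every $r_0>0$ there is $c=c(r_0)$ with $\muss_v(\bss{\sss(v,r),\epsilon})\le c\,\epsilon^{\alpha}$ for all $v\in T^1\mtilde$, $0\le r\le r_0$, and $0<\epsilon\le1$. As the inequality is trivial once $\epsilon$ is bounded below, we may assume $\epsilon\le\tfrac12$. I would obtain it by combining a covering estimate with a pointwise mass estimate. \emph{Covering estimate:} in the Hadamard manifold $\mtilde$ the function $d(\pi(v),\param)$ is smooth and strictly convex off $\pi(v)$, and its gradient is normal to the horosphere $\pi(\wss{v})$ only at $\pi(v)$ itself, so $\sss(v,r)$ is a smoothly embedded $(n-2)$--sphere in $\pi(\wss{v})$ whose $(n-2)$--volume is bounded uniformly in $v$ and in $r\le r_0$ (by comparison with constant curvature, using $-\clow^2\le K\le-1$). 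A packing argument then covers $\sss(v,r)$ by $N\lesssim_{r_0}\epsilon^{-(n-2)}$ balls $\bss{u_i,\epsilon}$, so that $\bss{\sss(v,r),\epsilon}\subset\bigcup_i\bss{u_i,2\epsilon}$. \emph{Pointwise estimate:} $\mub{u,\rho}\lesssim\rho^{\,\delta/\clow}$ for all $u\in T^1\mtilde$ and $0<\rho\le1$. Granting these, and using that $\muss_v$ restricted to $\wss{v}$ equals $\muss_{u_i}$ (each $u_i$ lies on $\wss{v}$, and $\muss$ depends only on $\wss{\cdot}$), one gets $\muss_v(\bss{\sss(v,r),\epsilon})\le\sum_i\mub{u_i,2\epsilon}\lesssim_{r_0}\epsilon^{\,\delta/\clow-(n-2)}$.

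It then remains to check that the exponent $\delta/\clow-(n-2)$ is positive. For $n=2$ it equals $\delta/\clow>0$, and no curvature pinching is used. For $n\ge3$, Theorem~\ref{low_entrop} gives $\delta\ge n-1$, with equality only when the curvature is constantly $-1$, in which case $\clow=1$. Under the hypothesis $\clow\le\clowval$ we have $\clow(n-2)\le n-1\le\delta$, and if $\clow(n-2)=n-1$ then $\clow=\clowval>1$, so the curvature is not constant and $\delta>n-1$. Hence $\delta>\clow(n-2)$ in every case, and we may set $\alfa{\ref{rhc-hold}}\coloneqq\delta/\clow-(n-2)>0$, absorbing the remaining dependence into $c=c(r_0)$.

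The crux is the pointwise estimate $\mub{u,\rho}\lesssim\rho^{\delta/\clow}$, which I would prove by flowing backwards. Using $\Gamma$--equivariance of the Patterson--Sullivan construction we may assume $\pi(u)$ lies in a fixed compact fundamental domain. For $w\in\bss{u,\rho}$ the separation $d(\pi(u_{-s}),\pi(w_{-s}))$ for $s\ge0$ is controlled by a stable Jacobi field along $u_t$; under time reversal this becomes a Jacobi field bounded in the past and orthogonal to the flow, so Theorem~\ref{jacobi horor} applied to the reversed geodesic yields $d(\pi(u_{-s}),\pi(w_{-s}))\lesssim\rho\,e^{\clow s}$. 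Taking $s\coloneqq\tfrac1\clow\log(1/\rho)$, every endpoint $w^-$ of a vector $w\in\bss{u,\rho}$ lies in the shadow, seen from $\pi(u)$, of a ball of uniformly bounded radius about $\pi(u_{-s})$, which is at distance $\asymp s$ from $\pi(u)$. The elementary half of the Sullivan shadow lemma for the cocompact group $\Gamma$ bounds the $\mu_o$--mass of this shadow by $\lesssim e^{-\delta s}=\rho^{\delta/\clow}$; and since $d\muss_u(w)=e^{-\delta\beta_o(w^-,\pi(w))}\,d\mu_o(w^-)$ with the exponential factor $\asymp1$ on the bounded region at play, this gives $\mub{u,\rho}\lesssim\rho^{\delta/\clow}$ uniformly in $u$. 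The genuinely delicate points are the uniformity in $u$ of the backward--expansion bound — this is exactly where the lower curvature bound $-\clow^2$, i.e.\ Theorem~\ref{jacobi horor}, is indispensable — and of the comparison $d\muss_u\asymp d\mu_o$; both follow from cocompactness together with the curvature pinching. Everything else — the reduction via Proposition~\ref{rhc_cond}, the packing bound, and the arithmetic with $\delta$ and $\clow$ — is soft.
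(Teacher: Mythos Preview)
Your proof is correct and follows essentially the same strategy as the paper: verify item~(\ref{rhc1}) of Proposition~\ref{rhc_cond} by covering the $(n-2)$--dimensional sphere $\sss(v,r)$ with $\bfO(\epsilon^{-(n-2)})$ pieces of diameter $\bfO(\epsilon)$, flow each piece (using the Jacobi--field bound of Theorem~\ref{jacobi horor} and the choice $e^{-\clow T}=\epsilon$) to a set of bounded diameter and hence bounded measure, and conclude via the scaling of $\muss$ that each piece has mass $\bfO(\epsilon^{\delta/\clow})$, yielding the exponent $\delta/\clow-(n-2)$ whose positivity is checked exactly as you do via Theorem~\ref{low_entrop}. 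The only cosmetic differences are that the paper passes to the strongly \emph{unstable} foliation and flows forward (avoiding your time--reversal step), and bounds the mass of a flowed piece directly from compactness of $M$ and the relation $\muu_v=e^{-\delta T}(\calG_T)_*^{-1}\muu_{v_T}$ rather than invoking the shadow lemma.
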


For the proof of this theorem, it is more natural to work with the strongly unstable foliation. For $v \in T^1 \mtilde$, define
$$\wuu(v) \coloneqq \{u \in T^1 \mtilde: u^- = v^-, \,\,\text{ and }\, \beta(v^-, \pi(v), \pi(u)) = 0\}.$$
For $r> 0$, $\duu_v, \suu(v, r), \buu(v, r), \muu_v$, and $\mubuu(v, r)$ can be defined as in the strongly stable case. The RHC (or RC) condition can also be written in terms of the strongly unstable foliation, and one can directly check that RHC (or RC) holds for the strongly stable foliation if and only if it holds for the strongly unstable foliation.

\begin{proof}
    By the above discussion, it is enough to verify item (\ref{rc1}) of Proposition \ref{rhc_cond} for the strongly unstable foliation. To this end, fix $r_0 > 0,\, 0 \leq r \leq r_0,\, v \in T^1 \mtilde$, and $\epsilon \ll 1$, and let $\varphi_i \from U_i \to V_i \subset \suu(v, r)$, for $i$ belonging to some finite set $I$, be charts on $\suu(v, r)$. Choose compact sets $K_i \subset U_i$ such that $\{\varphi_i(K_i)\}_{i \in I}$ covers $\suu(v, r)$, and for each $i \in I$, cover $K_i$ by $\bfO((\frac{1}{\epsilon})^{n -2})$ distinct cubes of the form 
    $$C \coloneqq [k_1 \epsilon, (k_1 + 1) \epsilon] \times \cdots \times [k_{n-2} \epsilon, (k_{n-2} + 1) \epsilon], \,\,\,\, \text{ where } k_j \in \ZZ \text{ for } 1 \leq j \leq n-2,$$ and denote the collection of these cubes by $\calK_i$.
    Let $\calA$ denote the collection of $\buu(\varphi_i(C), \epsilon)$ for $i \in I$ and $C \in \calK_i$.
    Then $\calA$ covers $\buu(\suu(v, r), \epsilon)$; $\#\calA = \bfO((\frac{1}{\epsilon})^{n -2})$, and $\diam^\text{uu}_v (A) = \bfO(\epsilon)$ for all $A \in \calA$, where $\diam^\text{uu}_v$ denotes the diameter with respect to the metric $\duu_v$.

    Let $\dhatuu_v$ denote the distance function on $\wuu(v)$ induced by restriction of the Riemannian structure of $\mtilde$ to its submanifold $\pi(\wuu(v))$. On $\buu(v, r_0 + 1)$, the metrics $\dhatuu_v$ and $\duu_v$ coincide upto a multiplicative constant that only depends on $r_0$. Hence, for every $A \in \calA$, we have $\diamhat^\text{uu}_v (A) = \bfO(\epsilon)$, where $\diamhat^\text{uu}_v $ denotes the diameter with respect to the metric $\dhatuu_v$.
    Let $T$ be such that $e^{-\clow T} = \epsilon$. Then by Theorem \ref{jacobi horor} we have $\diamhat^\text{uu}_v (\calG_T(A)) = \bfO(1)$, thus, the compactness of $M$ implies $\muu_{\calG_T(v)}(\calG_T(A)) = \bfO(1)$. Since the homeomorphism $u \mapsto u_T$ from $\wuu(v)$ to $\wuu(v_T)$ sends $\muu_v$ to $e^{-\delta T} \muu_{v_T}$, we have $\muu_v(A) = \bfO(e^{-\delta T})$. Since $A \in \calA$ was arbitrary, we obtain
    \begin{align} \label{muu bound}
        \muu_v(\buu(\suu (v, r), \epsilon)) \prec (\# \calA) e^{-\delta T} = \bfO(e^{(\clow(n -2) - \delta)T}).
    \end{align}
    If $n = 2$, then the above implies
    $$\muu_v(\buu(\suu (v, r), \epsilon)) = \bfO(\epsilon^\alpha)$$
    for $\alpha \coloneqq \frac{\delta}{\clow}$. This proves the theorem when $M$ is two-dimensional. If $n \geq 3$, by Theorem \ref{low_entrop}, either $M$ is of constant curvature $-1$, or otherwise $\delta > n -1$. In the former case the theorem follows directly, so we can assume we are in the latter case.
    If $K \leq \clowval$, then $\kappa \coloneqq \delta - \clow(n -2) > 0$, thus by (\ref{muu bound}) and the choice of $T$ we have 
    \begin{align*}
        \muu_v (\buu(\suu (v, r), \epsilon)) = \bfO(\epsilon^\alpha)
    \end{align*}
    for $\alpha \coloneqq \frac{\kappa}{\clow}$. Since the implicit constant in the above $\bfO$ can be chosen to only depend on $r_0$, the theorem is proved.
\end{proof}

\appendix

\section {A smoothing argument} \label{construct-chi}

\nextalfa{app_F_holder}
In this section we prove the smoothing lemma (Lemma \ref{smoothing}) which is used in the proof of Lemma \ref{phi-mixing}. We adopt the notation introduced in Section \ref{test-intro}, and assume that $D$ is either a point $x \in \mtilde$, or equal to $L_\gamma$, the axis an element $\id \neq \gamma \in \Gamma$. The function $F \from \partial^1 D \to \RR$ is assumed to be \alfa{\ref{app_F_holder}}--\holder{}, and if $D = L$ we assume that $F$ is $\gamma$--invariant.
Given $u \in T^1 \mtilde$, recall that we defined $p_D^F(u)$ to be $\calG^F(p_D(u))$. Let $\pss_F(u)$ be the unique element of $\wss{p_D^F(u)}$ such that $\calG_t(\pss_F(u)) = u$ for some $t$, and set $\tauss(u) \coloneqq t$, that is, $\tauss(u)$ is such that
\begin{align} \label{def-of-tauss}
    \calG_{\tauss(u)} (\pss_F u) = u.
\end{align}
Finally, define $\rss (u) \coloneqq \dss_{\pdf u} (\pdf u, \pss_F u) = d_{\mtilde} \big(\pi(p^F_D u), \pi(\pss_F u)\big)$.

\nextalfa{tauss holder}
\begin{lemma} \label{tauss is holder}
    There exists a constant $\alfa{\ref{tauss holder}}>0$ such that 
    $\tauss(u)$ and $\rss(u)$ are both $\alfa{\ref{tauss holder}}$--\holder{}.
\end{lemma}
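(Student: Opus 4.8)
The plan is to reduce the whole statement to the \holder{}-continuity of the map $p_D$ and then verify that in the two cases $D=x$ and $D=L$. The first step is to record the closed formulas
\[
    \tauss(u)=\beta\big(u^+,\,\pi(\pdf u),\,\pi(u)\big),\qquad \pss_F(u)=\calG_{-\tauss(u)}(u),\qquad \rss(u)=d_{\mtilde}\big(\pi(\pdf u),\,\pi(\pss_F(u))\big).
\]
The middle identity is just the inversion of $\calG_{\tauss(u)}(\pss_F(u))=u$, and the last is the definition of $\rss$. For the first: flowing preserves the forward endpoint, so $(\pss_F(u))^+=u^+=(\pdf u)^+$, whence the signed flow time from $\pss_F(u)$ to $u$ equals $\beta(u^+,\pi(\pss_F(u)),\pi(u))$, while $\pss_F(u)\in\wss{\pdf u}$ gives $\beta(u^+,\pi(\pdf u),\pi(\pss_F(u)))=0$; the cocycle identity for $\beta$ combines these into the stated formula. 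Granting the formulas, and using that $u\mapsto u^+$ is \holder{} (Corollary \ref{plus-holder}), $\pi$ is Lipschitz, $\beta$ is \holder{} (Lemma \ref{buseman lip-holder}), $(t,u)\mapsto\calG_t(u)$ is Lipschitz (Lemma \ref{gt-holder}), and compositions and combinations of \holder{}-continuous maps are \holder{}-continuous, it suffices to prove that $u\mapsto\pdf u$ is \holder{}: then $\tauss$ is \holder{}, hence so is $\pss_F=\calG_{-\tauss(\param)}(\param)$, and hence so is $\rss$. (When $D=L$ these functions are defined only on $\{u:u^+\notin\lbdry\}$, and ``\holder{}'' is understood on compact subsets of the domain throughout.)

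Since $\pdf=\calG^F\circ p_D$ with $\calG^F(v)=\calG_{F(v)}(v)$ \holder{} in $v$ (by Lemma \ref{gt-holder} and the hypothesis that $F$ is \holder{}), and $p_D(u)=P^1_D(u^+)$, it is enough to show that $P^1_D$ is \holder{} on compact sets as a map into $\partial^1 D$. When $D=x$ this is immediate from Lemma \ref{d-zero-infty}: that lemma makes $d_S$ \holder{}-equivalent to $(u,v)\mapsto\max\{d(\pi(u),\pi(v)),\,d_o(u^+,v^+)\}$, and restricting to $S(x)$, where the first term vanishes, gives $d_S(v,v')\prec_K d_o(v^+,v'^+)^{\alpha}$ on every compact $K\subset S(x)$; that is, the inverse $P^1_x$ of $v\mapsto v^+$ is \holder{}.

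The substantial case is $D=L$, which I would treat in two stages. First, I claim $P_L\from\bdry\setminus\lbdry\to L$ is Lipschitz on compact sets. Fix a compact $C\subset\bdry\setminus\lbdry$, replace $o$ by a point $o'\in L$ lying in the compact arc $P_L(C)$ (harmless since $d_o\asymp_C d_{o'}$ on $C$), and take $\zeta,\eta\in C$ with $d_{o'}(\zeta,\eta)=e^{-T}$, with $\zeta_N\in[o',\zeta)$ and $\eta_N\in[o',\eta)$ the points at distance $N$ from $o'$. Since $\zeta\notin\lbdry$, the angle at $o'$ between $[o',\zeta)$ and $L$ is bounded below uniformly over $\zeta\in C$, so a comparison argument gives $d(\zeta_N,L)\geq N-\bfO_C(1)$, while $d(\zeta_T,\eta_T)\asymp_C 1$ by definition of the visual distance. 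Applying Proposition \ref{proj-close} to the successive pairs $\zeta_N,\zeta_{N+1}$ gives $d(P_L\zeta_N,P_L\zeta_{N+1})\prec e^{-d(\zeta_N,L)}\prec_C e^{-N}$ for $N\geq T$; summing the tail (using $P_L\zeta=\lim_N P_L\zeta_N$) yields $d(P_L\zeta,P_L\zeta_T)\prec_C e^{-T}$, and likewise for $\eta$, whereas Proposition \ref{proj-close} applied to $\zeta_T,\eta_T$ gives $d(P_L\zeta_T,P_L\eta_T)\prec_C e^{-T}$. The triangle inequality then gives $d(P_L\zeta,P_L\eta)\prec_C e^{-T}=d_{o'}(\zeta,\eta)$.

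Second, I would upgrade $P_L$ to $P^1_L$ using Lemma \ref{d-zero-infty} again. Given $\zeta,\zeta'\in C$, put $v=P^1_L\zeta$, $v'=P^1_L\zeta'$, and let $v''$ be the unit vector based at $\pi(v')=P_L\zeta'$ pointing at $\zeta$; then $v^+=v''^+=\zeta$ and $\pi(v'')=\pi(v')$, so Lemma \ref{d-zero-infty} bounds $d_S(v,v'')$ by a power of $d(P_L\zeta,P_L\zeta')$ and $d_S(v'',v')$ by a power of $d_o(\zeta,\zeta')$; combining with the Lipschitz bound on $P_L$ from the previous stage gives $d_S(v,v')\prec_C d_o(\zeta,\zeta')^{\alpha}$. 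Feeding this back through the reductions of the first two paragraphs proves the lemma, with $\alfa{\ref{tauss holder}}$ taken to be the smallest exponent that appears. I expect the only genuine obstacle to be this $D=L$ case, and within it the two uniform geometric inputs — the lower bound $d(\zeta_N,L)\geq N-\bfO_C(1)$ and the summability of the projection increments over the noncompact family of rays $[o',\zeta)$, $\zeta\in C$; these are where negative curvature (via Proposition \ref{proj-close}) does the real work, everything else being bookkeeping with \holder{} compositions.
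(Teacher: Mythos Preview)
Your proof is correct and follows exactly the paper's approach: derive the closed formula $\tauss(u)=\beta(u^+,\pi(\pdf u),\pi(u))$ (the paper writes this as $\beta_o(u^+,\pi(p_D^F u))-\beta_o(u^+,\pi(u))$, which is the same by the cocycle identity), then express $\rss$ via $\pss_F(u)=\calG_{-\tauss(u)}(u)$, and conclude that both are compositions of \holder{} maps. The paper's proof is terser---it simply cites ``the results in Section~\ref{hold_section}'' for the \holder{}-continuity of the ingredients---whereas you correctly supply the missing detail that $\zeta\mapsto P^1_L(\zeta)$ is \holder{} on compact subsets of $\bdry\setminus\lbdry$, a fact not explicitly proved in Section~\ref{hold_section}.
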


\begin{proof}
    Given $u \in T^1 \mtilde$, by the definition of $\pss_F(u)$ we have
    \begin{align*}
        \beta_o (u^+, \pi(p_D^F u)) = \beta_o(u^+, \pi(\pss_F u)).
    \end{align*}
    By the defining equation for $\tauss$ (Equation (\ref{def-of-tauss})) we have
    \begin{align*}
        \beta_o(u^+, \pi(\pss_F u)) = \beta_o(u^+, \pi(u)) + \tauss(u).
    \end{align*}
    As a result,
    \begin{align*}
        \tauss(u) = \beta_o (u^+, \pi(p_D^F(u))) - \beta_o(u^+, \pi(u)).
    \end{align*}
    Since the right hand side of the above equation is a combination of \holder{}-continuous functions by the results in Section \ref{hold_section}, $\tauss$ is \holder{}--continuous as well.
    Plugging $\pss(u) = \calG_{-\tauss(u)} (u)$ into the defining equation for $\rss$, we have
    \begin{align*}
        \rss (u) = d_{\mtilde} \big(\pi(p^F_D (u)), \pi(\calG_{-\tauss(u)} (u))\big).
    \end{align*}
    As before, all the functions appearing on the right hand side are \holder{}--continuous, hence $\rss$ is \holder{}--continuous as well.
\end{proof}

Let $(X, d)$ be a metric space. For $\epsilon > 0$ and a subset $A \subset X$, define the function $L_\epsilon (A) \from X \to \RR$ by
\begin{align*}
    L_\epsilon(A) (x) \coloneqq 1- \min \{ \frac{1}{\epsilon} d(x, A), 1 \}.
\end{align*}
Note that $L_\epsilon(A)$ is $\frac{1}{\epsilon}$--Lipschitz and it is constantly equal to $1$ on $A$. Denoting the $r$--neighbourhood of a set $S \subset X$ by $\calB_r(S)$, one can readily check that $\sprt(L_\epsilon(A)) = \overline{\calB}_\epsilon (A)$.

For $r, \tau > 0$ set
\begin{align} \label{B-F in coordinates}
    \bfb_F (r, \tau) \coloneqq \{u \in T^1 \mtilde: \rss(u) \leq r , \text{ and }  |\tauss(u)| \leq \tau
      \},
\end{align}
and note that this set coincides with $\thickbs{\tau}{ \calG^F(\partial^1 D), r}$, defined in Section \ref{test-intro}.

\nextalfa{thick-alpha}
\nextconst{thick-const}
\begin{lemma} \label{neigh. of box}
    There are constants $\const{\ref{thick-const}}, \alfa{\ref{thick-alpha}} >0$ such that if $r$, $\tau$, and $\epsilon$ are all of order $1$, then we have
    \begin{align*}
        \calB_\epsilon \big(  \mathbf{B}_F (r, \tau) \big) \subset
        \mathbf{B}_F ( \rtaupm{+} ).
    \end{align*}
\end{lemma}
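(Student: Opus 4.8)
The plan is to derive the lemma directly from the local \holder{}-continuity of $\rss$ and $\tauss$ (Lemma \ref{tauss is holder}), after first confining everything to a single compact region of $T^1\mtilde$ with the help of the $\gmcycl$--action. The point to keep in mind throughout is that $\tauss$ and $\rss$ are only \holder{} on compact sets, so one cannot apply Lemma \ref{tauss is holder} to the (non-compact) set $\mathbf{B}_F(r,\tau)$ and its neighbourhood as they stand.

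First I would check that, when $D = L$, both $\tauss$ and $\rss$ are $\gmcycl$--invariant. Since $\gamma$ preserves $L$ and $F$ is $\gamma$--invariant, we get $p_D(\gamma u) = \gamma\, p_D(u)$ and hence $p_D^F(\gamma u) = \gamma\, p_D^F(u)$ and $\pss_F(\gamma u) = \gamma\, \pss_F(u)$; the defining relation (\ref{def-of-tauss}) and the definition of $\rss$ then give $\tauss(\gamma u) = \tauss(u)$ and $\rss(\gamma u) = \rss(u)$, because $\gamma$ acts isometrically on $\mtilde$ and on $(T^1\mtilde, d_S)$. Next I would show that $\mathbf{B}_F(r,\tau)$ is relatively compact modulo $\gmcycl$ once $r,\tau$ are bounded: writing $u = \calG_{\tauss(u)}(\pss_F(u))$ for $u \in \mathbf{B}_F(r,\tau)$ and applying the triangle inequality along $\pi(u),\,\pi(\pss_F u),\,\pi(p_D^F u),\,\pi(p_D u)$ yields
\begin{align*}
    d_{\mtilde}\big(\pi(u), \pi(p_D u)\big) \leq |\tauss(u)| + \rss(u) + \norm{F}_\infty \leq \tau + r + \norm{F}_\infty,
\end{align*}
where $\norm{F}_\infty < \infty$ because $F$ is continuous (and, in the line case, $\gamma$--invariant, so it descends to a continuous function on a compact base). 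Since $\pi(p_D u) = P_L(u^+) \in L$, after replacing $u$ by $\gamma^k u$ for a suitable $k$ (no translation in the point case $D = x$, where $\pi(p_D u) = x$ directly) we may assume $P_L(u^+) \in [y_0, \gamma. y_0)$, so $\pi(u)$ lies in a fixed ball $\overline{\calB}_{r_0'}(o)$ with $r_0'$ depending only on the assumed bounds on $r,\tau$ and on $\gamma$.

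Then I would finish as follows. Put $K \coloneqq \pi^{-1}\big(\overline{\calB}_{r_0'+1}(o)\big)$, which is compact because $\pi$ is proper, and which, since $\pi$ is \lip{1} for $d_S$, also contains the closed $d_S$--ball of radius $1$ around every vector whose basepoint lies in $\overline{\calB}_{r_0'}(o)$. Let $c_K$ be a common \holder{} constant for $\tauss$ and $\rss$ on $K$ with exponent $\alfa{\ref{tauss holder}}$, supplied by Lemma \ref{tauss is holder}. Given $u \in \calB_\epsilon(\mathbf{B}_F(r,\tau))$ with $\epsilon \leq 1$, pick $u' \in \mathbf{B}_F(r,\tau)$ with $d_S(u,u') \leq \epsilon$ and $k \in \ZZ$ (with $k = 0$ when $D = x$) so that $\gamma^k u' \in K$; then $\gamma^k u \in K$ as well. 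Using the $\gmcycl$--invariance from the previous step, the \holder{} bound on $K$, and the fact that $\gamma^k$ is a $d_S$--isometry,
\begin{align*}
    \rss(u) = \rss(\gamma^k u) \leq \rss(\gamma^k u') + c_K\, d_S(u,u')^{\alfa{\ref{tauss holder}}} \leq r + c_K\, \epsilon^{\alfa{\ref{tauss holder}}},
\end{align*}
and in the same way $|\tauss(u)| \leq \tau + c_K\, \epsilon^{\alfa{\ref{tauss holder}}}$, so $u \in \mathbf{B}_F\big(r + c_K\epsilon^{\alfa{\ref{tauss holder}}},\, \tau + c_K\epsilon^{\alfa{\ref{tauss holder}}}\big)$. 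This gives the lemma with $\const{\ref{thick-const}} \coloneqq c_K$ and $\alfa{\ref{thick-alpha}} \coloneqq \alfa{\ref{tauss holder}}$.

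The main obstacle is exactly the non-compactness of $T^1\mtilde$ (and of $\partial^1 L$): everything else is a one-line application of Lemma \ref{tauss is holder}. The remedy above --- showing that $\mathbf{B}_F(r,\tau)$ together with its $\epsilon$--neighbourhood can be carried by $\gmcycl$ into one fixed compact set --- rests on the elementary bound $d_{\mtilde}(\pi(u), L) \leq \tau + r + \norm{F}_\infty$ for $u \in \mathbf{B}_F(r,\tau)$ and on the properness of the basepoint projection, both of which are straightforward.
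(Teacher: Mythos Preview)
Your proof is correct and follows the same approach as the paper's: both reduce the inclusion to the \holder{}-continuity of $\rss$ and $\tauss$ established in Lemma~\ref{tauss is holder}, and both take $\alfa{\ref{thick-alpha}} = \alfa{\ref{tauss holder}}$. The paper's proof is a three-line direct application of that lemma, whereas you add an explicit reduction to a fixed compact set via the $\gmcycl$--action (using $\gamma$--invariance of $\rss,\tauss$ and the bound $d_{\mtilde}(\pi(u),D)\le \tau+r+\norm{F}_\infty$); this extra step is justified, since the paper's definition of \holder{}-continuity is local on compact sets, and the paper's own proof silently relies on it.
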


\begin{proof}
    Let $v \in \calB_\epsilon \big(  \mathbf{B}_F (r, \tau) \big)$. This means that there exists $u \in \mathbf{B}_F (r, \tau)$ with $d(u, v) < \epsilon$. 
    Since $u \in \mathbf{B}_F (r, \tau)$, by (\ref{B-F in coordinates}) we have $\rss(u) \leq r$ and $|\tauss(u)| \leq \tau$.
    By Lemma \ref{tauss is holder}, for $\alfa{\ref{thick-alpha}} \coloneqq \alfa{\ref{tauss holder}}$ and $\const{\ref{thick-const}} = \bfO(1)$ we have
    \begin{align*}
        |\rss(u) - \rss(v)| \leq \const{\ref{thick-const}} d(u, v)^\alfa{\ref{thick-alpha}} \leq \const{\ref{thick-const}} \epsilon^\alfa{\ref{thick-alpha}} 
        \implies 
        \rss(v) \leq r + \const{\ref{thick-const}}\epsilon^\alfa{\ref{thick-alpha}}.
    \end{align*}
    In a similar way, $|\tauss(v)| \leq \tau + \const{\ref{thick-const}}\epsilon^\alfa{\ref{thick-alpha}}$.
    The lemma now follows from (\ref{B-F in coordinates}).
\end{proof}

For $r, \tau > 0$, define the function $\varphi_F(r, \tau) \from T^1 \mtilde \to \RR$ by
\begin{align*}
    \varphi_F(r, \tau) (u) \coloneqq \frac{1}{
        2\tau \mub{\pdf(u), r}
    },
\end{align*}
and recall from Section \ref{test-intro} that $\test{D}{F}{r, \tau} (u) = \varphi_F(r, \tau) (u) \times \mathbbm{1} \big( \mathbf{B}_F(r, \tau) \big)$. Let $n_1 \coloneqq \frac{2}{\alfa{\ref{thick-alpha}}}$, and for $\tau \ll r <1$ and $\epsilon < \tau^{n_1}$ define 
\begin{align*}
    \chi_F{(r, \tau, \epsilon)} \coloneqq \test{D}{F}{r, \tau} \times L_\epsilon \big(
        \mathbf{B}_F(
            r - \const{\ref{thick-const}} \epsilon^{\alfa{\ref{thick-alpha}}}, 
            \tau - \const{\ref{thick-const}} \epsilon^{\alfa{\ref{thick-alpha}}}
        )
    \big).
\end{align*}
If $D = L$, then $\chi_F(r, \tau, \epsilon)$ is $\gamma$--invariant. Hence, using (\ref{pushdown-point}) or (\ref{pushdown-line}), depending on whether $D$ is a point or $D = L$, $\chi_F(r, \tau, \epsilon)$ descends to a function on $T^1 M$, which we denote by $\barchi_F(r, \tau, \epsilon)$.

\begin{lemma} {\bf (the smoothing lemma)} \label{smoothing}
    There are constants $\alfa{\ref{chi-holder}}$ and $\alfa{\ref{chiplus-alpha}}$, only depending on the \holder{} exponent of $F$, such that for all $\tau \ll r < 1$ and $\epsilon < \tau^{n_1}$, we have
    \begin{enumerate}
        \item $0 \leq \barchi_F(r, \tau, \epsilon) \leq \bartest{D}{F}{r, \tau}$.
        \item $\barchi_F(r, \tau, \epsilon)$ is \hold{\alfa{\ref{chi-holder}}} with $\alfa{\ref{chi-holder}}$--norm of order 
        $\frac{1}{\tau \epsilon}$.
        \item \label{lone_item} $\norm{\bartest{D}{F}{r, \tau} - \barchi_F(r, \tau, \epsilon)}_{L^1} =
        \bfO(\frac{
            \epsilon^{\alfa{\ref{chiplus-alpha}}}
        }{\tau} )$, where $\norm{\param}_{L^1}$ denotes the $L^1$--norm.
    \end{enumerate}
\end{lemma}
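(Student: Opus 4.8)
The plan is to first observe that, on the support of the Lipschitz bump, the characteristic factor in $\test{D}{F}{r,\tau}$ is redundant, so that $\chi_F(r,\tau,\epsilon)$ is a genuine product of a \holder{} function and a Lipschitz function, and then to verify the three items in turn. Set $r_\epsilon\coloneqq r-\const{\ref{thick-const}}\epsilon^{\alfa{\ref{thick-alpha}}}$ and $\tau_\epsilon\coloneqq\tau-\const{\ref{thick-const}}\epsilon^{\alfa{\ref{thick-alpha}}}$, so $\chi_F(r,\tau,\epsilon)=\test{D}{F}{r,\tau}\times L_\epsilon\big(\bfb_F(r_\epsilon,\tau_\epsilon)\big)$. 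Since $\epsilon<\tau^{n_1}=\tau^{2/\alfa{\ref{thick-alpha}}}$ we get $\const{\ref{thick-const}}\epsilon^{\alfa{\ref{thick-alpha}}}<\const{\ref{thick-const}}\tau^{2}\ll\tau\ll r$, so $r_\epsilon,\tau_\epsilon$ are positive and comparable to $r,\tau$ and all of $r_\epsilon,\tau_\epsilon,\epsilon$ are of order one; thus Lemma \ref{neigh. of box} applies and gives $\sprt\big(L_\epsilon(\bfb_F(r_\epsilon,\tau_\epsilon))\big)=\overline{\calB}_\epsilon\big(\bfb_F(r_\epsilon,\tau_\epsilon)\big)\subset\bfb_F\big(r_\epsilon+\const{\ref{thick-const}}\epsilon^{\alfa{\ref{thick-alpha}}},\,\tau_\epsilon+\const{\ref{thick-const}}\epsilon^{\alfa{\ref{thick-alpha}}}\big)=\bfb_F(r,\tau)$. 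Hence $\mathbbm{1}(\bfb_F(r,\tau))\equiv1$ on this support, and therefore
\begin{align*}
    \chi_F(r,\tau,\epsilon)=\varphi_F(r,\tau)\times L_\epsilon\big(\bfb_F(r_\epsilon,\tau_\epsilon)\big)\qquad\text{on all of }T^1\mtilde,
\end{align*}
where $\varphi_F(r,\tau)(u)=\tfrac{1}{2\tau\,\mub{\pdf(u),r}}$. Item $(1)$ is then immediate: $0\le L_\epsilon\le1$ and $\test{D}{F}{r,\tau}\ge0$ give $0\le\chi_F(r,\tau,\epsilon)\le\test{D}{F}{r,\tau}$ pointwise on $T^1\mtilde$, and summing over $\Pi^{-1}(\cdot)$ (over $\Pi^{-1}(\cdot)\cap\Delta_\gamma$ when $D=L$) gives $0\le\barchi_F(r,\tau,\epsilon)\le\bartest{D}{F}{r,\tau}$.

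For item $(2)$, I would check that $\varphi_F(r,\tau)$ is \holder{}-continuous with $\norm{\param}_\infty=\bfO(\tfrac1\tau)$ and \holder{}-seminorm $\bfO(\tfrac1\tau)$: the map $u\mapsto\pdf(u)=\calG^{F}\big(P^1_D(u^+)\big)$ is \holder{}-continuous by Corollary \ref{plus-holder}, Lemma \ref{gt-holder}, the \holder{}-continuity of $P^1_D$ and $F$, and the closure of \holder{} maps under composition; while $v\mapsto\mub{v,r}$ is \holder{}-continuous by Lemma \ref{mub-holder} (applicable since $M$ has the RHC property, Theorem \ref{radius-hold-cont}), continuous and strictly positive, hence bounded below by a positive constant on any compact set. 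In the point case $\pdf$ maps the compact set $\sprt\big(L_\epsilon(\bfb_F(r_\epsilon,\tau_\epsilon))\big)\subset\bfb_F(r,\tau)$ into the compact set $\calG^{F}(S(x))$, and in the line case the $\gamma$-invariance of $\chi_F(r,\tau,\epsilon)$ and of the Sasaki metric reduces the estimate to the relatively compact piece $\bfb_F(r,\tau)\cap\Delta_\gamma$, on which $\pdf$ lands in the compact set $\calG^{F}\big(\partial^1[y_0,\gamma.y_0)\big)$; so $\mub{\pdf(\cdot),r}$ is bounded below there. Since $L_\epsilon(\bfb_F(r_\epsilon,\tau_\epsilon))$ is $\tfrac1\epsilon$-Lipschitz, takes values in $[0,1]$ and vanishes outside that region, the product rule for \holder{} functions yields that $\chi_F(r,\tau,\epsilon)$ — and hence $\barchi_F(r,\tau,\epsilon)$, since over any compact subset of $T^1M$ only boundedly many sheets of $\Pi$ contribute — is $\alfa{\ref{chi-holder}}$-\holder{} with $\alfa{\ref{chi-holder}}$-norm $\bfO(\tfrac1{\tau\epsilon})$ (using $\epsilon<\tau$, so $\tfrac1\epsilon\ge\tfrac1\tau$), where $\alfa{\ref{chi-holder}}$ depends only on the \holder{} data.

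For item $(3)$, by item $(1)$ and the product formula $\bartest{D}{F}{r,\tau}-\barchi_F(r,\tau,\epsilon)\ge0$, and on $T^1\mtilde$
\begin{align*}
    \test{D}{F}{r,\tau}-\chi_F(r,\tau,\epsilon)=\varphi_F(r,\tau)\,\mathbbm{1}(\bfb_F(r,\tau))\,\Big(1-L_\epsilon\big(\bfb_F(r_\epsilon,\tau_\epsilon)\big)\Big)
\end{align*}
is supported on $\bfb_F(r,\tau)\setminus\bfb_F(r_\epsilon,\tau_\epsilon)$ and bounded there by $\bfO(\tfrac1\tau)$. Using $\int_{T^1M}\bar\varphi\,d\mbarBM=\int_{T^1\mtilde}\varphi\,d\mBM$ in the point case and $\int_{T^1M}\bar\psi_\gamma\,d\mbarBM=\int_{\Delta_\gamma}\psi\,d\mBM$ in the line case (see \eqref{int-pushdown}), it remains to bound $\mBM\big(\bfb_F(r,\tau)\setminus\bfb_F(r_\epsilon,\tau_\epsilon)\big)$, intersected with $\Delta_\gamma$ when $D=L$. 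For this I would reuse the disintegration from the proof of Proposition \ref{integral-of-test}: in the coordinates $(v,w',t)\in\partial^1 D\times\wss{\calG^{F}(v)}\times\RR$, $d\mBM=e^{\delta F(v)}\,d\muss_{\calG^{F}(v)}(w')\,dt\,d\sigma_D(v)$, and, under the identification $\bfb_F(r,\tau)=\{\,w'\in\bss{\calG^{F}(v),r},\ |t|\le\tau\,\}$ (and similarly for $r_\epsilon,\tau_\epsilon$), the set in question lies in the union of $\{\,r_\epsilon<\rss(u)\le r,\ |\tauss(u)|\le\tau\,\}$ and $\{\,\rss(u)\le r,\ \tau_\epsilon<|\tauss(u)|\le\tau\,\}$. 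The $\mBM$-measure of the first is at most $2\tau\int_{\partial^1 D}e^{\delta F(v)}\,\muss_{\calG^{F}(v)}\big(\bss{\calG^{F}(v),r}\setminus\bss{\calG^{F}(v),r_\epsilon}\big)\,d\sigma_D(v)=\bfO\big(\tau\,\epsilon^{\alfa{\ref{thick-alpha}}\alfa{\ref{rhc-hold}}}\big)$, using that $F$ is bounded, that $\sigma_D$ has finite mass on $S(x)$ (resp. on $\partial^1[y_0,\gamma.y_0)$), and Theorem \ref{radius-hold-cont} with $r_0=1$ applied to the radial increment $\const{\ref{thick-const}}\epsilon^{\alfa{\ref{thick-alpha}}}$; the $\mBM$-measure of the second is $\bfO(\epsilon^{\alfa{\ref{thick-alpha}}})$ since $\mub{\calG^{F}(v),r}=\bfO(1)$. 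Multiplying by $\bfO(\tfrac1\tau)$ and using $\tau<1$ gives $\norm{\bartest{D}{F}{r,\tau}-\barchi_F(r,\tau,\epsilon)}_{L^1}=\bfO\big(\epsilon^{\alfa{\ref{thick-alpha}}\alfa{\ref{rhc-hold}}}+\tfrac{\epsilon^{\alfa{\ref{thick-alpha}}}}{\tau}\big)=\bfO\big(\tfrac{\epsilon^{\alfa{\ref{chiplus-alpha}}}}{\tau}\big)$ for $\alfa{\ref{chiplus-alpha}}\coloneqq\min\{\alfa{\ref{thick-alpha}},\,\alfa{\ref{thick-alpha}}\alfa{\ref{rhc-hold}}\}$.

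The step I expect to be the main obstacle is item $(3)$: carrying out the disintegration of $\mBM$ on the annular region $\bfb_F(r,\tau)\setminus\bfb_F(r_\epsilon,\tau_\epsilon)$ with correct bookkeeping. It is essentially the computation of Proposition \ref{integral-of-test} with the ball $\times$ interval replaced by an annulus, but one must check that the factors $e^{\delta F(v)}$ and $\mub{\calG^{F}(v),r}$ stay uniformly bounded (they do, since $F$ is bounded and $\mub{\cdot,r}$ is bounded on the relevant compact set) and that the radial thin-boundary estimate coming from RHC (Theorem \ref{radius-hold-cont}) combines with the trivial longitudinal estimate and the $\tfrac1\tau$ prefactor to give the asserted power of $\epsilon$. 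The other point requiring care is the \holder{}-bookkeeping in item $(2)$, namely extracting a single exponent $\alfa{\ref{chi-holder}}$ and, in the line case, a \holder{} constant uniform in $v$ after descending to $T^1M$; but this is routine given Section \ref{hold_section}, Lemma \ref{mub-holder} and Theorem \ref{radius-hold-cont}.
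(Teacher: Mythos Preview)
Your proof of items (1) and (2) is essentially identical to the paper's: both observe, via Lemma \ref{neigh. of box}, that the support of $L_\epsilon\big(\bfb_F(r_\epsilon,\tau_\epsilon)\big)$ is contained in $\bfb_F(r,\tau)$, so that the characteristic factor drops and $\chi_F$ becomes a product of the \holder{} function $\varphi_F(r,\tau)$ (controlled via Lemma \ref{mub-holder}) with the $\tfrac{1}{\epsilon}$--Lipschitz bump.

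For item (3), however, you take a genuinely different route. The paper does not touch the annulus $\bfb_F(r,\tau)\setminus\bfb_F(r_\epsilon,\tau_\epsilon)$ at all. Instead it proves a pointwise lower bound (Lemma \ref{chiminus lowerbound}): $(1-\const{\ref{chiplus const}}\tfrac{\epsilon^{\alfa{\ref{chiplus-alpha}}}}{\tau})\,\test{D}{F}{r_\epsilon,\tau_\epsilon}\le\chi_F(r,\tau,\epsilon)$, obtained by comparing $\varphi_F(r,\tau)$ with $\varphi_F(r_\epsilon,\tau_\epsilon)$ on $\bfb_F(r_\epsilon,\tau_\epsilon)$. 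Integrating and invoking Proposition \ref{integral-of-test} twice --- once for $(r,\tau)$ and once for $(r_\epsilon,\tau_\epsilon)$, both integrals being exactly $\sigma(F)$ --- yields the $L^1$ bound immediately. Your approach bounds the $\mBM$--measure of the annulus directly, redoing the disintegration of Proposition \ref{integral-of-test} and splitting into a radial shell (handled by Theorem \ref{radius-hold-cont}) and a longitudinal slab (trivial). Both arguments use RHC at the same spot and produce the same exponent $\alfa{\ref{chiplus-alpha}}=\alfa{\ref{thick-alpha}}\alfa{\ref{rhc-hold}}$. The paper's route is shorter because it exploits that $\int\bartest{D}{F}{\cdot,\cdot}$ is independent of the parameters, converting a measure estimate into a comparison of normalizing constants; your route is more self-contained and makes the geometric content (thin shell plus thin slab) explicit.
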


The first item of the above lemma follows directly from the definition of $\barchi_F(r, \tau, \epsilon)$. The second item follows from Lemma \ref{chihodler-lem}, and the third item is proved at the end of this section.

\nextalfa{chi-holder}

\begin{lemma} \label{chihodler-lem}
    There exists $\alfa{\ref{chi-holder}}$ such that for all $\tau \ll r <1$ and $\epsilon < \tau^{n_1}$, 
    $\chi(r, \tau, \epsilon)$ is 
    $(\alfa{\ref{chi-holder}}, \bfO(\frac{1}{\tau \epsilon}))$--\holder{}.
\end{lemma}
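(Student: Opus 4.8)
The plan is to express $\chi(r,\tau,\epsilon)$ as a genuine product of two globally \holder{} functions whose \holder{} norms can be read off, and then to invoke the product inequality $\norm{fg}_{\alpha}\leq 2\norm{f}_{\alpha}\norm{g}_{\alpha}$. Write $\mathbf{B}'\coloneqq\mathbf{B}_F(r-\cekpa,\tau-\cekpa)$, so by definition $\chi(r,\tau,\epsilon)=\test{D}{F}{r,\tau}\times L_\epsilon(\mathbf{B}')=\varphi_F(r,\tau)\times\mathbbm{1}(\mathbf{B}_F(r,\tau))\times L_\epsilon(\mathbf{B}')$. The key observation is that $L_\epsilon(\mathbf{B}')$ vanishes off $\calB_\epsilon(\mathbf{B}')$, and Lemma \ref{neigh. of box} (applicable because $n_1=2/\alfa{\ref{thick-alpha}}$ forces $\cekpa$ to be small compared with $\tau$ once $\epsilon<\tau^{n_1}$) gives $\calB_\epsilon(\mathbf{B}')\subset\mathbf{B}_F(r,\tau)$; hence $\mathbbm{1}(\mathbf{B}_F(r,\tau))\equiv1$ wherever $L_\epsilon(\mathbf{B}')>0$. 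Consequently $\chi(r,\tau,\epsilon)$ agrees with $\varphi_F(r,\tau)\cdot L_\epsilon(\mathbf{B}')$ on a fixed open set $\mathcal{U}$ containing the support of $\chi(r,\tau,\epsilon)$: one may take $\mathcal{U}$, independently of $\tau$ and $\epsilon$ (since $r,\tau,\epsilon<1$), to be a neighbourhood of $\mathbf{B}_F(2,2)$ with $\overline{\mathcal{U}}$ contained in the domain of $\varphi_F(r,\tau)$ (which is $U_L$ when $D=L$) and with $\overline{\mathcal{U}}/\gmcycl$ relatively compact when $D=L$, $\overline{\mathcal{U}}$ relatively compact when $D=x$; and $\chi(r,\tau,\epsilon)\equiv0$ off $\mathcal{U}$.

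Next I would bound the \holder{} norm of $\varphi_F(r,\tau)(u)=\frac{1}{2\tau}\big(\mub{\pdf(u),r}\big)^{-1}$ on $\mathcal{U}$. The map $u\mapsto\pdf(u)=\calG^F(P^1_D(u^+))$ is \holder{}-continuous, being a composition of $u\mapsto u^+$ (Corollary \ref{plus-holder}), the projection $P^1_D$, and $\calG^F$ (\holder{} since $F$ is \holder{} and the flow is Lipschitz by Lemma \ref{gt-holder}). By Theorem \ref{radius-hold-cont} together with item (\ref{rhc3}) of Proposition \ref{rhc_cond}, $(v,r)\mapsto\mub{v,r}$ is \holder{}-continuous; moreover $\mub{v,r}=\muss_v(\bss{v,r})$ is strictly positive and $\Gamma$--invariant in $v$, so it descends to a positive continuous function on the compact manifold $T^1M$ and is therefore bounded below by a constant $c(r)>0$ uniformly in $v$. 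Hence $u\mapsto(\mub{\pdf(u),r})^{-1}$ is \holder{}-continuous, and to obtain a \emph{uniform} \holder{} bound over $\mathcal{U}$ I would use that when $D=L$ this function is $\gamma$--invariant (because $\pdf$ is $\gamma$--equivariant, using $\gamma$--invariance of $F$ and of $L$, while $\mub{\param,r}$ is $\Gamma$--invariant) and $\overline{\mathcal{U}}/\gmcycl$ is compact, while when $D=x$ the set $\overline{\mathcal{U}}$ is simply compact. Multiplying by the constant $\frac{1}{2\tau}$ shows $\varphi_F(r,\tau)$ is $(\alpha',\bfO(1/\tau))$--\holder{} on $\mathcal{U}$ for a suitable exponent $\alpha'$ depending only on the \holder{} exponent of $F$ and the geometry of $M$, with implied constant depending on $r$, on $x$ resp. $\gamma$, and on $F$, but not on $\tau$ or $\epsilon$. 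On the other hand $L_\epsilon(\mathbf{B}')$ is $\frac{1}{\epsilon}$--Lipschitz and $[0,1]$--valued, hence globally $(\alpha',\bfO(1/\epsilon))$--\holder{} (use the Lipschitz bound for $d\leq1$ and the sup bound for $d>1$).

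Finally, I would set $\alfa{\ref{chi-holder}}\coloneqq\min\{\alpha',1\}$ and conclude. On $\mathcal{U}$ the product inequality gives that $\chi(r,\tau,\epsilon)=\varphi_F(r,\tau)\cdot L_\epsilon(\mathbf{B}')$ is $(\alfa{\ref{chi-holder}},\bfO(\frac{1}{\tau\epsilon}))$--\holder{}. For arbitrary $u,u'\in T^1\mtilde$: if both lie outside the support of $\chi(r,\tau,\epsilon)$ the difference is $0$; if both lie in $\mathcal{U}$ the previous estimate applies; and if $u$ lies in the support while $u'\notin\mathcal{U}$, then $d(u,u')$ is bounded below by a fixed positive constant whereas $|\chi(r,\tau,\epsilon)(u)-\chi(r,\tau,\epsilon)(u')|=|\chi(r,\tau,\epsilon)(u)|\leq\sup_{\mathcal{U}}|\varphi_F(r,\tau)|=\bfO(1/\tau)$, which again yields the claimed \holder{} bound. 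I expect the main obstacle to be the bookkeeping of uniformity — in particular, checking that the lower bound $c(r)$ on $\mub{v,r}$ and all the \holder{} constants are independent of $\tau$ and $\epsilon$, and handling the non-compactness of the support when $D=L$ through $\gamma$--invariance — since the one genuinely substantive input, the \holder{}-continuity of $v\mapsto\mub{v,r}$, is precisely the content of Theorem \ref{radius-hold-cont} and is available here.
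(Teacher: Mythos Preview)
Your proposal is correct and follows essentially the same route as the paper: use Lemma~\ref{neigh. of box} to drop the indicator $\mathbbm{1}(\mathbf{B}_F(r,\tau))$ and rewrite $\chi_F(r,\tau,\epsilon)$ as $\varphi_F(r,\tau)\cdot L_\epsilon(\mathbf{B}')$, then bound the \holder{} norm of $\varphi_F(r,\tau)$ via the \holder{}-continuity of $(v,r)\mapsto\mub{v,r}$ (the paper cites Lemma~\ref{mub-holder} for this, which packages the same content you extract from Theorem~\ref{radius-hold-cont} and Proposition~\ref{rhc_cond}), combine with the $\frac{1}{\epsilon}$--Lipschitz bound on $L_\epsilon$, and multiply. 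Your treatment is in fact more careful than the paper's on the uniformity bookkeeping---the positive lower bound on $\mub{v,r}$, the $\gamma$--invariance needed when $D=L$, and the behaviour off the support---points the paper leaves implicit.
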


\begin{proof}
    By Lemma \ref{neigh. of box}, the support of $L_\epsilon\big(\mathbf{B}_F (\rtaupm{-})\big)$ is a subset of $\bfb_F (r, \tau)$, hence we have
    \begin{align} \label{chi in varphi}
        \chi_F{(r, \tau, \epsilon)} (u) = \varphi_F(r, \tau)(u) \times  L_\epsilon\big(\mathbf{B}_F (\rtaupm{-})\big).
    \end{align}
    The function $u \mapsto \mub{\pdf(u), r}$ is a combination of \holder{}-continuous functions (the fact that $\mub{\param, \param}$ is \holder{}-continuous is proved in Lemma \ref{mub-holder}), hence it is \hold{\alpha} for some constant $\alpha > 0$. This makes $\varphi_F(r, \tau)$ a \hold{(\alpha, \bfO(\frac{1}{\tau}))} function.
    As mentioned at the beginning of the section, $L_\epsilon(A)$ is \lip{\frac{1}{\epsilon}} for every subset $A$ of a general topological space $X$, hence $ L_\epsilon\big(\mathbf{B}_F (\rtaupm{-})\big)$ is \lip{\frac{1}{\epsilon}}. 
    It follows that $\chi_F{(r, \tau, \epsilon)}$ is $(\alfa{\ref{chi-holder}}, \bfO(\frac{1}{\tau \epsilon}))$--\holder{} for $\alfa{\ref{chi-holder}} \coloneqq \alpha$ .
\end{proof}

\begin{lemma} \label{mub-holder}
    The function 
    $\mub{u, r} \from T^1 \mtilde \times [0, \infty) \to \RR$ is \holder{}--continuous. 
\end{lemma}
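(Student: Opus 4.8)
The plan is to prove the quantitative statement that for every $R, r_0 > 0$ there is a constant $c = c(R, r_0)$ such that $(u, r) \mapsto \mub{u, r}$ is \hold{(\alpha, c)} on $\pi^{-1}(\calB_R(o)) \times [0, r_0]$, with $\alpha > 0$ depending only on the geometry of $M$; since every compact subset of $T^1 \mtilde \times [0, \infty)$ lies inside such a product, this gives the lemma. Writing
\begin{align*}
    |\mub{u,r} - \mub{u',r'}| \leq |\mub{u,r} - \mub{u,r'}| + |\mub{u,r'} - \mub{u',r'}|,
\end{align*}
the first term is controlled directly by Theorem \ref{radius-hold-cont} — which is exactly item (\ref{rhc2}) of Proposition \ref{rhc_cond}: $r \mapsto \mub{v,r}$ is \hold{(\alpha', c'(r_0))} on $[0, r_0]$ uniformly in the center $v$ — so the whole problem reduces to bounding $|\mub{u,r} - \mub{u',r}|$ for $u, u'$ close in the Sassaki metric. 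I would also record at the outset that $\mub{\cdot, \cdot}$ is bounded on such a product: for $v \in \bss{u, r}$ one has $|\beta_o(v^-, \pi(v))| \leq d(o, \pi(v)) \leq R + r_0$, hence $\mub{u, r} \leq e^{\delta(R + r_0)}\, \mu_o(\bdry)$, a bound depending only on $R$ and $r_0$ (here $\mu_o$ is finite, being a Patterson–Sullivan density).

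The key device is a \emph{stable holonomy map} $H^{ss}_{u, u'} \from \wss{u} \to \wss{u'}$ (the map referred to as $H^{\param\param}$ in the proof of Lemma \ref{j almost one}), defined, for $d_S(u, u')$ small, by sending $v$ to the unique vector of $\wss{u'}$ with the same backward endpoint: $H^{ss}_{u, u'}(v)^- = v^-$. Let $\Phi_w(\xi)$ denote the point of $\wss{w}$ lying on the geodesic $(\xi, w^+)$; explicitly $\Phi_w(\xi) = \calG_t(\nu)$, where $\nu$ is the unit tangent to $(\xi, w^+)$ at $P_{(\xi, w^+)}(o)$ and $t = \beta(w^+, \pi(\nu), \pi(w))$, so that $H^{ss}_{u,u'}(v) = \Phi_{u'}(v^-)$. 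By Lemma \ref{proj. is holder}, Corollary \ref{plus-holder}, Lemma \ref{buseman lip-holder} and Lemma \ref{gt-holder}, the map $(w, \xi) \mapsto \Phi_w(\xi)$ is \holder{}-continuous on compact subsets of $\{(w, \xi): \xi \neq w^+\}$, and $v \mapsto v^-$ is \holder{}-continuous (apply Corollary \ref{plus-holder} to $\iota(v)$, using that $\iota$ is a Sassaki isometry and $v^- = (\iota v)^+$); hence $H^{ss}_{u, u'}$ is \holder{} jointly in $(u, u', v)$. A short $\cat(-1)$ estimate — a geodesic between two points of $\bdry$ at visual distance $\approx \epsilon$ from $o$ stays at distance $\gtrsim \log(1/\epsilon)$ from $o$ — shows that once $d_S(u, u')$ is small enough (in terms of $R, r_0$), the backward endpoints $v^-$ of the vectors $v \in \bss{u, r_0}$ remain in a fixed compact subset of $\bdry$ bounded away from $u^+$ and $(u')^+$. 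On that region $H^{ss}_{u,u'}$ is a well-defined homeomorphism, and the \holder{} bounds yield
\begin{align} \label{key-hol-est}
    d\big(\pi(v),\, \pi(H^{ss}_{u, u'}(v))\big) \leq c_R\, d_S(u, u')^{\alpha_0} \qquad \text{for all } v \in \bss{u, r_0},
\end{align}
for some $\alpha_0 > 0$ and $c_R$ depending only on $R$, $r_0$ and the geometry.

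From \eqref{key-hol-est} the two properties of $H^{ss}_{u,u'}$ we need follow. Since $H^{ss}_{u,u'}$ preserves the backward-endpoint coordinate and $d\muss_u(v) = e^{-\delta \beta_o(v^-, \pi(v))} d\mu_o(v^-)$, the Radon–Nikodym density of $(H^{ss}_{u,u'})_* \muss_u$ with respect to $\muss_{u'}$ at the point $H^{ss}_{u,u'}(v)$ equals $e^{\delta \beta(v^-,\, \pi(H^{ss}_{u,u'}(v)),\, \pi(v))}$, whose exponent is at most $d(\pi(v), \pi(H^{ss}_{u,u'}(v))) \leq c_R\, d_S(u,u')^{\alpha_0}$; so this density is $1 + \bfO_R(d_S(u,u')^{\alpha_0})$. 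Also, by the triangle inequality and \eqref{key-hol-est}, $|\dss_{u'}(u', H^{ss}_{u,u'}(v)) - \dss_u(u, v)| \leq d(\pi(u), \pi(u')) + c_R\, d_S(u,u')^{\alpha_0} \leq c_R'\, d_S(u,u')^{\alpha_0}$, so with $\epsilon_0 \coloneqq c_R'\, d_S(u,u')^{\alpha_0}$ we get $H^{ss}_{u,u'}(\bss{u, r}) \subseteq \bss{u', r + \epsilon_0}$ for $r \leq r_0$. Combining these with the pushforward identity $\mub{u, r} = (H^{ss}_{u,u'})_* \muss_u(H^{ss}_{u,u'}(\bss{u, r}))$ and monotonicity of the measure gives
\begin{align*}
    \mub{u, r} \leq \big(1 + \bfO_R(d_S(u,u')^{\alpha_0})\big)\, \mub{u', r + \epsilon_0},
\end{align*}
and by Theorem \ref{radius-hold-cont} together with the bound $\mub{u', r} \leq C(R, r_0)$, the right-hand side equals $\mub{u', r} + \bfO_R(d_S(u,u')^{\alpha_0 \min\{1, \alpha'\}})$. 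Interchanging $u$ and $u'$ yields the reverse inequality, so $|\mub{u,r} - \mub{u',r}| = \bfO_R(d_S(u,u')^{\alpha_0 \min\{1, \alpha'\}})$, and together with the radius estimate this proves the claim, e.g. with $\alpha = \alpha_0 \min\{1, \alpha'\}$ after replacing $\alpha_0$ by $\min\{\alpha_0, 1\}$.

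The main obstacle is the displacement estimate \eqref{key-hol-est}: it amounts to controlling, uniformly over $v$ in a compact portion of $\wss{u}$, how both the geodesic $(v^-, u^+)$ and the horosphere defining $\wss{u}$ move as $u$ varies. This is precisely where the \holder{}-continuity results of Section \ref{hold_section} — for the projection to a geodesic joining two boundary points (Lemma \ref{proj. is holder}), the endpoint maps (Corollary \ref{plus-holder}), the Busemann cocycle (Lemma \ref{buseman lip-holder}) and the geodesic flow (Lemma \ref{gt-holder}) — do the real work; once they are assembled into \eqref{key-hol-est}, the remainder is routine bookkeeping with the change-of-variables density and the triangle inequality.
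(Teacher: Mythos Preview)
Your proof is correct and follows essentially the same route as the paper: split into the radius variable (handled by Theorem \ref{radius-hold-cont}) and the center variable, treat the latter via the stable holonomy $H^{uv}$ preserving backward endpoints, use the displacement estimate together with the Radon--Nikodym comparison of $\muss_u$ and $\muss_{u'}$, combine with RHC, and symmetrize. The only difference is one of exposition: the paper states the two properties of $H^{uv}$ and refers to Lemma~11 of \cite{PPCommonPerp} for their proof, whereas you derive the displacement estimate \eqref{key-hol-est} explicitly from the \holder{} lemmas of Section~\ref{hold_section}; also, the paper phrases the containment as $H^{uv}(\bss{u, r-\epsilon}) \subset \bss{v, r}$ while you use the equivalent form $H^{ss}_{u,u'}(\bss{u, r}) \subset \bss{u', r+\epsilon_0}$.
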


\begin{proof}
    Fixing $u\in T^1 \mtilde$, $\mub{u, \param}$ is \holder{}--continuous by Theorem \ref{radius-hold-cont}, hence we only need to show that $\mub{\param, r}$ is \holder{}--continuous for a fixed $r$. This is essentialy proved in Lemma 11 of \cite{PPCommonPerp}, thus we only give an outline of the proof.
    
    Fix $r$, and let $u,v \in T^1 \mtilde$ be close by. Define the function $H^{uv} \from \bss{u, r} \to \wss{v},\, u' \mapsto v'$ in a way that $(u')^{-} = (v')^{-}$. This function satisfies the following.
    \begin{enumerate}
        \item There exists $\alpha$ such that $d\big(\pi(u'), \pi(H^{uv} u')\big) = \bfO (d(u, v)^\alpha)$ for $u' \in \bss{u, r}$.
        \item  For $u' \in \bss{u, r}$,
        \begin{align*}
            d \muss_v(v') = \big(1 + \bfO(d(u, v)^\alpha)\big)\, d\muss_u(u'). 
        \end{align*}
    \end{enumerate}
    By the first item and triangle inequality, if we choose $\epsilon$ to be a large multiple of $d(u, v)^\alpha$, then $H^{uv}(\bss{u, r - \epsilon}) \subset \bss{v, r}$, thus
    \begin{align*}
        \mub{v, r} \geq
        \muss_v \big(H^{uv}(\bss{u, r - \epsilon})\big) = \big(1 + \bfO(d(u, v)^\alpha)\big)\, \mub{u, r - \epsilon} \tag{by item (2)}\\ 
        = \big(1 + \bfO(d(u, v)^{\alpha \alfa{\ref{rhc-hold}}})\big)\, \mub{u, r} \tag{by Theorem \ref{radius-hold-cont}}.
    \end{align*}
    Therefore, letting $\alpha' \coloneqq \alpha \alfa{\ref{rhc-hold}}$, there exists a constant $c$ such that
    \begin{align*}
        \mub{v, r} \geq \mub{u, r} - c d(u, v)^{\alpha'}.
    \end{align*}
    Changing the role of $u$ and $v$ in the above argument, we deduce that $\mub{\param, r}$ is $\alpha'$--\holder{}.
\end{proof}

\nextalfa{chiplus-alpha}
\nextconst{chiplus const}
\begin{lemma} \label{chiminus lowerbound}
    There are constants $\const{\ref{chiplus const}}, \alfa{\ref{chiplus-alpha}}$ such that for $\tau \ll r < 1$ and $\epsilon < \tau^{n_1}$ we have
    \begin{align} \label{chi- lowerbound eq.}
        (1 - \const{\ref{chiplus const}} \frac{\epsilon^\alfa{\ref{chiplus-alpha}}}{\tau})\,
        \test{D}{F}{r, \tau}
        \leq \chi_F ( \rtaupm{+}, \epsilon ).
    \end{align} 
\end{lemma}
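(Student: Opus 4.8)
The plan is to unwind the definitions, reduce the statement to a pointwise comparison of the two weights $\varphi_F$, and then control that comparison using the radius--H\"older estimate of Theorem~\ref{radius-hold-cont}. Write $\rho \coloneqq \const{\ref{thick-const}}\,\epsilon^{\alfa{\ref{thick-alpha}}}$, so the right-hand side of (\ref{chi- lowerbound eq.}) is $\chi_F(r+\rho,\tau+\rho,\epsilon)$. By the very definition of $\chi_F$ the two inner $-\rho$ shifts cancel the two $+\rho$ shifts, so
\begin{align*}
    \chi_F(r+\rho,\tau+\rho,\epsilon) = \test{D}{F}{r+\rho,\tau+\rho}\times L_\epsilon\big(\mathbf{B}_F(r,\tau)\big) = \varphi_F(r+\rho,\tau+\rho)\times\mathbbm{1}\big(\mathbf{B}_F(r+\rho,\tau+\rho)\big)\times L_\epsilon\big(\mathbf{B}_F(r,\tau)\big).
\end{align*}
Since $\varphi_F(r,\tau)>0$, the support of $\test{D}{F}{r,\tau}$ is exactly $\mathbf{B}_F(r,\tau)$, so (\ref{chi- lowerbound eq.}) is trivial outside $\mathbf{B}_F(r,\tau)$ (the right side is always $\geq 0$). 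On $\mathbf{B}_F(r,\tau)$ I would note that $r\leq r+\rho$ and $\tau\leq\tau+\rho$ give $\mathbf{B}_F(r,\tau)\subset\mathbf{B}_F(r+\rho,\tau+\rho)$ by (\ref{B-F in coordinates}), so the indicator equals $1$ there, and $L_\epsilon(\mathbf{B}_F(r,\tau))\equiv 1$ on $\mathbf{B}_F(r,\tau)$ by construction of $L_\epsilon$. Hence on $\mathbf{B}_F(r,\tau)$ the claim becomes the pointwise inequality $(1-\const{\ref{chiplus const}}\epsilon^{\alfa{\ref{chiplus-alpha}}}/\tau)\,\varphi_F(r,\tau)(u)\leq\varphi_F(r+\rho,\tau+\rho)(u)$; inserting $\varphi_F(r,\tau)(u)=1/(2\tau\,\mub{\pdf(u),r})$ and cross-multiplying (all quantities positive) this is equivalent to
\begin{align} \label{plan-key}
    \frac{(\tau+\rho)\,\mub{\pdf(u),r+\rho} - \tau\,\mub{\pdf(u),r}}{(\tau+\rho)\,\mub{\pdf(u),r+\rho}} \leq \const{\ref{chiplus const}}\,\frac{\epsilon^{\alfa{\ref{chiplus-alpha}}}}{\tau}.
\end{align}

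To prove (\ref{plan-key}) I would bound the numerator and lower-bound the denominator separately. Applying Theorem~\ref{radius-hold-cont} with $r_0\coloneqq 1+\const{\ref{thick-const}}$ (so $0\leq r,r+\rho\leq r_0$) yields $0\leq\mub{\pdf(u),r+\rho}-\mub{\pdf(u),r}\leq c(r_0)\,\rho^{\alfa{\ref{rhc-hold}}}$, the left inequality simply because $\bss{\pdf(u),r}\subseteq\bss{\pdf(u),r+\rho}$. Writing the numerator as $\tau\big(\mub{\pdf(u),r+\rho}-\mub{\pdf(u),r}\big)+\rho\,\mub{\pdf(u),r+\rho}$ and dividing by the denominator $(\tau+\rho)\,\mub{\pdf(u),r+\rho}\geq\tau\,\mub{\pdf(u),r+\rho}$, the left side of (\ref{plan-key}) is at most $c(r_0)\rho^{\alfa{\ref{rhc-hold}}}/\mub{\pdf(u),r+\rho}+\rho/\tau$. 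Here I need a uniform positive lower bound $\mub{v,r}\geq c_\ast(r)>0$ for all $v$: this holds because $v\mapsto\mub{v,r}$ is continuous (Lemma~\ref{mub-holder}) and $\Gamma$--invariant (the deck transformation $\gamma$ maps $\bss{v,r}$ isometrically onto $\bss{\gamma v,r}$, and $\gamma_\ast\muss_v=\muss_{\gamma v}$), hence descends to a continuous function on the compact manifold $T^1M$, and is strictly positive because $\muss_v$ has full support in $\wss v$ while $\bss{v,r}$ has nonempty interior. Using this together with $\rho=\const{\ref{thick-const}}\epsilon^{\alfa{\ref{thick-alpha}}}$ and $\mub{\pdf(u),r+\rho}\geq c_\ast(r)$, the left side of (\ref{plan-key}) is at most $\const{\ref{thick-const}}c(r_0)c_\ast(r)^{-1}\epsilon^{\alfa{\ref{thick-alpha}}\alfa{\ref{rhc-hold}}}+\const{\ref{thick-const}}\epsilon^{\alfa{\ref{thick-alpha}}}/\tau$, and since $\tau<1$ and $\epsilon<1$ I may set $\alfa{\ref{chiplus-alpha}}\coloneqq\alfa{\ref{thick-alpha}}\min\{1,\alfa{\ref{rhc-hold}}\}$ and $\const{\ref{chiplus const}}\coloneqq\const{\ref{thick-const}}c(r_0)c_\ast(r)^{-1}+\const{\ref{thick-const}}$, using $1/\tau\geq 1$ to promote the bare $\epsilon^{\alfa{\ref{chiplus-alpha}}}$ term to $\epsilon^{\alfa{\ref{chiplus-alpha}}}/\tau$. (Only $\alfa{\ref{chi-holder}}$ and $\alfa{\ref{chiplus-alpha}}$ need to depend solely on the H\"older exponent of $F$, and the chosen $\alfa{\ref{chiplus-alpha}}$ does; $\const{\ref{chiplus const}}$ is permitted to depend on $r$ and on $M$.)

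The only ingredient here that goes beyond routine bookkeeping is the uniform positive lower bound for $\mub{v,r}$ in $v$; everything else is a matter of matching the shift $\rho=\const{\ref{thick-const}}\epsilon^{\alfa{\ref{thick-alpha}}}$ to the constants of Lemma~\ref{neigh. of box} so that the cancellation inside the definition of $\chi_F$ occurs and the indicator of the enlarged box and the Lipschitz plateau $L_\epsilon(\mathbf{B}_F(r,\tau))$ are identically $1$ on $\mathbf{B}_F(r,\tau)$, and of keeping track of the direction of the inequalities while passing between $\varphi_F$ and its reciprocal. I do not expect any of these steps to present a genuine difficulty.
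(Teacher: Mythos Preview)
Your proof is correct and follows essentially the same approach as the paper: both reduce to the pointwise comparison $(1 - O(\cdot))\,\varphi_F(r,\tau) \leq \varphi_F(r+\rho,\tau+\rho)$ on $\mathbf{B}_F(r,\tau)$ and control the ratio using Theorem~\ref{radius-hold-cont} together with the trivial estimate $\tau'/\tau = 1 + O(\rho/\tau)$. You are in fact slightly more explicit than the paper in justifying the uniform positive lower bound $\mub{v,r}\geq c_\ast(r)$ via $\Gamma$--invariance and compactness of $T^1M$, which the paper uses implicitly when it writes $\mub{\pdf u, r'}/\mub{\pdf u, r} = 1 + O(\epsilon^{\alfa{\ref{rhc-hold}}\alfa{\ref{thick-alpha}}})$.
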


\begin{proof}
    Let $r' \coloneqq r + \const{\ref{thick-const}} \epsilon^\alfa{\ref{thick-alpha}}$ and $\tau' \coloneqq \tau + \const{\ref{thick-const}} \epsilon^\alfa{\ref{thick-alpha}}$.
    Since $\sprt(\test{D}{F}{r, \tau}) = \mathbf{B}_F(r, \tau)$ and $\chi(r', \tau', \epsilon)$ is equal to $\varphi_F(r', \tau')$ on $\bfb_F(r, \tau)$, 
    it is enough to show that for $u \in \bfb_F(r, \tau)$ we have
    \begin{align*}
        \big(1 - \bfO \big(
            \const{\ref{thick-const}}
            (\frac{\epsilon^\alfa{\ref{thick-alpha}}}{\tau} +
            \epsilon^{
                \alfa{\ref{rhc-hold}} \alfa{\ref{thick-alpha}}
            })
        \big) \big) \,
        \varphi_F(r, \tau) \leq \varphi_F(r', \tau'),
    \end{align*}
    where $\alfa{\ref{rhc-hold}}$ is the constant given by Theorem \ref{radius-hold-cont}. (then we take $\alfa{\ref{chiplus-alpha}} \coloneqq \alfa{\ref{rhc-hold}} \alfa{\ref{thick-alpha}}$ and $\const{\ref{chiplus const}} \coloneqq \bfO(\const{\ref{thick-const}})$.)
    This follows by writing the defining equation for $\varphi_F(\param, \param)$ on both sides of the above and using the  equalities
    \begin{align*}
        \frac{\tau'}{\tau} = 1 + \bfO(
            \const{\ref{thick-const}}  
            \frac{\epsilon^\alfa{\ref{thick-alpha}}}{\tau}   
        ),\,\,\,\,\, \text{ and } \,\,\,\,\,
        \frac{\mub{\pdf u, r'}}{\mub{\pdf, r}} = 1 + \bfO(
            \const{\ref{thick-const}} \epsilon^{
                \alfa{\ref{rhc-hold}} \alfa{\ref{thick-alpha}}
            }
        ).
    \end{align*}
\end{proof}

\begin{proof} [Proof of item (\ref{lone_item}) of Lemma \ref{smoothing} ]
    Let $r' \coloneqq r - \const{\ref{thick-const}} \epsilon^\alfa{\ref{thick-alpha}}$ and $\tau' \coloneqq \tau - \const{\ref{thick-const}} \epsilon^\alfa{\ref{thick-alpha}}$, and
    recall that by Proposition \ref{integral-of-test},
    \begin{align} \label{sigma-equals}
        \sigma(F) = \int_{T^1 M} \bartest{D}{F}{r, \tau} \,d\mbarBM = \int_{T^1 M} \bartest{D}{F}{r', \tau'} \,d\mbarBM,
    \end{align}
    where $\sigma(F) \coloneqq \sigma_\gamma(F)$ (resp. $\sigma(F)\coloneqq \sigma_x(F)$) when $D = L_\gamma$ (resp. $D = x$). By Lemma \ref{chiminus lowerbound}, 
    \begin{align*}
        ( 1 - \chilowerdelt )\, \bartest{D}{F}{r', \tau'} \leq \barchi_F(r, \tau, \epsilon).
    \end{align*}
    By integrating and using (\ref{sigma-equals}) we obtain
    \begin{align*}
        \sigma(F) - \int_{T^1 M} \barchi_F(r, \tau, \epsilon) = \bfO(\chilowerdelt).
    \end{align*}
    Since $ 0 \leq \barchi_F(r, \tau, \epsilon) \leq \bartest{D}{F}{r, \tau}$, we have
    \begin{align*}
        \norm{\bartest{D}{F}{r, \tau} - \barchi_F (r, \tau, \epsilon)}_{L^1} =
        \int_{T^1 M} \bartest{D}{F}{r, \tau} - \int_{T^1 M} \barchi_F (r, \tau, \epsilon) = 
        \sigma(F) - \int_{T^1 M} \barchi_F (r, \tau, \epsilon) =
        \bfO(\frac{
            \epsilon^{\alfa{\ref{chiplus-alpha}}}
        }{\tau} ).
    \end{align*}
\end{proof}


\bibliographystyle{alpha}
\bibliography{references}

\begin{thebibliography}{DKRH21}

\bibitem[BAPP19]{pulin-tree-book}
Anne Broise-Alamichel, Jouni Parkkonen, and Fr\'{e}d\'{e}ric Paulin.
\newblock {\em Equidistribution and counting under equilibrium states in
  negative curvature and trees}, volume 329 of {\em Progress in Mathematics}.
\newblock Birkh\"{a}user/Springer, Cham, [2019] \copyright 2019.
\newblock Applications to non-Archimedean Diophantine approximation, Appendix
  by J\'{e}r\^{o}me Buzzi.

\bibitem[Bea83]{beardon}
Alan~F. Beardon.
\newblock {\em The geometry of discrete groups}, volume~91 of {\em Graduate
  Texts in Mathematics}.
\newblock Springer-Verlag, New York, 1983.

\bibitem[BH99]{bridson-book}
Martin~R. Bridson and Andr\'{e} Haefliger.
\newblock {\em Metric spaces of non-positive curvature}, volume 319 of {\em
  Grundlehren der mathematischen Wissenschaften [Fundamental Principles of
  Mathematical Sciences]}.
\newblock Springer-Verlag, Berlin, 1999.

\bibitem[DKRH21]{Bernouli-mixing}
Dmitry Dolgopyat, Adam Kanigowski, and Federico Rodriguez-Hertz.
\newblock Exponential mixing implies bernoulli, 2021.

\bibitem[Dol98]{dolg-exp-mixing}
Dmitry Dolgopyat.
\newblock On decay of correlations in {A}nosov flows.
\newblock {\em Ann. of Math. (2)}, 147(2):357--390, 1998.

\bibitem[GLP13]{glp-exp-mixing}
P.~Giulietti, C.~Liverani, and M.~Pollicott.
\newblock Anosov flows and dynamical zeta functions.
\newblock {\em Ann. of Math. (2)}, 178(2):687--773, 2013.

\bibitem[Ham89]{hamdist}
Ursula Hamenst\"{a}dt.
\newblock A new description of the {B}owen-{M}argulis measure.
\newblock {\em Ergodic Theory Dynam. Systems}, 9(3):455--464, 1989.

\bibitem[HIH77]{horosphere_geom}
Ernst Heintze and Hans-Christoph Im~Hof.
\newblock Geometry of horospheres.
\newblock {\em J. Differential Geometry}, 12(4):481--491 (1978), 1977.

\bibitem[Hub56]{huber_56}
Heinz Huber.
\newblock \"{U}ber eine neue {K}lasse automorpher {F}unktionen und ein
  {G}itterpunktproblem in der hyperbolischen {E}bene. {I}.
\newblock {\em Comment. Math. Helv.}, 30:20--62 (1955), 1956.

\bibitem[Hub98]{huber_98}
Heinz Huber.
\newblock Ein {G}itterpunktproblem in der hyperbolischen {E}bene.
\newblock {\em J. Reine Angew. Math.}, 496:15--53, 1998.

\bibitem[Mar04]{margulis-thesis}
Grigoriy~A. Margulis.
\newblock {\em On some aspects of the theory of {A}nosov systems}.
\newblock Springer Monographs in Mathematics. Springer-Verlag, Berlin, 2004.
\newblock With a survey by Richard Sharp: Periodic orbits of hyperbolic flows,
  Translated from the Russian by Valentina Vladimirovna Szulikowska.

\bibitem[OS84]{sarnak_formula}
R.~Osserman and P.~Sarnak.
\newblock A new curvature invariant and entropy of geodesic flows.
\newblock {\em Invent. Math.}, 77(3):455--462, 1984.

\bibitem[Pol18]{pollicott_conj_prep}
Mark Pollicott.
\newblock Counting geodesic arcs in a fixed conjugacy class on negatively
  curved surfaces with boundary.
\newblock {\em Preprint}, 2018.

\bibitem[PP14]{PPSkin}
Jouni Parkkonen and Fr\'{e}d\'{e}ric Paulin.
\newblock Skinning measures in negative curvature and equidistribution of
  equidistant submanifolds.
\newblock {\em Ergodic Theory Dynam. Systems}, 34(4):1310--1342, 2014.

\bibitem[PP15]{poulin_conj}
Jouni Parkkonen and Fr\'{e}d\'{e}ric Paulin.
\newblock On the hyperbolic orbital counting problem in conjugacy classes.
\newblock {\em Math. Z.}, 279(3-4):1175--1196, 2015.

\bibitem[PP17]{PPCommonPerp}
Jouni Parkkonen and Fr\'{e}d\'{e}ric Paulin.
\newblock Counting common perpendicular arcs in negative curvature.
\newblock {\em Ergodic Theory Dynam. Systems}, 37(3):900--938, 2017.

\bibitem[PPS15]{pps-book}
Fr\'{e}d\'{e}ric Paulin, Mark Pollicott, and Barbara Schapira.
\newblock Equilibrium states in negative curvature.
\newblock {\em Ast\'{e}risque}, (373):viii+281, 2015.

\bibitem[PS98a]{pollicott_comparison}
Mark Pollicott and Richard Sharp.
\newblock Comparison theorems and orbit counting in hyperbolic geometry.
\newblock {\em Trans. Amer. Math. Soc.}, 350(2):473--499, 1998.

\bibitem[PS98b]{pollicott_error}
Mark Pollicott and Richard Sharp.
\newblock Exponential error terms for growth functions on negatively curved
  surfaces.
\newblock {\em Amer. J. Math.}, 120(5):1019--1042, 1998.

\bibitem[Rob00]{roblin-rational}
Thomas Roblin.
\newblock Sur l'ergodicit\'{e} rationnelle et les propri\'{e}t\'{e}s ergodiques
  du flot g\'{e}od\'{e}sique dans les vari\'{e}t\'{e}s hyperboliques.
\newblock {\em Ergodic Theory Dynam. Systems}, 20(6):1785--1819, 2000.

\bibitem[Rob03]{roblin-thesis}
Thomas Roblin.
\newblock Ergodicit\'{e} et \'{e}quidistribution en courbure n\'{e}gative.
\newblock {\em M\'{e}m. Soc. Math. Fr. (N.S.)}, (95):vi+96, 2003.

\bibitem[Rud82]{Rudolph}
Daniel~J. Rudolph.
\newblock Ergodic behaviour of {S}ullivan's geometric measure on a
  geometrically finite hyperbolic manifold.
\newblock {\em Ergodic Theory Dynam. Systems}, 2(3-4):491--512 (1983), 1982.

\end{thebibliography}


\end{document}